\numberwithin{equation}{section}
\numberwithin{figure}{section}
\theoremstyle{plain}
\newtheorem{thm}{\protect\theoremname}
\theoremstyle{plain}
\newtheorem{prop}[thm]{\protect\propositionname}
\theoremstyle{plain}
\newtheorem{lem}[thm]{\protect\lemmaname}
\theoremstyle{plain}
\newtheorem{cor}[thm]{\protect\corollaryname}
\theoremstyle{remark}
\newtheorem{rem}[thm]{\protect\remarkname}
\numberwithin{thm}{section}
\providecommand{\corollaryname}{Corollary}
\providecommand{\lemmaname}{Lemma}
\providecommand{\propositionname}{Proposition}
\providecommand{\remarkname}{Remark}
\providecommand{\theoremname}{Theorem}
\begin{document}
\title{On the support of free convolutions}
\author{Serban Belinschi, Hari Bercovici, and Ching-Wei Ho}
\address{Institut de Math\'ematiques de Toulouse: UMR5219, Universit� de Toulouse,
CNRS; UPS , F-31062 Toulouse, France}
\email{Serban.Belinschi@math.univ-toulouse.fr}
\address{Mathematics Department, Indiana University, Bloomington, IN 47405,
USA}
\email{bercovic@indiana.edu}
\address{Institute of Mathematics, Academia Sinica, Taipei 10617, Taiwan}
\email{chwho@gate.sinica.edu.tw}
\subjclass[2000]{Primary: 46L54 Secondary: 30D05}
\begin{abstract}
We extend to arbitrary measures results of Bao, Erd\"os, Schnelli,
Moreillon, and Ji on the connectedness of the supports of additive
convolutions of measures on $\mathbb{R}$ and of free multiplicative
convolutions of measures on $\mathbb{R}_{+}$. More precisely, the
convolution of two measures with connected supports also has connected
support. The result holds without any absolute continuity or bounded
support hypotheses on the measures being convolved. We also show that
the results of Moreillon and Schnelli concerning the number of components
of the support of a free additive convolution hold for arbitrary measures
with bounded supports. Finally, we provide an approach to the corresponding
results in the case of free multiplicative convolutions of probability
measures on the unit circle.
\end{abstract}

\maketitle

\section{Introduction\label{sec:Introduction}}

Suppose that $\mu_{1}$ and $\mu_{2}$ are Borel probability measures
on $\mathbb{R}$ whose supports ${\rm supp}(\mu_{1})$ and ${\rm supp}(\mu_{2})$
have $n_{1}$ and $n_{2}$ connected components, respectively, where
$n_{1},n_{2}<+\infty$. The question of whether the free convolution
$\mu=\mu_{1}\boxplus\mu_{2}$ is such that ${\rm supp}(\mu)$ has
a finite number $n$ of connected components, and possible estimates
of $n$, was investigated in two recent works \cite{bao-etc,mor-sch}.
These authors consider compactly supported and absolutely continuous
measures $\mu_{j}$ whose densities satisfy a Jacobi type condition.
More precisely, these densities are comparable to a function of the
type
\[
\rho(t)=(t-a)^{\alpha}(b-t)^{\beta}
\]
on each component $[a,b]$ of the support, where $\alpha,\beta\in(-1,1)$.
Under these conditions, it is shown in \cite{mor-sch} (among other
results, including better estimates in some cases) that 
\[
n\le2n_{1}n_{2}-1.
\]
In the special case $n_{1}=n_{2}=1$, we obtain $n=1$, and this result
appears in the earlier work \cite{bao-etc}.

The authors of \cite{mor-sch} conjecture that this estimate is still
true for other Jacobi type densities, for instance when $\alpha,\beta\in(-1,\infty)$.
Using a simple approximation argument, we show that the inequality
$n\le2n_{1}n_{2}-1$ remains true if $\mu_{1}$ and $\mu_{2}$ are
only assumed to have compact supports. This suggests that a direct,
and possibly simpler, argument may exist, not using Jacobi type measures.
We develop here a few basic facts that would be useful for such an
argument, and we also extend the case in which $n_{1}=n_{2}=1$ to
measures whose supports are not necessarily compact. We hope that
these observations will be useful in eventually finding better upper
bounds for $n$ when $n_{1},n_{2}>1$. (Note that the inequality $n\le n_{1}n_{2}$
holds for classical convolutions, and equality is possible.)

The work of \cite{bao-etc} was extended by Ji \cite{Ji} to free
multiplicative convolutions of probability measures on $[0,+\infty)$.
In particular, suppose that $\mu_{1}$ and $\mu_{2}$ are two such
measures that have compact, connected support, and satisfy a Jacobi
condition. Then it is shown in \cite{Ji} that $\mu_{1}\boxtimes\mu_{2}$
has connected support. We extend this result to arbitrary measures
$\mu_{1},\mu_{2}$ that have connected, but not necessarily bounded,
support. Finally, we describe the analytic framework that might allow
for proving analogous results for free multiplicative convolutions
of measures on the unit circle.

The paper is organized as follows. In Section \ref{sec:An-approximation-argument}
we argue, using elementary spectral theory, that the estimate $n<2n_{1}n_{2}$
proved in \cite{mor-sch} does in fact hold for arbitrary measures
with bounded support. The same argument can be used to extend the
connectedness result of \cite{Ji} to arbitrary measures with bounded
support in $[0,+\infty)$. The argument is essentially the same as
in the additive case and the details are not included. In Section
\ref{sec:prelim} we review a result due to Lehner \cite{lehner}.
This result is used subsequently to identify the boundary points of
the support of a free convolution. Our approach to the identification
of the support of free convolutions is presented in Sections \ref{sec:additive},
\ref{sec:mult-on-R}, and \ref{sec:multiplicative-T} for free additive
convolution, free multiplicative convolution on $\mathbb{R}_{+}$,
and free multiplicative convolution on the unit circle $\mathbb{T}$,
respectively. Naturally, there is common ground between our arguments
and earlier work on the subject, though we avoid some of the technicalities
that are necessary when considering Jacobi type measures.

\section{An approximation argument\label{sec:An-approximation-argument}}

The following result follows from the upper continuity of the spectrum
combined with a result of Newburgh \cite{newb} (see also \cite{aupetit}).
\begin{prop}
\label{prop:number of components for nearby elements}Suppose that
$\mathcal{A}$ is a Banach algebra and $x\in\mathcal{A}$ has the
property that the spectrum $\sigma(x)$ has a finite number $n$ of
connected components. There exists $\varepsilon>0$ such that every
$y\in\mathcal{A}$ that satisfies $\|x-y\|<\varepsilon$ is such that
$\sigma(y)$ has at least $n$ connected components. 
\end{prop}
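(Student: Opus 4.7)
The plan is to use the holomorphic (Riesz) functional calculus to show that each connected component of $\sigma(x)$ persists under small perturbations of $x$. Write $\sigma(x)=K_{1}\sqcup\cdots\sqcup K_{n}$ as the disjoint union of its (nonempty, compact) connected components. Since these are pairwise disjoint compact subsets of $\mathbb{C}$, I can choose pairwise disjoint bounded open sets $U_{1},\ldots,U_{n}\subset\mathbb{C}$ with $K_{i}\subset U_{i}$, together with smooth positively oriented Jordan contours $\gamma_{i}\subset U_{i}\setminus K_{i}$, each surrounding $K_{i}$ once.

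Next, I would form the Riesz projections
\[
P_{i}=\frac{1}{2\pi i}\int_{\gamma_{i}}(z-x)^{-1}\,dz\in\mathcal{A},\qquad i=1,\ldots,n,
\]
which satisfy $P_{i}^{2}=P_{i}$, $P_{i}P_{j}=0$ for $i\neq j$, $\sum_{i}P_{i}=1$, and, crucially, $P_{i}\neq0$ because $K_{i}\neq\emptyset$. The standard resolvent estimate shows that for $y$ with $\|y-x\|$ small enough that each contour $\gamma_{i}$ lies in the resolvent set of $y$, the analogous integrals
\[
Q_{i}(y)=\frac{1}{2\pi i}\int_{\gamma_{i}}(z-y)^{-1}\,dz
\]
are well defined, and the map $y\mapsto Q_{i}(y)$ is norm-continuous at $x$ with $Q_{i}(x)=P_{i}$. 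Hence for $\|y-x\|$ sufficiently small each $Q_{i}(y)$ is nonzero, and this forces $\sigma(y)\cap U_{i}\neq\emptyset$; otherwise $(z-y)^{-1}$ would be holomorphic on a neighborhood of the closed region bounded by $\gamma_{i}$, and Cauchy's theorem would yield $Q_{i}(y)=0$.

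Finally, upper semicontinuity of the spectrum gives $\sigma(y)\subset U_{1}\cup\cdots\cup U_{n}$ for $y$ close enough to $x$. Combining this with the previous step, $\sigma(y)$ is contained in the disjoint union of the $U_{i}$ and meets each of them, so it decomposes as the disjoint union of the nonempty relatively clopen pieces $\sigma(y)\cap U_{i}$. This yields at least $n$ connected components of $\sigma(y)$, as required. I do not anticipate any serious obstacle; the one delicate point is to choose a single $\varepsilon>0$ small enough that three conditions hold simultaneously for all $y$ with $\|x-y\|<\varepsilon$, namely that $\bigcup_{i}\gamma_{i}$ lies in the resolvent set of $y$, that each $Q_{i}(y)$ is nonzero, and that $\sigma(y)\subset\bigcup_{i}U_{i}$. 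All three follow from the uniform continuity of the resolvent $(z-\,\cdot\,)^{-1}$ on the compact set $\bigcup_{i}\gamma_{i}$ together with upper semicontinuity of the spectrum.
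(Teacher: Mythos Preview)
Your plan is correct and, at the level of ideas, coincides with the paper's argument: both choose pairwise disjoint open sets $U_{1},\dots,U_{n}$ separating the components of $\sigma(x)$, invoke upper semicontinuity of the spectrum to get $\sigma(y)\subset\bigcup_{j}U_{j}$, and then show $\sigma(y)\cap U_{j}\ne\varnothing$ for each $j$. The only difference is in this last step: the paper simply cites Newburgh's theorem (via \cite{aupetit}), whereas you establish the needed fact directly with Riesz projections---which is essentially how that portion of Newburgh's result is proved anyway. So your version is a self-contained unpacking of the paper's citation rather than a genuinely different route; the payoff is that your argument is elementary and does not rely on an external reference.

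One small technical caveat: a single Jordan contour around $K_{i}$ need not suffice in general, because some other component $K_{j}$ may lie in a bounded complementary component of $K_{i}$ (for instance, $K_{1}$ a circle and $K_{2}$ a point it encloses). In that situation every Jordan curve surrounding $K_{1}$ also surrounds $K_{2}$, and the $U_{i}$ cannot all be taken simply connected. The standard remedy is to replace each $\gamma_{i}$ by a cycle in $\mathbb{C}\setminus\sigma(x)$ having winding number $1$ about every point of $K_{i}$ and $0$ about every point of $\sigma(x)\setminus K_{i}$; with this adjustment the rest of your argument goes through verbatim.
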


Indeed, suppose that $U_{1},\dots,U_{n}$ are pairwise disjoint connected
open sets in $\mathbb{C}$ such that $U=\bigcup_{j=1}^{n}U_{j}$ contains
$\sigma(x)$ and each $U_{j}$ contains a connected component of $\sigma(x)$.
Then, for small $\varepsilon,$ the inequality $\|x-y\|<\varepsilon$
implies that $\sigma(y)\subset U$, while Newburgh's theorem \cite[Theorem 3.4.4]{aupetit}
implies that $\sigma(y)\cap U_{j}\ne\varnothing$, $j=1,\dots,n,$
for sufficiently small $\varepsilon$.

For the following observation, we use the space $L^{\infty}$ corresponding
to the interval $(0,1)$ endowed with the usual Lebesgue measure.
(The square root in (3) can be replaced by any exponent in $(-1,+\infty)$.)
\begin{lem}
\label{lem:Jacobi approximation} Suppose that $\mu$ is a Borel probability
measure with compact support and that ${\rm supp}(\mu)$ has a finite
number $n$ of connected components. Then, for every $\varepsilon>0$,
there exist $x,y\in L^{\infty}$ such that
\begin{enumerate}
\item $\|x-y\|_{\infty}<\varepsilon,$ 
\item the distribution of x is $\mu$,
\item the distribution $\nu$ of $y$ is supported on a union of $n$ disjoint
intervals $[a_{j},b_{j}]$, $j=1,\dots,n$, it is absolutely continuous,
and its density satisfies
\[
c_{j}\sqrt{(t-a_{j})(b_{j}-t)}\le\rho(t)\le C_{j}\sqrt{(t-a_{j})(b_{j}-t)},\quad t\in[a_{j},b_{j}],
\]
for some $c_{j},C_{j}>0$.
\end{enumerate}
\end{lem}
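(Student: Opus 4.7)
The plan is to reduce (1)--(3) to an approximation of measures in the $L^\infty$-Wasserstein sense. Setting $x:=F_\mu^{-1}$, the right-continuous inverse of the CDF of $\mu$, realizes $\mu$ as an element of $L^\infty(0,1)$, so (2) is automatic; for any probability measure $\nu$ one similarly gets $y:=F_\nu^{-1}$, and $\|x-y\|_\infty=\sup_{u\in(0,1)}|F_\mu^{-1}(u)-F_\nu^{-1}(u)|$ equals the $L^\infty$-Wasserstein distance $W_\infty(\mu,\nu)$. So (1)--(3) follow once we construct an absolutely continuous $\nu$ satisfying (3) with $W_\infty(\mu,\nu)<\varepsilon$.

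Let $K_j=[\alpha_j,\beta_j]$, $j=1,\dots,n$, be the components of $\mathrm{supp}(\mu)$ and $p_j=\mu(K_j)>0$. Fix $\delta\in(0,\varepsilon/3)$ smaller than half the minimum gap between consecutive components, and set $[a_j,b_j]:=[\alpha_j-\delta,\beta_j+\delta]$, which are pairwise disjoint. I would build $\nu=\sum_{j=1}^{n}\nu_j$ component-by-component, with $\nu_j$ supported on $[a_j,b_j]$ of total mass $p_j$ and Jacobi-type density, and $W_\infty(\mu|_{K_j},\nu_j)\le 3\delta$. Because the $[a_j,b_j]$ are well separated, the componentwise monotone couplings paste into a global one and $W_\infty(\mu,\nu)\le 3\delta<\varepsilon$.

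The construction of $\nu_j$ splits into two cases. If $\beta_j-\alpha_j\le\delta$ (in particular for a single-point component), let $\nu_j$ have density $C_j\sqrt{(t-a_j)(b_j-t)}$ on $[a_j,b_j]$, normalized to mass $p_j$; both $\mu|_{K_j}$ and $\nu_j$ sit in an interval of length at most $3\delta$, so the $W_\infty$ bound is automatic and (3) holds by construction. If $\beta_j-\alpha_j>\delta$, first partition $K_j$ into subintervals of length at most $\delta$ and form $\hat\mu_j$ by replacing the restriction of $\mu$ to each subinterval by the uniform measure of the same mass; since $K_j\subseteq\mathrm{supp}(\mu)$ every such subinterval has positive $\mu$-mass, so $\hat\mu_j$ is absolutely continuous with density $\hat\rho_j\in[m_j,M_j]\subset(0,\infty)$ on $K_j$, and $W_\infty(\mu|_{K_j},\hat\mu_j)\le\delta$. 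Then set $\nu_j:=\hat\mu_j*K_\delta$, where $K_\delta(s):=\pi^{-1}(\delta^2-s^2)^{-1/2}\mathbf{1}_{[-\delta,\delta]}(s)$ is the arcsine kernel; this is supported on $[a_j,b_j]$, preserves mass, and satisfies $W_\infty(\hat\mu_j,\nu_j)\le\delta$.

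The main obstacle is to verify the two-sided Jacobi bound on the density $\rho_j$ of $\nu_j$ in the second case. In the interior $[\alpha_j+\delta,\beta_j-\delta]$ the full arcsine kernel is integrated against $\hat\rho_j\in[m_j,M_j]$, yielding $\rho_j(t)\in[m_j,M_j]$ while $\sqrt{(t-a_j)(b_j-t)}\in[\delta,(b_j-a_j)/2]$, so the ratio lies in a fixed positive range. Near $t=a_j$, writing $t=a_j+u$ with $u\in[0,2\delta]$ and substituting $y=\alpha_j+(u-v)$ transforms the convolution into
\[
\rho_j(a_j+u)=\pi^{-1}\int_0^{u}\hat\rho_j(\alpha_j+u-v)\bigl(2\delta v-v^{2}\bigr)^{-1/2}\,dv\asymp\delta^{-1/2}\sqrt{u},
\]
which is comparable to $\sqrt{(t-a_j)(b_j-t)}\asymp\sqrt{(b_j-a_j)u}$ on this boundary layer; the behavior near $b_j$ is symmetric. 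Choosing the arcsine kernel rather than, say, the semicircle, is precisely what produces the $\sqrt{u}$ boundary decay demanded by (3).
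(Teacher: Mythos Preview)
Your argument is correct and takes a genuinely different route from the paper. Both proofs start the same way, realizing $x$ and $y$ as quantile functions $F_\mu^{-1},F_\nu^{-1}$ so that (1) becomes a $W_\infty$-bound, but the constructions of $\nu$ diverge. The paper builds the \emph{distribution function} $G$ of $\nu$ directly: it chooses disjoint intervals $[a_j,b_j]$ containing the components of $\mathrm{supp}(\mu)$, prescribes $G(t)-G(a_j)=(t-a_j)^{3/2}$ and $G(b_j)-G(t)=(b_j-t)^{3/2}$ near the endpoints, makes $G$ strictly increasing in between, and forces $G(kh)=F(kh)$ on a lattice of mesh $h<\varepsilon$; the $L^\infty$-bound then follows from the lattice matching. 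You instead build the \emph{density}: a piecewise-uniform approximation $\hat\mu_j$ on subintervals of length $\le\delta$, then a convolution with the arcsine kernel $K_\delta$, with $W_\infty$ controlled by $\delta$ plus the kernel support. The paper's construction is quicker to state and immediately flexible in the Jacobi exponent (replace $3/2$ by any $1+\gamma$ with $\gamma>-1$), but the existence of such a $G$ is asserted rather than spelled out. Your construction is fully explicit and the two-sided density bound follows from a concrete integral estimate; the price is that the arcsine kernel is tuned to the square-root exponent, and other exponents would require the corresponding Beta kernel. One small point to tidy: your case split at $\beta_j-\alpha_j=\delta$ leaves the range $\beta_j-\alpha_j\in(\delta,2\delta)$ where the two boundary layers $[a_j,a_j+2\delta]$ and $[b_j-2\delta,b_j]$ overlap; moving the threshold to $2\delta$ (and taking $\delta<\varepsilon/4$) avoids this without affecting anything else.
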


\begin{proof}
We first construct the distribution $\nu$. Denote by
\[
F(t)=\mu((-\infty,t]),\quad t\in\mathbb{R},
\]
the distribution function of $\mu$. Choose pairwise disjoint intervals
$[a_{j},b_{j}]$, each one containing one of the connected components
of the support of $\mu$. Thus, $F$ is locally constant on $\mathbb{R}\backslash\bigcup_{j=1}^{n}[a_{j},b_{j}]$.
We construct $\nu$ such that the distribution function
\[
G(t)=\nu((-\infty,t]),\quad t\in\mathbb{R},
\]
equals $F(t)$ for $t\notin\bigcup_{j=1}^{n}[a_{j},b_{j}]$, is differentiable
on $\mathbb{R}$, strictly increasing on each $(a_{j},b_{j})$, $G(t)-G(a_{j})=(t-a_{j})^{3/2}$
for $t>a_{j}$ close to $a_{j}$, and $G(b_{j})-G(t)=(b_{j}-t)^{3/2}$
for $t<b_{j}$ close to $b_{j}$. We also require that there is some
$h\in(0,\varepsilon)$ such that $G(kh)=F(kh)$ for every $k\in\mathbb{Z}.$
The functions $x$ and $y$ are then defined essentially as the inverse
functions of $F$ and $G$ respectively, in the sense that
\[
x(s)=\inf\{t\in\mathbb{R}:F(t)>s\},\quad s\in(0,1),
\]
with an analogous definition for $y$. It is now easy to verify that
the functions $x,y\in L^{\infty}$ satisfy the requirements of the
lemma. 
\end{proof}
Note, incidentally, that $x$ depends solely on $\mu$, so increasingly
precise approximations $y$ can be constructed without changing $x$.
Therefore, using the free product $L^{\infty}\star L^{\infty}$ (relative
to the standard expected values on the two algebras) easily yields
the following result.
\begin{cor}
\label{cor:Jacoby approx for free var}Suppose that $\mu_{1}$ and
$\mu_{2}$ are two compactly supported measures on $\mathbb{R}$ such
that the supports ${\rm supp}(\mu_{1})$ and ${\rm supp}(\mu_{2})$
have $n_{1}$ and $n_{2}$ connected components, with $n_{1},n_{2}<+\infty$.
There exist families $\mathcal{F}_{j}=\{x_{j}(\varepsilon\}:\varepsilon>0\}$,
$j=1,2$, of selfadjoint random variables such that
\begin{enumerate}
\item $\mathcal{F}_{1}$ is free from $\mathcal{F}_{2}$,
\item The distribution of $x_{j}(0)$ is $\mu_{j}$, $j=1,2$.
\item $\lim_{\varepsilon\to0}\|x_{j}(\varepsilon)-x_{j}(0)\|=0$.
\item for $\varepsilon>0$ and $j=1,2$, the distribution $\nu_{j}(\varepsilon)$
of $x_{j}(\varepsilon)$ is absolutely continuous, supported on $n_{j}$
pairwise disjoint compact intervals $[a,b]$, and the density of $\nu_{j}(\varepsilon)$
is comparable with $\sqrt{(t-a)(b-t)}$ on each of these intervals.
\end{enumerate}
\end{cor}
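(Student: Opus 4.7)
The plan is to apply Lemma \ref{lem:Jacobi approximation} separately to $\mu_1$ and $\mu_2$, and then embed the resulting elements into a free product of two copies of $L^\infty(0,1)$ using the canonical trace-preserving inclusions, which are isometric $*$-homomorphisms. The freeness requirement is then satisfied by construction, and the norm convergence requirement is inherited from the $L^\infty$ estimate in the lemma.

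Concretely, for each $j\in\{1,2\}$ I first apply Lemma \ref{lem:Jacobi approximation} to $\mu_j$. Using the remark preceding the corollary, the element $x_j\in L^\infty(0,1)$ with distribution $\mu_j$ depends only on $\mu_j$, so I fix one such $x_j$ once and for all, and then for each $\varepsilon>0$ obtain an element $y_j(\varepsilon)\in L^\infty(0,1)$ with $\|x_j-y_j(\varepsilon)\|_\infty<\varepsilon$ whose distribution has the required Jacobi form on $n_j$ disjoint compact intervals. I then form the reduced free product $(\mathcal{A},\tau)=(L^\infty(0,1),\tau_1)\star(L^\infty(0,1),\tau_2)$, where $\tau_j$ is integration against Lebesgue measure, and let $\iota_j:L^\infty(0,1)\hookrightarrow\mathcal{A}$ denote the canonical $\tau$-preserving $*$-embedding onto the $j$-th factor. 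Finally I set $x_j(0):=\iota_j(x_j)$ and $x_j(\varepsilon):=\iota_j(y_j(\varepsilon))$ for $\varepsilon>0$, with $\mathcal{F}_j:=\{x_j(\varepsilon):\varepsilon>0\}$.

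Property (1) is then immediate because $\mathcal{F}_j\subset\iota_j(L^\infty(0,1))$ and the two factors of the free product are free by definition; properties (2) and (4) are direct transcriptions of Lemma \ref{lem:Jacobi approximation}, since $\iota_j$ preserves distributions; and property (3) follows from $\|x_j(\varepsilon)-x_j(0)\|=\|y_j(\varepsilon)-x_j\|_\infty<\varepsilon$, provided $\iota_j$ is isometric. The only point requiring justification, and the main (though mild) obstacle, is precisely this isometry: this is standard for the reduced free product of tracial von Neumann algebras, since the GNS representation on the free product Hilbert space restricts to a faithful representation of each factor, forcing the ambient $C^*$-norm to agree with the $L^\infty$-norm on $\iota_j(L^\infty(0,1))$. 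With that verified, the argument reduces entirely to the approximation provided by the preceding lemma.
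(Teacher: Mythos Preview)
Your proposal is correct and follows exactly the approach indicated in the paper: apply Lemma~\ref{lem:Jacobi approximation} to each $\mu_j$ (using the observation that $x_j$ depends only on $\mu_j$), and embed into the free product $L^\infty\star L^\infty$ via the canonical trace-preserving isometric inclusions. Your added remark justifying that $\iota_j$ is isometric is the only detail the paper leaves implicit, and you handle it correctly.
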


With the notation of the preceding corollary, we have
\[
\lim_{\varepsilon\to0}\|(x_{1}(0)+x_{2}(0))-(x_{1}(\varepsilon)+x_{2}(\varepsilon))\|=0,
\]
and therefore, for sufficiently small $\varepsilon$, $\sigma(x_{1}(\varepsilon)+x_{2}(\varepsilon))$
has at least as many connected components as $\sigma(x_{1}+x_{2})$
by Proposition \ref{prop:number of components for nearby elements}.
The distribution of $x_{1}+x_{2}$ is precisely $\mu_{1}\boxplus\mu_{2}$,
so $\sigma(x_{1}+x_{2})={\rm supp}(\mu_{1}\boxplus\mu_{2})$. Similarly,
$\sigma(x_{1}(\varepsilon)+x_{2}(\varepsilon))={\rm supp}(\nu_{1}(\varepsilon)\boxplus\nu_{2}(\varepsilon))$,
and the number of connected components of this set was estimated by
the results of \cite{mor-sch}. This proves the following result.
\begin{cor}
\label{cor:Moreillon estimate}Under the hypotheses of Corollary \emph{\ref{cor:Jacoby approx for free var}},
${\rm supp}(\mu_{1}\boxplus\mu_{2})$ has at most $2n_{1}n_{2}-1$
connected components.
\end{cor}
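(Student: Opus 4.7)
The plan is to realize both measures as spectral distributions of self-adjoint elements inside a single $C^{*}$-algebra and then run the perturbation argument sketched in the paragraph preceding the statement.

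First, I would invoke Corollary~\ref{cor:Jacoby approx for free var} to produce the two free families $\mathcal{F}_{1}$ and $\mathcal{F}_{2}$ inside the free product $L^{\infty}\star L^{\infty}$. By construction, $x_{j}(0)$ is distributed as $\mu_{j}$, $x_{j}(\varepsilon)$ is distributed as a Jacobi type measure $\nu_{j}(\varepsilon)$, and $\|x_{j}(\varepsilon)-x_{j}(0)\|\to 0$. Since $\mathcal{F}_{1}$ is free from $\mathcal{F}_{2}$, $x_{1}(0)+x_{2}(0)$ has distribution $\mu_{1}\boxplus\mu_{2}$ and $x_{1}(\varepsilon)+x_{2}(\varepsilon)$ has distribution $\nu_{1}(\varepsilon)\boxplus\nu_{2}(\varepsilon)$. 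Because these are self-adjoint elements of a $C^{*}$-algebra, their spectra coincide with the supports of their distributions.

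Second, I would observe that the approximating measures $\nu_{j}(\varepsilon)$ satisfy all the hypotheses of \cite{mor-sch}: they are compactly supported, absolutely continuous, and their densities are comparable to $\sqrt{(t-a)(b-t)}$ (a Jacobi type density with exponent $1/2$) on each of the $n_{j}$ intervals in their support. Hence the Moreillon--Schnelli estimate applies and yields
\[
\#\{\text{components of }{\rm supp}(\nu_{1}(\varepsilon)\boxplus\nu_{2}(\varepsilon))\}\le 2n_{1}n_{2}-1,\quad\varepsilon>0.
\]

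Finally, the triangle inequality gives $\|(x_{1}(\varepsilon)+x_{2}(\varepsilon))-(x_{1}(0)+x_{2}(0))\|\to 0$, and I would close the argument by applying Proposition~\ref{prop:number of components for nearby elements} with $x=x_{1}(0)+x_{2}(0)$ and $y=x_{1}(\varepsilon)+x_{2}(\varepsilon)$: for sufficiently small $\varepsilon$, $\sigma(y)$ has at least as many connected components as $\sigma(x)$, and combining this with the previous bound gives the desired inequality. The only mild subtlety I foresee is that Proposition~\ref{prop:number of components for nearby elements} is stated under the assumption that $\sigma(x)$ has \emph{finitely many} components, whereas a priori ${\rm supp}(\mu_{1}\boxplus\mu_{2})$ could have infinitely many. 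I would handle this by the contrapositive: if this support had $N\ge 2n_{1}n_{2}$ components, one could choose $N$ pairwise disjoint open sets each meeting it, and Newburgh's theorem (applied exactly as in the paragraph following Proposition~\ref{prop:number of components for nearby elements}) would force $\sigma(y)$ to meet all $N$ sets for small $\varepsilon$, contradicting the Moreillon--Schnelli bound.
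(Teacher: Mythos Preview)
Your argument is correct and follows essentially the same route as the paper: the proof given in the paragraph preceding the corollary uses exactly the norm approximation from Corollary~\ref{cor:Jacoby approx for free var}, the identification of spectra with supports, the Moreillon--Schnelli bound for the Jacobi approximants, and Proposition~\ref{prop:number of components for nearby elements}. Your contrapositive handling of the finite-components hypothesis is a welcome clarification that the paper leaves implicit.
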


Naturally, under this generality, some connected components of $\mu_{1}\boxplus\mu_{2}$
can be singletons, and there can even be point masses embedded in
the absolutely continuous support. Such situations do not arise for
the measures considered in \cite{mor-sch}. However, many of the sharper
estimates of that work should survive in complete generality.

\section{Invertibility and $L^{2}$ norms\label{sec:prelim}}

We consider a tracial $W^{*}$-probability space $(\mathcal{A},\tau)$,
that is, $\mathcal{A}$ is a von Neumann algebra and $\tau$ is a
faithful normal trace on $\mathcal{A}$ such that $\tau(1_{\mathcal{A}})=1$.
Thus, the elements of $\mathcal{A}$ play the role of bounded random
variables. Arbitrary random variables are elements of the larger algebra
$\widetilde{\mathcal{A}}$ consisting of the (generally unbounded)
operators affiliated with $\mathcal{A}$. The trace $\tau$ extends
to the positive operators in $\widetilde{\mathcal{A}}$, thus allowing
us to talk of the $L^{2}$ norm of an arbitrary random variable: $\|a\|_{2}^{2}=\tau(a^{*}a)$.
The Hilbert space $L^{2}(\tau)$ consists of those $a\in\widetilde{\mathcal{A}}$
for which this norm is finite.

Suppose that $a_{1},a_{2}\in\mathcal{A}$ are two $*$-free random
variables that satisfy $\tau(a_{1})=\tau(a_{2})=0$. It was observed
in \cite{haagerup-larsen} that the spectral radius of the product
$a_{1}a_{2}$ is $\|a_{1}\|_{2}\|a_{2}\|_{2}$ and
\[
\|(\lambda1_{\mathcal{A}}-a_{1}a_{2})^{-1}\|_{2}^{2}=\frac{1}{|\lambda|^{2}-\|a_{1}\|_{2}^{2}\|a_{2}\|_{2}^{2}},\quad\lambda\in\mathbb{C},|\lambda|>\|a_{1}\|_{2}\|a_{2}\|_{2}.
\]
This last formula arises from the equality
\[
(\lambda1_{\mathcal{A}}-a_{1}a_{2})^{-1}=\sum_{n=0}^{\infty}\frac{1}{\lambda^{n+1}}(a_{1}a_{2})^{n},
\]
and from the fact that the terms of the series form an orthogonal
sequence in $L^{2}(\mathcal{A})$ with
\[
\|(a_{1}a_{2})^{n}\|_{2}=\|a_{1}\|_{2}^{n}\|a_{2}\|_{2}^{n}.
\]
We reproduce the following useful result of Lehner \cite{lehner}.
\begin{lem}
\label{lem:Lehner} Suppose that $(a_{n})_{n\in\mathbb{N}}$ and $(b_{n})_{n\in\mathbb{N}}$
are two sequences in $\mathcal{A}$ that converge in norm to $a$
and $b$ respectively. Suppose also that, for every $n\in\mathbb{N},$
the variables $a_{n}$ and $b_{n}$ are $*$-free, $\tau(a_{n})=\tau(b_{n})=0$,
and $\|a_{n}\|_{2}\|b_{n}\|_{2}<1$. Then $1_{\mathcal{A}}-ab$ is
invertible in $\mathcal{A}$ if and only if $\|a\|_{2}\|b\|_{2}<1$.
\end{lem}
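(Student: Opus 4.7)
The plan is to reduce both implications to the two facts recalled immediately before the statement: the Haagerup--Larsen identity $\mathrm{spr}(a_{1}a_{2})=\|a_{1}\|_{2}\|a_{2}\|_{2}$ for $*$-free centered $a_{1},a_{2}\in\mathcal{A}$, and the explicit formula $\|(\lambda 1_{\mathcal{A}}-a_{1}a_{2})^{-1}\|_{2}^{2}=1/(|\lambda|^{2}-\|a_{1}\|_{2}^{2}\|a_{2}\|_{2}^{2})$ valid whenever $|\lambda|>\|a_{1}\|_{2}\|a_{2}\|_{2}$.

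For the forward direction I would first observe that norm convergence $a_{n}\to a$ and $b_{n}\to b$ transfers every mixed $*$-moment to the limits, so $a$ and $b$ inherit $*$-freeness and satisfy $\tau(a)=\tau(b)=0$. The Haagerup--Larsen identity then yields $\mathrm{spr}(ab)=\|a\|_{2}\|b\|_{2}<1$, which immediately gives invertibility of $1_{\mathcal{A}}-ab$ in $\mathcal{A}$.

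For the converse, I would assume $1_{\mathcal{A}}-ab$ is invertible. The hypothesis $\|a_{n}\|_{2}\|b_{n}\|_{2}<1$ allows the displayed formula with $\lambda=1$ to be applied to each pair $(a_{n},b_{n})$, producing
\[
\|(1_{\mathcal{A}}-a_{n}b_{n})^{-1}\|_{2}^{2}=\frac{1}{1-\|a_{n}\|_{2}^{2}\|b_{n}\|_{2}^{2}}
\]
for every $n$. Since $a_{n}b_{n}\to ab$ in norm and $1_{\mathcal{A}}-ab$ is invertible, openness of the group of invertibles together with continuity of inversion gives $(1_{\mathcal{A}}-a_{n}b_{n})^{-1}\to(1_{\mathcal{A}}-ab)^{-1}$ in operator norm, hence in $L^{2}$-norm via $\|\cdot\|_{2}\le\|\cdot\|$. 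Thus the left-hand side of the identity converges to the finite quantity $\|(1_{\mathcal{A}}-ab)^{-1}\|_{2}^{2}$. The same inequality $\|\cdot\|_{2}\le\|\cdot\|$ forces $\|a_{n}\|_{2}\to\|a\|_{2}$ and $\|b_{n}\|_{2}\to\|b\|_{2}$, so the denominator on the right converges to $1-\|a\|_{2}^{2}\|b\|_{2}^{2}$. Finiteness of the limit then forces that denominator to be strictly positive, i.e.\ $\|a\|_{2}\|b\|_{2}<1$.

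I do not expect a serious obstacle. The only step that might look delicate is the norm convergence of $(1_{\mathcal{A}}-a_{n}b_{n})^{-1}$ to $(1_{\mathcal{A}}-ab)^{-1}$, but this is automatic here: by the forward direction applied at level $n$, each $1_{\mathcal{A}}-a_{n}b_{n}$ is already invertible, so only the standard Banach-algebra continuity of inversion at the invertible element $1_{\mathcal{A}}-ab$ is needed.
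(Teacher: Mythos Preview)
Your proposal is correct and follows essentially the same route as the paper. Both arguments use the spectral radius identity for the easy direction and the explicit $L^{2}$-formula $\|(1_{\mathcal{A}}-a_{n}b_{n})^{-1}\|_{2}^{2}=1/(1-\|a_{n}\|_{2}^{2}\|b_{n}\|_{2}^{2})$ together with norm-continuity of inversion for the other; the only cosmetic difference is that the paper runs the second direction as a contrapositive (assuming $\|a\|_{2}\|b\|_{2}\ge1$ and deducing that $\|(1_{\mathcal{A}}-a_{n}b_{n})^{-1}\|\to\infty$), whereas you argue directly.
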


\begin{proof}
Observe that $a$ and $b$ are also $*$-free and $\tau(a)=\tau(b)=0$.
If $\|a\|_{2}\|b\|_{2}<1$, the spectral radius of $ab$ is less than
one, and thus $1_{\mathcal{A}}-ab$ is invertible. On the other hand,
if $\|a\|_{2}\|b\|_{2}\ge1$, the observations preceding the statement
of the lemma imply that
\[
\lim_{n\to\infty}\|(1_{\mathcal{A}}-a_{n}b_{n})^{-1}\|_{2}=+\infty,
\]
and therefore 
\[
\lim_{n\to\infty}\|(1_{\mathcal{A}}-a_{n}b_{n})^{-1}\|=+\infty.
\]
Since $1_{\mathcal{A}}-a_{n}b_{n}$ converges to $1_{\mathcal{A}}-ab$
in norm, and since the map $x\mapsto x^{-1}$ is norm-continuous,
it follows that $1_{\mathcal{A}}-ab$ is not invertible.
\end{proof}
Of course, the preceding lemma is true if ordinary sequences are replaced
by nets, so the following corollary is immediate. (However, we only
use this result when $X$ is a subset of the Riemann sphere.)
\begin{cor}
\label{cor: clopen}Suppose that $X$ is a topological space and that
$u_{1},u_{2}:X\to\mathcal{A}$ are continuous\emph{ }functions\emph{
(}when $\mathcal{A}$ is endowed with the operator norm\emph{)} such
that, for every $x\in X$, $u_{1}(x)$ and $u_{2}(x)$ are $*$-free,
$\tau(u_{1}(x))=\tau(u_{2}(x))=0$, and $1_{\mathcal{A}}-u_{1}(x)u_{2}(x)$
is invertible. Then the set
\[
Y=\{x\in X:\|u_{1}(x)\|_{2}\|u_{2}(x)\|_{2}<1\}
\]
is both open and closed in $X$. In particular, if $X$ is connected,
we have either $Y=X$ or $Y=\varnothing$.
\end{cor}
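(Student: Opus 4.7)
The plan is to prove openness and closedness of $Y$ separately, both by direct applications of the net version of Lemma \ref{lem:Lehner} mentioned in the remark following its proof. The final sentence of the corollary is then automatic, since a clopen subset of a connected space must be either empty or the whole space.

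For openness, the key observation is that the map $x \mapsto \|u_i(x)\|_2$ is continuous on $X$. Indeed, $\|u_i(x)\|_2^2 = \tau(u_i(x)^* u_i(x))$, and this is the composition of $u_i$ (continuous into $\mathcal{A}$ in operator norm), involution together with multiplication (both norm-continuous), and $\tau$ (norm-continuous on $\mathcal{A}$). Hence $x \mapsto \|u_1(x)\|_2 \|u_2(x)\|_2$ is continuous as a real-valued function, and $Y$ is the preimage of $(-\infty,1)$.

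For closedness, I would take a net $(x_\alpha)_\alpha$ in $Y$ with $x_\alpha \to x_0 \in X$. Setting $a_\alpha = u_1(x_\alpha)$, $b_\alpha = u_2(x_\alpha)$, $a = u_1(x_0)$, $b = u_2(x_0)$, the continuity of $u_1$ and $u_2$ gives $a_\alpha \to a$ and $b_\alpha \to b$ in norm, while the hypotheses of the corollary guarantee that each pair $(a_\alpha, b_\alpha)$ is $*$-free, has vanishing traces, and (because $x_\alpha \in Y$) satisfies $\|a_\alpha\|_2 \|b_\alpha\|_2 < 1$. The net version of Lemma \ref{lem:Lehner} therefore yields that $1_{\mathcal{A}} - ab$ is invertible if and only if $\|a\|_2 \|b\|_2 < 1$. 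Since the corollary's standing hypothesis tells us that $1_{\mathcal{A}} - u_1(x_0) u_2(x_0) = 1_{\mathcal{A}} - ab$ is indeed invertible, we conclude $\|a\|_2 \|b\|_2 < 1$, i.e., $x_0 \in Y$.

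There is no genuine obstacle here; the corollary is essentially a topological repackaging of Lemma \ref{lem:Lehner}. The one point to emphasize is that the hypothesis of invertibility of $1_{\mathcal{A}} - u_1(x) u_2(x)$ must hold at \emph{every} point of $X$, including the limiting one, for the closedness half to go through, since otherwise a net in $Y$ could in principle accumulate at a point where $\|u_1\|_2 \|u_2\|_2 = 1$.
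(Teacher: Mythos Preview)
Your proof is correct and matches the paper's approach: the paper simply asserts that the corollary is immediate from the net version of Lemma~\ref{lem:Lehner}, and you have supplied exactly those details---continuity of $x\mapsto\|u_j(x)\|_2$ for openness, and the net form of Lemma~\ref{lem:Lehner} together with the standing invertibility hypothesis for closedness. Your closing remark correctly isolates why the invertibility assumption at every point of $X$ is essential.
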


\begin{rem}
\label{rem:a bit of algebra}The preceding result will be used, for
instance, when $a_{j}=b_{j}^{-1}-1_{\mathcal{A}}$, where $b_{1},b_{2}\in\widetilde{\mathcal{A}}$
are two boundedly invertible operators such that $\tau(b_{1}^{-1})=\tau(b_{2}^{-1})=1$.
In this special case, the identity
\[
1_{\mathcal{A}}-a_{1}a_{2}=b_{1}^{-1}(1_{\mathcal{A}}-b_{1}-b_{2})b_{2}^{-1},
\]
and therefore the invertibility of $1_{\mathcal{A}}-a_{1}a_{2}$ is
equivalent to the invertibility of $1_{\mathcal{A}}-b_{1}-b_{2}$.
(This identity was used by \cite{haagerup-fields} to provide a proof
of the linearization property of the $R$-transform.)
\end{rem}

Suppose now that $x_{1},x_{2}\in\widetilde{\mathcal{A}}$ are selfadjoint
operators. We denote by $\mathbb{H}=\{\alpha+i\beta:\alpha,\beta\in\mathbb{R},\beta>0\}$
the complex upper half-plane. We need Remark \ref{rem:a bit of algebra}
for elements of the form 
\[
b_{j}=\tau((\lambda_{j}1_{\mathcal{A}}-x_{j})^{-1})(\lambda_{j}1_{\mathcal{A}}-x_{j}),\quad\lambda_{j}\in\mathbb{C}\backslash\sigma(x_{j}),j=1,2,
\]
where $\tau((\lambda_{j}1_{\mathcal{A}}-a_{j})^{-1})\ne0$. In this
case
\[
a_{j}=\frac{1}{\tau((\lambda_{j}1_{\mathcal{A}}-x_{j})^{-1})}(\lambda_{j}1_{\mathcal{A}}-x_{j})^{-1}-1_{\mathcal{A}},\quad j=1,2.
\]
In order to apply Lemma \ref{lem:Lehner} to such elements, we need
to estimate their $2$-norms. In the following result we use the notation
$\lambda\to_{\varangle}\infty$ to indicate that $\lambda\in\mathbb{H}$
tends to infinity but the ratio $\Re\lambda/\Im\lambda$ remains bounded.
In other words, $\lambda$ tends to $\infty$ nontangentially (to
$\mathbb{R})$.
\begin{lem}
\label{lem:2-norm of centered resolvents} Fix a selfadjoint element
$x\in\widetilde{\mathcal{A}}$ and set
\[
a(\lambda)=\frac{1}{\tau((\lambda1_{\mathcal{A}}-a)^{-1})}(\lambda1_{\mathcal{A}}-a)^{-1}-1_{\mathcal{A}},\quad\lambda\in\mathbb{H}.
\]
Then
\[
\lim_{\lambda\to_{\varangle}\infty}\|a(\lambda)\|_{2}=0.
\]
\end{lem}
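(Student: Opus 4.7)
The plan is to reduce the claim to a pair of Cauchy-transform asymptotics via a direct $L^{2}$ computation. Writing $R(\lambda)=(\lambda 1_{\mathcal{A}}-x)^{-1}$ and $g(\lambda)=\tau(R(\lambda))$, expanding $\|a(\lambda)\|_{2}^{2}=\tau(a(\lambda)^{*}a(\lambda))$ and using $\tau(R(\lambda))/g(\lambda)=1$ to collapse the cross-terms yields
\[
\|a(\lambda)\|_{2}^{2}=\frac{\tau(R(\lambda)^{*}R(\lambda))}{|g(\lambda)|^{2}}-1.
\]
Since $R(\lambda)^{*}=R(\overline{\lambda})$ commutes with $R(\lambda)$ (both are functions of the selfadjoint $x$), the functional calculus applied to the distribution $\mu$ of $x$ rewrites this as
\[
\|a(\lambda)\|_{2}^{2}=\frac{\int_{\mathbb{R}}|\lambda-t|^{-2}\,d\mu(t)}{\bigl|\int_{\mathbb{R}}(\lambda-t)^{-1}\,d\mu(t)\bigr|^{2}}-1.
\]

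Next I would multiply numerator and denominator by $|\lambda|^{2}$ and show that both
\[
\int_{\mathbb{R}}\frac{|\lambda|^{2}\,d\mu(t)}{|\lambda-t|^{2}}\to 1\qquad\text{and}\qquad\left|\int_{\mathbb{R}}\frac{\lambda\,d\mu(t)}{\lambda-t}\right|^{2}\to 1
\]
as $\lambda\to_{\varangle}\infty$. For each fixed $t$ the integrands converge pointwise to $1$. The role of the nontangential hypothesis is to produce the uniform majorant needed for dominated convergence against the finite measure $\mu$: writing $\lambda=\alpha+i\beta$ with $|\alpha|\le M\beta$, one has $|\lambda-t|^{2}\ge\beta^{2}$ while $|\lambda|^{2}\le(1+M^{2})\beta^{2}$, so $|\lambda|/|\lambda-t|\le\sqrt{1+M^{2}}$ independently of $t$ and of the (possibly unbounded) support of $\mu$.

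The only ancillary point to verify is that $a(\lambda)$ is well defined, i.e., $g(\lambda)\ne 0$; this is automatic for $\lambda\in\mathbb{H}$ since $\Im g(\lambda)=-(\Im\lambda)\int_{\mathbb{R}}|\lambda-t|^{-2}\,d\mu(t)<0$. With both integrals above tending to $\mu(\mathbb{R})=1$, the ratio in the displayed formula tends to $1$ and therefore $\|a(\lambda)\|_{2}\to 0$. I do not foresee any real obstacle: the nontangential condition is tailored precisely to the dominated convergence step, and the unboundedness of $x$ disappears as soon as the problem is phrased in terms of $\mu$.
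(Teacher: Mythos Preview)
Your argument is correct and follows essentially the same route as the paper: pass to $L^{2}(\mu)$ via the spectral theorem for the selfadjoint $x$, rescale by $\lambda$, and invoke dominated convergence using the nontangential bound $|\lambda|/|\lambda-t|\le\sqrt{1+M^{2}}$. The only cosmetic difference is that the paper recenters at $1/\lambda$ (using the variance inequality and the asymptotic $g(\lambda)=\lambda^{-1}(1+o(1))$) before applying DCT to $|t/(\lambda-t)|^{2}\to 0$, whereas you keep the ratio form and apply DCT to $|\lambda/(\lambda-t)|^{2}\to 1$ in numerator and denominator separately; your organization is in fact slightly cleaner.
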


\begin{proof}
Denote by $\mu$ the distribution of $x$. That is, $\mu$ is a Borel
probability measure on $\mathbb{R}$ such that
\[
\tau(u(x))=\int_{\mathbb{R}}u(t)\,d\mu(t)
\]
for every bounded Borel function $u$ on $\mathbb{R}.$ In particular,
\[
\tau((\lambda1_{\mathcal{A}}-a)^{-1})=\int_{\mathbb{R}}\frac{d\mu(t)}{\lambda-t}=\frac{1}{\lambda}+\frac{1}{\lambda}\int_{\mathbb{R}}\frac{t\,d\mu(t)}{\lambda-t},\quad\lambda\in\mathbb{H}.
\]
This implies that
\begin{equation}
\tau((\lambda1_{\mathcal{A}}-a)^{-1})=\frac{1}{\lambda}(1+o(1))\text{ as }\lambda\to_{\varangle}\infty.\label{eq:*}
\end{equation}
 Since
\[
a(\lambda)=\frac{1}{\tau((\lambda1_{\mathcal{A}}-a)^{-1})}[(\lambda1_{\mathcal{A}}-a)^{-1}-\tau((\lambda1_{\mathcal{A}}-a)^{-1})],
\]
we have
\[
\|a(\lambda)\|_{2}=\frac{1}{|\tau((\lambda1_{\mathcal{A}}-a)^{-1})|}\|(\lambda1_{\mathcal{A}}-a)^{-1}-\tau((\lambda1_{\mathcal{A}}-a)^{-1})\|_{2},
\]
and therefore the norm on the right is precisely the standard deviation
of the function $1/(\lambda-t)$ viewed as a random variable in $L^{2}(\mu)$.
Thus by (\ref{eq:*}),
\begin{align*}
\|(\lambda1_{\mathcal{A}}-a)^{-1}-\tau((\lambda1_{\mathcal{A}}-a)^{-1})\|_{2}^{2} & \le\|(\lambda1_{\mathcal{A}}-a)^{-1}-\lambda^{-1}1_{\mathcal{A}}\|_{2}^{2}+o(|\lambda|^{-1})\\
 & =\int_{\mathbb{R}}\left|\frac{1}{\lambda-t}-\frac{1}{\lambda}\right|^{2}d\mu(t)+o(|\lambda|^{-1})\\
 & =\frac{1}{|\lambda|^{2}}\int_{\mathbb{R}}\left|\frac{t}{\lambda-t}\right|^{2}d\mu(t)+o(|\lambda|^{-1}),
\end{align*}
and this implies (using, for instance, the Lebesgue bounded convergence
theorem) that
\begin{equation}
\|(\lambda1_{\mathcal{A}}-a)^{-1}-\tau((\lambda1_{\mathcal{A}}-a)^{-1})\|_{2}=o(|\lambda|^{-1})\text{ as }\text{\ensuremath{\lambda\to_{\varangle}\infty.}}\label{eq:**}
\end{equation}
The lemma follows.
\end{proof}

\section{\label{sec:additive} The analysis of free additive convolution}

Suppose that $\mu$ is a Borel probability measure on $\mathbb{R}$.
The \emph{Cauchy transform}
\[
G_{\mu}(z)=\int_{\mathbb{R}}\frac{d\mu(t)}{z-t},\quad z\in\mathbb{C}\backslash\mathbb{R},
\]
is an analytic function that maps the upper half-plane $\mathbb{H}$
to $-\mathbb{H}$, $G_{\mu}(\overline{z})=\overline{G_{\mu}(z)}$
for $z\in\mathbb{H}$, and
\[
\lim_{y\uparrow\infty}iyG_{\mu}(iy)=1.
\]
 Moreover, every analytic function $G:\mathbb{C}\backslash\mathbb{R}\to\mathbb{C}$
that satisfies these conditions is the Cauchy transform of a unique
Borel probability measure on $\mathbb{R}$ (see \cite{akh}). We also
note that $G_{\mu}$ is in fact analytic in the complement $\mathbb{C}\backslash{\rm supp}(\mu)$
of the support of $\mu,$ and real-valued on $\mathbb{R}\backslash{\rm supp}(\mu)$.
It follows that the \emph{reciprocal Cauchy transform}
\[
F_{\mu}(z)=\frac{1}{G_{\mu}(z)}
\]
is a meromorphic function in $\mathbb{C}\backslash{\rm supp}(\mu)$.
Moreover, $F_{\mu}$ maps $\mathbb{H}$ to $\mathbb{H},$ and
\[
\lim_{y\uparrow\infty}\frac{F_{\mu}(iy)}{iy}=1.
\]
 Conversely, every analytic function $F:\mathbb{C}\backslash\mathbb{R}\to\mathbb{C}$
such that $F(\mathbb{H})\subset\mathbb{H},$ $F(\overline{z})=\overline{F(z)}$,
and
\[
\lim_{y\uparrow\infty}\frac{F(iy)}{iy}=1,
\]
is the reciprocal Cauchy transform of a unique Borel probability measure
on $\mathbb{R}$ \cite{akh}. There is an alternate description, due
to R. Nevanlinna, of such reciprocal Cauchy transforms $F$: there
exist a finite, positive Borel measure $\sigma$ on $\mathbb{R}$,
and a constant $c\in\mathbb{R},$ such that
\begin{equation}
F(z)=c+z+\int_{\mathbb{R}}\frac{1+zt}{t-z}\,d\sigma(t),\quad z\in\mathbb{C}\backslash\mathbb{R}.\label{eq:Nev-rep general F}
\end{equation}
In case $F=F_{\mu},$ we denote the corresponding measure $\sigma$
by $\sigma_{\mu}$ and the constant $c$ by $c_{\mu}.$ The measure
$\sigma_{\mu}$ is zero precisely when $\mu$ is a unit point mass.
In general, 
\[
{\rm supp}(\sigma_{\mu})=({\rm supp}(\mu)\backslash A)\cup B,
\]
where $A$ is the set of isolated point masses of $\mu$ and 
\[
B=\{t\in\mathbb{R\backslash{\rm supp}}(\mu):G_{\mu}(t)=0\}
\]
consists of the poles of $F_{\mu}$ in $\mathbb{C}\backslash{\rm supp}(\mu)$.
There is at most one such pole in each bounded connected component
of $\mathbb{R}\backslash{\rm supp}(\mu)$. It is easily seen from
(\ref{eq:Nev-rep general F}) that $\Im F_{\mu}(z)\ge\Im z$ for every
$z\in\mathbb{H},$ and the inequality is strict unless $\mu$ is a
point mass. Moreover, $F_{\mu}$ is real-valued and strictly increasing
on every connected component of $\mathbb{R}\backslash{\rm supp}(\sigma_{\mu})$.

Next, we discuss the free additive convolution. Suppose that $\mu_{1}$
and $\mu_{2}$ are two Borel probability measures on $\mathbb{R}.$
The free additive convolution $\mu=\mu_{1}\boxplus\mu_{2}$ is a new
Borel probability measure on $\mathbb{R},$ first defined in \cite{voic-boxplus}
(see also \cite{maass,BV-unbounded}). It is useful for our purposes
to use the fact that  there exists an analytic subordination relation
between the functions $F_{\mu}$, $F_{\mu_{1}},$ and $F_{\mu_{2}}$,
first seen in \cite{voic-fish1} and \cite{biane} (see also \cite{voic-coalg}
for a more general result). The following result incorporates more
recent information found in \cite{serb-boundedness,serb-leb,bb-new-approach,BBH}
If $\varphi:\mathbb{H}\to\mathbb{H}$ is analytic, different from
the identity map, then either $\varphi$ has a unique fixed point
$z_{0}\in\mathbb{H},$ or the iterates 
\[
\varphi^{n}=\underbrace{\varphi\circ\cdots\circ\varphi}_{n\text{ times}}
\]
 converge everywhere to a point $z_{0}\in\mathbb{R}\cup\{\infty\}$.
In both cases, we call $z_{0}$ the \emph{Denjoy-Wolff point} of $\varphi$.
(This may be at variance with the common use of this term when $\varphi$
is a conformal automorphism.)
\begin{thm}
\label{thm:boxplus subordination}Let $\mu_{1}$ and $\mu_{2}$ be
Borel probability measures on $\mathbb{R}$, neither of which is a
point mass, and let $\mu=\mu_{1}\boxplus\mu_{2}$. There exist unique
continous functions $\omega_{1},\omega_{2}:\mathbb{H}\cup\mathbb{R}\to\mathbb{H}\cup\mathbb{R}\cup\{\infty\}$
such that\emph{:}
\begin{enumerate}
\item $\omega_{j}(\mathbb{H})\subset\mathbb{H}$ and $\omega_{j}$ is analytic
on $\mathbb{H}$ for $j=1,2$.
\item $F_{\mu}(z)=F_{\mu_{1}}(\omega_{1}(z))=F_{\mu_{2}}(\omega_{2}(z))=\omega_{1}(z)+\omega_{2}(z)-z$
for every $z\in\mathbb{H}.$
\item $\lim_{y\uparrow\infty}\omega_{j}(iy)/iy=1$ for $j=1,2$, and
\item For every $\alpha\in\mathbb{H\cup\mathbb{R}}$, with only one possible
exception, $\omega_{1}(\alpha)$ is the Denjoy-Wolff point of the
map $\varphi_{\alpha}(z)=\alpha+h_{2}(\alpha+h_{1}(z))$, where $h_{j}(z)=F_{\mu_{j}}(z)-z$
for $j=1,2$ and $z\in\mathbb{H}$. Symmetrically, $\omega_{2}(\alpha)$
is the Denjoy-Wolff point of the map $z\mapsto\alpha+h_{1}(\alpha+h_{2}(z))$.
In the exceptional case, $\alpha$ is real, $\varphi_{\alpha}$ is
the identity map, and this can occur only when the supports of both
$\mu_{1}$ and $\mu_{2}$ consist of two points.
\end{enumerate}
\end{thm}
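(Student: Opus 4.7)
The plan is to construct $\omega_1, \omega_2$ first on the open upper half-plane $\mathbb{H}$ via the Denjoy-Wolff fixed-point theorem, verify (1)-(3) from the construction, then extend continuously to $\mathbb{R}$, and finally verify the boundary dynamics statement (4). For the construction on $\mathbb{H}$, the Nevanlinna representation (\ref{eq:Nev-rep general F}) gives that each $h_j(z)=F_{\mu_j}(z)-z$ maps $\mathbb{H}$ into $\overline{\mathbb{H}}$, with $\Im h_j(z)>0$ strictly on $\mathbb{H}$ because $\mu_j$ is not a point mass. Hence for each $\alpha\in\mathbb{H}$, the map $\varphi_\alpha(z)=\alpha+h_2(\alpha+h_1(z))$ satisfies $\Im\varphi_\alpha(z)\geq\Im\alpha>0$, sending $\mathbb{H}$ strictly inside itself. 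A standard Earle-Hamilton / Denjoy-Wolff argument supplies a unique attracting fixed point $\omega_1(\alpha)\in\mathbb{H}$; put $\omega_2(\alpha)=\alpha+h_1(\omega_1(\alpha))$. Holomorphic dependence in $\alpha$ comes from the implicit function theorem at the fixed point, whose derivative has modulus $<1$.

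Next, unpacking $\varphi_\alpha(\omega_1(\alpha))=\omega_1(\alpha)$ gives $h_2(\omega_2(\alpha))=\omega_1(\alpha)-\alpha$, hence $F_{\mu_2}(\omega_2(\alpha))=\omega_1(\alpha)+\omega_2(\alpha)-\alpha$, and symmetrically $F_{\mu_1}(\omega_1(\alpha))$ equals the same common value. The nontangential asymptotic $h_j(iy)/iy\to 0$, immediate from the Nevanlinna representation, yields $\omega_j(iy)/iy\to 1$, so the common value defines a reciprocal Cauchy transform; this measure is $\mu_1\boxplus\mu_2$ by the subordination characterization of \cite{voic-fish1,biane}. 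The continuous extension of $\omega_1,\omega_2$ to $\mathbb{R}$, taking values in $\mathbb{H}\cup\mathbb{R}\cup\{\infty\}$, is the content of \cite{serb-boundedness,serb-leb,bb-new-approach,BBH}, via the Julia-Carath\'eodory theorem applied to the iterates of $\varphi_\alpha$. Uniqueness of $(\omega_1,\omega_2)$ satisfying (1)-(3) follows from the uniqueness of $F_\mu$ together with the local injectivity of $F_{\mu_j}$ near $\infty$ in $\mathbb{H}$.

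The main remaining task, and the most delicate, is (4) for real $\alpha$. For $\alpha\in\mathbb{H}$ it is tautological from the construction. For $\alpha\in\mathbb{R}$, $\varphi_\alpha$ extends by continuity to a self-map of $\mathbb{H}$ which is no longer a strict hyperbolic contraction, so the Denjoy-Wolff dichotomy allows an interior fixed point, a boundary (parabolic or hyperbolic) fixed point attracting the iterates, or the identity. In each non-identity case, continuity of $\omega_1$ at $\alpha$, combined with the fact that $\omega_1(\alpha+i\varepsilon)$ is the Denjoy-Wolff point of $\varphi_{\alpha+i\varepsilon}$ for $\varepsilon>0$, forces the boundary value $\omega_1(\alpha)$ to be the Denjoy-Wolff point of $\varphi_\alpha$. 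The exceptional case $\varphi_\alpha\equiv\mathrm{id}$ would force $h_2(\alpha+h_1(z))\equiv z-\alpha$, so $h_1$ and $h_2$ are mutually inverse M\"obius transformations up to an affine shift by $\alpha$. But by the Nevanlinna representation, $h_{\mu_j}$ is a M\"obius transformation iff the representing measure $\sigma_{\mu_j}$ is a single atom, which occurs iff $\mu_j$ is supported on exactly two points. A direct computation with the resulting rational functions then shows that there is at most one such exceptional $\alpha\in\mathbb{R}$, and it exists only when this two-atom condition holds for both measures. Handling this case analysis rigorously, and showing that the continuous extension $\omega_1(\alpha)$ is still well defined (in fact arbitrary on the fixed line of $\varphi_\alpha$), is where I expect the bulk of the technical work to lie.
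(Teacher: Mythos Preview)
The paper does not prove this theorem at all: it is stated as a compilation of known results, with the sentence immediately preceding the statement attributing it to \cite{voic-fish1,biane,voic-coalg} for the basic subordination and to \cite{serb-boundedness,serb-leb,bb-new-approach,BBH} for the continuous boundary extension and the Denjoy--Wolff description. No proof or proof sketch is given in the paper; the theorem is used as a black box in the subsequent arguments.

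Your sketch is broadly faithful to how those cited sources actually proceed (fixed-point construction on $\mathbb{H}$ from \cite{bb-new-approach}, boundary extension and convergence of Denjoy--Wolff points from \cite{BBH}), so in that sense it is the ``right'' proof outline. But since there is no in-paper proof to compare against, the appropriate response here is simply to cite the literature as the paper does, rather than to reprove the result. If you do want to include a sketch, two points deserve more care than you give them: first, the claim that $\Im\varphi_\alpha(z)\ge\Im\alpha$ alone forces an interior fixed point needs the observation that this horoball is hyperbolically bounded away from $\partial\mathbb{H}$ at the relevant boundary point (or an Earle--Hamilton argument after passing to the disk); second, the passage ``continuity of $\omega_1$ at $\alpha$ \dots\ forces $\omega_1(\alpha)$ to be the Denjoy--Wolff point of $\varphi_\alpha$'' is precisely the nontrivial content of \cite{BBH} and should not be presented as a routine limit.
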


An immediate consequence of this result is that the reciprocal Cauchy
transform $F_{\mu}$ also extends continuously to a map from $\mathbb{H}\cup\mathbb{R}$
to $\mathbb{H}\cup\mathbb{R}\cup\{\infty\}$. We also note that 
\[
\omega_{j}(z)=\gamma_{j}+z+\int_{\mathbb{R}}\frac{1+zt}{t-z}\,d\rho_{j}(t),\quad z\in\mathbb{H},j=1,2,
\]
where $\gamma_{j}\in\mathbb{R}$ and $\sigma_{j}$ is a finite, positive
Borel measure on $\mathbb{R}.$ The identity in part (2) of the theorem
yields the equalities
\begin{equation}
c_{\mu}=\gamma_{1}+\gamma_{2},\quad\sigma_{\mu}=\rho_{1}+\rho_{2}.\label{c=00003Dgamma1+gamma2}
\end{equation}
 In particular, $\sigma_{\mu}(\{t\})=\rho_{1}(\{t\})+\rho_{2}(\{t\})$
for every pole $t$ of $F_{\mu}$.

A calculation equivalent to the following lemma is found in \cite{lehner}.
\begin{lem}
\label{lem:F'-1=00003DL2norm}Suppose that $x\in\widetilde{\mathcal{A}}$
is selfadjoint, $\nu$ is the distribution of $x$, and $t\in\mathbb{R}\backslash{\rm supp}(\nu)$
is such that $G_{\nu}(t)\ne0$. Define $b=G_{\nu}(t)(t1_{\mathcal{A}}-x)$
and $a=b^{-1}-1_{\mathcal{A}}$. Then $\|a\|_{2}^{2}=F_{\nu}'(t)-1$.
\end{lem}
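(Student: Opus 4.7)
The plan is straightforward functional calculus with the spectral measure $\nu$ of $x$, exploiting two basic facts: that $G_{\nu}(t)$ is real for $t\in\mathbb{R}\setminus\mathrm{supp}(\nu)$, and that $F_{\nu}=1/G_{\nu}$.

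First I would observe that $b$ is selfadjoint. Indeed, $t1_{\mathcal{A}}-x$ is selfadjoint, and $G_{\nu}(t)=\int(t-s)^{-1}\,d\nu(s)$ is a real number since $t\notin\mathrm{supp}(\nu)$; hence $b^{*}=\overline{G_{\nu}(t)}(t1_{\mathcal{A}}-x)=b$, and consequently $a=b^{-1}-1_{\mathcal{A}}$ is also selfadjoint. The hypothesis $G_{\nu}(t)\ne 0$ ensures that $b$ (and thus $a$) is well defined in $\widetilde{\mathcal{A}}$.

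Next I would compute $\tau(b^{-1})$. Since $b^{-1}=G_{\nu}(t)^{-1}(t1_{\mathcal{A}}-x)^{-1}$, we get
\[
\tau(b^{-1})=\frac{1}{G_{\nu}(t)}\int_{\mathbb{R}}\frac{d\nu(s)}{t-s}=\frac{G_{\nu}(t)}{G_{\nu}(t)}=1,
\]
so in particular $\tau(a)=0$. Using selfadjointness of $a$, this yields
\[
\|a\|_{2}^{2}=\tau(a^{2})=\tau(b^{-2})-2\tau(b^{-1})+1=\tau(b^{-2})-1.
\]

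Finally, I would identify $\tau(b^{-2})$ with $F_{\nu}'(t)$. On the one hand, functional calculus gives
\[
\tau(b^{-2})=\frac{1}{G_{\nu}(t)^{2}}\int_{\mathbb{R}}\frac{d\nu(s)}{(t-s)^{2}}=-\frac{G_{\nu}'(t)}{G_{\nu}(t)^{2}},
\]
where differentiation under the integral sign is justified because $t$ lies in an open set where $G_{\nu}$ is analytic. On the other hand, $F_{\nu}'(t)=-G_{\nu}'(t)/G_{\nu}(t)^{2}$ by the quotient rule applied to $F_{\nu}=1/G_{\nu}$. Combining these, $\|a\|_{2}^{2}=F_{\nu}'(t)-1$, as claimed. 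There is no real obstacle here; the only subtle point to flag is the reality of $G_{\nu}(t)$ off the support, which is what makes $b$ selfadjoint and permits the clean computation $\|a\|_{2}^{2}=\tau(a^{2})$ rather than the more cumbersome $\tau(a^{*}a)$ with conjugates on $G_{\nu}(t)$.
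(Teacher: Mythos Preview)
Your proof is correct and follows essentially the same route as the paper: both arguments reduce $\|a\|_{2}^{2}$ via functional calculus to $G_{\nu}(t)^{-2}\int (t-s)^{-2}\,d\nu(s)-1$ and identify this with $-G_{\nu}'(t)/G_{\nu}(t)^{2}-1=F_{\nu}'(t)-1$. The only cosmetic difference is that the paper phrases the intermediate quantity as the variance of $s\mapsto 1/(t-s)$ in $L^{2}(\nu)$, whereas you expand $\tau(a^{2})=\tau(b^{-2})-2\tau(b^{-1})+1$ algebraically; these are the same computation.
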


\begin{proof}
We have
\begin{align*}
\|a\|_{2}^{2} & =\int_{\mathbb{R}}\left|\frac{1}{G_{\nu}(t)(t-s)}-1\right|^{2}d\nu(s)\\
 & =\frac{1}{G_{\nu}(t)^{2}}\int_{\mathbb{R}}\left|\frac{1}{t-s}-\int_{\mathbb{R}}\frac{1}{t-\tau}d\nu(\tau)\right|^{2}d\nu(s),
\end{align*}
and the last integral represents the variance of the random variable
$s\mapsto1/(t-s)$ in $L^{2}(\nu)$. Now,
\begin{align*}
F_{\nu}'(t)-1 & =\frac{-G_{\nu}'(t)}{G_{\nu}(t)^{2}}-1\\
 & =\frac{1}{G_{\nu}(t)^{2}}\left[\int_{\mathbb{R}}\frac{1}{(t-s)^{2}}d\nu(s)-\left[\int_{\mathbb{R}}\frac{1}{t-s}d\nu(s)\right]^{2}\right],
\end{align*}
and this is again $1/G_{\nu}(t)^{2}$ times the variance of $s\mapsto1/(t-s)$.
\end{proof}
In the following result and in its proof we use the notation established
before for subordination functions and their Nevanlinna representations.
\begin{prop}
\label{pro:omega maps resolvent to res} Let $\mu_{1}$ and $\mu_{2}$
be Borel probability measures on $\mathbb{R}$, and set $\mu=\mu_{1}\boxplus\mu_{2}$.
Suppose that $J=(\alpha,\beta)\subset\mathbb{R}$ is disjoint from
${\rm supp}(\mu)$ and that $F_{\mu}$ is finite at every point of
$J$. Then, for $j=1,2$, we have\emph{:}
\begin{enumerate}
\item the function $\omega_{j}$ is finite and real valued on $J$,  $\omega_{j}(J)$
is disjoint from ${\rm supp}(\mu_{j})$, and
\item $(F_{\mu_{1}}'(\omega_{1}(t))-1)(F'_{\mu_{2}}(\omega_{2}(t))-1)<1$
for $t\in J$.
\end{enumerate}
\end{prop}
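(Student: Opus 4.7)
For part (1), the starting observation is that since $J\cap\text{supp}(\mu)=\varnothing$ and $F_\mu$ has no poles on $J$, the interval $J$ is disjoint from $\text{supp}(\sigma_\mu)$; because $\sigma_\mu = \rho_1 + \rho_2$, the same holds for each $\rho_j$. From the Nevanlinna representations this gives that $\omega_j$ and $F_\mu$ extend analytically to a complex neighborhood of $J$ with real values on $J$, and that $\omega_j'(t) \geq 1$ on $J$, so $\omega_j$ is a local biholomorphism at each $t_0 \in J$. The key step is then to extend the subordination identity $F_\mu = F_{\mu_j} \circ \omega_j$ from $\mathbb{H}$ to a complex neighborhood of $J$ and invert $\omega_j$ locally, in order to deduce that $F_{\mu_j}$ itself extends analytically across $\omega_j(J)$; Stieltjes inversion applied to the Nevanlinna representation of $F_{\mu_j}$ then shows $\sigma_{\mu_j}$ puts no mass on such a neighborhood, so $\omega_j(t_0) \notin \text{supp}(\sigma_{\mu_j})$. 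The only way $\omega_j(t_0)$ could still lie in $\text{supp}(\mu_j)$ is as an isolated atom, but then $F_{\mu_j}(\omega_j(t_0))=0$, forcing $F_\mu(t_0)=0$ and an atom of $\mu$ at $t_0$, contradicting $t_0 \in J$.

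For part (2), I realize $\mu_1, \mu_2$ as distributions of $*$-free selfadjoint operators $x_1, x_2 \in \widetilde{\mathcal{A}}$ in a tracial $W^*$-probability space $(\mathcal{A}, \tau)$. For each $z$ in the connected set $X = \mathbb{H}\cup J$, set
\[
a_j(z) = \frac{1}{G_\mu(z)}(\omega_j(z) - x_j)^{-1} - 1_{\mathcal{A}}, \qquad b_j(z) = (a_j(z) + 1_{\mathcal{A}})^{-1}.
\]
Part (1) together with $\omega_j(\mathbb{H}) \subset \mathbb{H}$ ensure that $a_j(z)$ and $b_j(z)^{-1}$ lie in $\mathcal{A}$ and depend continuously on $z$ in operator norm; a direct computation using $\tau((\omega_j(z)-x_j)^{-1}) = G_{\mu_j}(\omega_j(z)) = G_\mu(z)$ gives $\tau(a_j(z)) = 0$, and $a_1(z), a_2(z)$ are $*$-free as functions of $x_j$. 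The identity $\omega_1(z) + \omega_2(z) - z = 1/G_\mu(z)$ from Theorem \ref{thm:boxplus subordination} yields $1_\mathcal{A} - b_1(z) - b_2(z) = -G_\mu(z)(z - (x_1+x_2))$, and Remark \ref{rem:a bit of algebra} then shows that $1_\mathcal{A} - a_1(z) a_2(z)$ is invertible in $\mathcal{A}$ throughout $X$, because $z - (x_1+x_2)$ is boundedly invertible for every $z\in X$.

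Corollary \ref{cor: clopen} now shows that $Y = \{z \in X : \|a_1(z)\|_2 \|a_2(z)\|_2 < 1\}$ is clopen in the connected space $X$, and combining $\omega_j(iy)/iy \to 1$ with Lemma \ref{lem:2-norm of centered resolvents} gives $\|a_j(iy)\|_2 \to 0$ as $y \uparrow \infty$, so $Y \neq \varnothing$ and hence $Y = X$. For $t \in J$, Lemma \ref{lem:F'-1=00003DL2norm} applied to $x_j$ at $\omega_j(t)$ identifies $\|a_j(t)\|_2^2 = F_{\mu_j}'(\omega_j(t)) - 1$, yielding the inequality in (2). I expect the main obstacle to be the analytic continuation step in part (1): promoting the analytic extension of $F_{\mu_j}\circ\omega_j$ across $J$ into one of $F_{\mu_j}$ itself across $\omega_j(J)$ via the local biholomorphism $\omega_j$, and separately ruling out that $\omega_j(J)$ meets the isolated atoms of $\mu_j$.
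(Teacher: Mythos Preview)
Your proposal is correct and follows essentially the same approach as the paper: part (1) uses $\sigma_\mu=\rho_1+\rho_2$ to get $\omega_j$ real-analytic on $J$, then the local conformality of $\omega_j$ to transfer analyticity of $F_\mu$ to $F_{\mu_j}$ on $\omega_j(J)$, and part (2) uses the same operators $a_j(\lambda),b_j(\lambda)$ together with Remark~\ref{rem:a bit of algebra}, Corollary~\ref{cor: clopen}, Lemma~\ref{lem:2-norm of centered resolvents}, and Lemma~\ref{lem:F'-1=00003DL2norm}. Your explicit handling of the isolated-atom case in part (1) (via $F_\mu(t_0)=0$ forcing $t_0\in\mathrm{supp}(\mu)$) is a detail the paper leaves implicit, but otherwise the arguments coincide.
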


\begin{proof}
Since $F_{\mu}$ is real-valued on $J$, we have $\sigma_{\mu}(J)=0$.
The second equality in (\ref{c=00003Dgamma1+gamma2}) shows that $\rho_{1}(J)=\rho_{2}(J)=0$
as well, and thus $\omega_{1}$ and $\omega_{2}$ are finite and real-valued
on $J$. Since $\omega'_{j}(t)\ge1$ for $t\in J$, $\omega_{j}$
maps $J$ bijectively to an open interval $J_{j}\subset\mathbb{R}$
and it extends to a conformal map of some open neighborhood of $J$
onto an open neighborhood of $J_{j}$. The subordination equation
\[
F_{\mu}(\lambda)=F_{\mu_{j}}(\omega_{j}(\lambda)),\quad j=1,2,\ \lambda\in\mathbb{H},
\]
implies that $F_{\mu_{j}}$ extends continuously to the interval $J_{j}$
and the extension is real-valued and finite on that interval. Therefore,
\[
\mu_{j}(J_{j})=\sigma_{\mu_{j}}(J_{j})=0,
\]
thus concluding the proof of (1).

To prove (2), it is convenient to consider two freely independent
(generally unbounded) selfadjoint operators $x_{1}$ and $x_{2},$
affiliated with $(\mathcal{A},\tau),$ whose distributions are $\mu_{1}$
and $\mu_{2}$, respectively. Then $x=x_{1}+x_{2}$ has distribution
$\mu$. For $\lambda\in\mathbb{H}\cup J$, we set
\[
b_{j}(\lambda)=G_{\mu}(\lambda)(\omega_{j}(\lambda)1_{\mathcal{A}}-x_{j}),\ a_{j}(\lambda)=b_{j}(\lambda)^{-1}-1_{\mathcal{A}},\quad j=1,2.
\]
We observe that
\[
G_{\mu}(\lambda)=G_{\mu_{j}}(\omega_{j}(\lambda))=\tau((\omega_{j}(\lambda)1_{\mathcal{A}}-x_{j})^{-1}),\quad j=1,2,\ \lambda\in\mathbb{H}\cup J,
\]
and this places us in the context of Remark \ref{rem:a bit of algebra},
that is, $\tau(a_{j}(\lambda))=0$. Observe that
\begin{align*}
b_{1}(\lambda)+b_{2}(\lambda)-1 & =G_{\mu}(\lambda)\left[\left(\omega_{1}(\lambda)+\omega_{2}(\lambda)-\frac{1}{G_{\mu}(\lambda)}\right)1_{\mathcal{A}}-x_{1}-x_{2}\right]\\
 & =G_{\mu}(\lambda)(\lambda1_{\mathcal{A}}-x_{1}-x_{2})
\end{align*}
is invertible for $\lambda\in\mathbb{H}\cup J$, and thus $1-a_{1}(\lambda)a_{2}(\lambda)$
is also invertible, for every $\lambda\in\mathbb{H}\cup J$. Moreover,
$\|a_{j}(is)\|_{2}<1$ for large $s>0$. This follows from Lemma \ref{lem:2-norm of centered resolvents}
because $\omega_{j}(iy)$ tends to $\infty$ nontangentially as $y\uparrow+\infty$.
Since the maps $\lambda\mapsto a_{j}(\lambda)$ are continuous, Corollary
\ref{cor: clopen} implies that
\[
\|a_{1}(\lambda)\|_{2}\|a_{2}(\lambda)\|_{2}<1,\quad\lambda\in\mathbb{H}\cup J.
\]
 To conclude the proof of (2), we observe that
\[
\|a_{j}(t)\|_{2}^{2}=F_{\mu_{j}}'(\omega_{j}(t))-1,\quad j=1,2,t\in J,
\]
by Lemma \ref{lem:F'-1=00003DL2norm}.
\end{proof}
A converse of the above result holds as well.
\begin{prop}
\label{prop:if(F'-1)(F'-1)<1} Let $\mu_{1}$ and $\mu_{2}$ be Borel
probability measures on $\mathbb{R}$, neither of which is a point
mass, and set $\mu=\mu_{1}\boxplus\mu_{2}$. Suppose that the points
$t_{j}\in\mathbb{R}\backslash{\rm supp}(\mu_{j})$ are such that $G_{\mu_{1}}(t_{1})=G_{\mu_{2}}(t_{2})\ne0$
and $(F_{\mu_{1}}'(t_{1})-1)(F_{\mu_{2}}'(t_{2})-1)<1$. Then the
point
\[
t=t_{1}+t_{2}-F_{\mu_{1}}(t_{1})
\]
belongs to $\mathbb{R}\backslash{\rm supp}(\mu)$, $F_{\mu}(t)=F_{\mu_{j}}(t_{j}),$
and $\omega_{j}(t)=t_{j}$ for $j=1,2$.
\end{prop}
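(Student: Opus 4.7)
The plan is to invert the argument of Proposition \ref{pro:omega maps resolvent to res}. Write $g = G_{\mu_1}(t_1) = G_{\mu_2}(t_2)$ (which is real and nonzero), set $s := 1/g = F_{\mu_1}(t_1) = F_{\mu_2}(t_2)$, so that $t = t_1 + t_2 - s$. Fix $*$-free selfadjoint operators $x_1, x_2$ affiliated with a tracial $W^*$-probability space $(\mathcal A, \tau)$, with distributions $\mu_1, \mu_2$; let $x = x_1 + x_2$, whose distribution is $\mu$. Put
\[
b_j = g(t_j 1_{\mathcal A} - x_j), \qquad a_j = b_j^{-1} - 1_{\mathcal A}, \qquad j = 1, 2.
\]
Each $b_j$ is bounded invertible because $t_j \notin \mathrm{supp}(\mu_j)$, and $\tau(b_j^{-1}) = 1$, so $\tau(a_j) = 0$. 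Lemma \ref{lem:F'-1=00003DL2norm} then gives $\|a_j\|_2^2 = F_{\mu_j}'(t_j) - 1$, and the hypothesis translates into $\|a_1\|_2 \|a_2\|_2 < 1$.

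First I would show that $t \notin \mathrm{supp}(\mu)$. By the Haagerup--Larsen formula recalled at the start of Section \ref{sec:prelim}, the spectral radius of the centered free product $a_1 a_2$ equals $\|a_1\|_2 \|a_2\|_2 < 1$, so $1_{\mathcal A} - a_1 a_2$ is invertible in $\mathcal A$. The algebraic identity of Remark \ref{rem:a bit of algebra} yields
\[
b_1 (1_{\mathcal A} - a_1 a_2) b_2 = b_1 + b_2 - 1_{\mathcal A},
\]
and a short computation using $s = 1/g$ and $t = t_1 + t_2 - s$ gives $b_1 + b_2 - 1_{\mathcal A} = g(t 1_{\mathcal A} - x)$. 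Since $b_1$, $b_2$, and $1_{\mathcal A} - a_1 a_2$ all have bounded inverses, so does $t 1_{\mathcal A} - x$, and therefore $t \in \mathbb{R} \setminus \sigma(x) = \mathbb{R} \setminus \mathrm{supp}(\mu)$.

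To identify the subordination values I would invoke the Denjoy--Wolff characterization of Theorem \ref{thm:boxplus subordination}(4). Writing $h_j = F_{\mu_j} - \mathrm{id}$, put $\varphi_t(z) = t + h_2(t + h_1(z))$. Since $t_1 \notin \mathrm{supp}(\mu_1)$, the function $h_1$ is analytic in a complex neighborhood of $t_1$; because $t + h_1(t_1) = t_1 + t_2 - s + (s - t_1) = t_2 \notin \mathrm{supp}(\mu_2)$, the function $h_2$ is analytic near $t_2$, so $\varphi_t$ extends analytically to a neighborhood of $t_1$. A direct substitution gives $\varphi_t(t_1) = t_1$, and the chain rule combined with the Nevanlinna formula $F_{\mu_j}' \ge 1$ yields
\[
\varphi_t'(t_1) = h_1'(t_1) h_2'(t_2) = (F_{\mu_1}'(t_1) - 1)(F_{\mu_2}'(t_2) - 1) \in [0, 1).
\]
Thus $\varphi_t$ is a strict local contraction at $t_1$: for any $z \in \mathbb H$ near $t_1$ the iterates $\varphi_t^n(z)$ converge to $t_1$, so $t_1$ is the Denjoy--Wolff point of $\varphi_t$ and $\omega_1(t) = t_1$. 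The remaining identities follow at once: $F_\mu(t) = F_{\mu_1}(\omega_1(t)) = F_{\mu_1}(t_1) = s$, and then $\omega_2(t) = t + F_\mu(t) - \omega_1(t) = t_2$.

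The main obstacle I anticipate is the Denjoy--Wolff step: one must be careful to exclude a competing interior fixed point of $\varphi_t$ and to address the exceptional case in Theorem \ref{thm:boxplus subordination}(4) in which $\varphi_t$ is the identity map. The local contraction argument above handles the first issue since iterates of nearby points cannot simultaneously converge to an interior point and to $t_1$; the exceptional case only arises when both $\mu_1$ and $\mu_2$ are two-point measures, and in that explicit setting the conclusion can be verified by hand.
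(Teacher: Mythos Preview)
Your argument is correct and matches the paper's proof almost exactly: both use Lemma~\ref{lem:F'-1=00003DL2norm} to translate the hypothesis into $\|a_1\|_2\|a_2\|_2<1$, then invoke the Haagerup--Larsen spectral radius together with the factorization of Remark~\ref{rem:a bit of algebra} to get $t\notin\mathrm{supp}(\mu)$, and verify $\omega_j(t)=t_j$ via the fixed-point/derivative computation for $\varphi_t$ from Theorem~\ref{thm:boxplus subordination}(4). The only difference is that you run the two halves in the opposite order (invertibility first, Denjoy--Wolff second), which lets you define $a_j,b_j$ directly in terms of $t_j$ rather than via $\omega_j(t)$; the paper instead identifies $\omega_j(t)=t_j$ first and then quotes the operator setup from the proof of Proposition~\ref{pro:omega maps resolvent to res}.
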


\begin{proof}
Recall that $\omega_{1}(t)$ is the Denjoy-Wolff point of the map
$\varphi_{t}:\mathbb{H}\to\mathbb{H}$ defined by
\[
\varphi_{t}(\lambda)=t+h_{2}(t+h_{1}(\lambda)),\quad\lambda\in\mathbb{H},
\]
where $h_{j}(\lambda)=F_{\mu_{j}}(\lambda)-\lambda$ for $j=1,2$.
Since $t_{j}\notin{\rm supp}(\mu_{j})$ and $G_{\mu_{j}}(t_{j})\ne0$,
the function $h_{j}$ continues analytically to a neighborhood of
$t_{j}$. We note first that $t_{1}$ is a fixed point for $\varphi_{t}$.
Indeed, 
\[
t+h_{1}(t_{1})=t+F_{\mu_{1}}(t_{1})-t_{1}=t_{2}
\]
by the definition of $t$, and 
\[
\varphi_{t}(t_{1})=t+h_{2}(t+h_{1}(t_{1}))=t+h_{2}(t_{2})=t_{1}
\]
by a similar calculation. The fact that $h_{j}$ continues analytically
to $t_{j}$ allows us to calculate the Carath\'eodory-Julia derivative
of $\varphi_{t}$ at the point $t_{1}$ as an ordinary derivative:
\begin{align*}
\varphi_{t}'(t_{1}) & =h_{2}'(t+h_{1}(t_{1}))h_{1}'(t_{1})=h_{2}'(t_{2})h_{1}'(t_{1})\\
 & =(F_{\mu_{2}}'(t_{2})-1)(F_{\mu_{1}}'(t_{1})-1).
\end{align*}
By hypothesis, this number is less than one, so we conclude that $\omega_{1}(t)=t_{1}$.
The equality $\omega_{2}(t)=t_{2}$ follows by switching the indices
$1$ and $2$ in the argument above.

In order to verify that $t\notin{\rm supp}(\mu)$, it is convenient
to consider freely independent selfadjoint random variables $x_{1},x_{2}\in\widetilde{\mathcal{A}}$
whose distributions are $\mu_{1}$ and $\mu_{2}$, respectively. Using
the notation in the proof of Proposition \ref{pro:omega maps resolvent to res},
we see that $\|a_{1}(t)\|_{2}\|a_{2}(t)\|_{2}<1$. The bounded invertibility
of $t1_{\mathcal{A}}-(x_{1}+x_{2})$ follows then from Remark \ref{rem:a bit of algebra}
and the proposition follows because ${\rm supp}(\mu)=\sigma(x_{1}+x_{2})$. 
\end{proof}
We summarize the results of Propositions \ref{pro:omega maps resolvent to res}
and \ref{prop:if(F'-1)(F'-1)<1} as follows.
\begin{thm}
\label{thm:resolvent of sum in terms of pairs}Let $\mu_{1}$ and
$\mu_{2}$ be Borel probability measures on $\mathbb{R}$, neither
of which is a point mass, and set $\mu=\mu_{1}\boxplus\mu_{2}$. Consider
the sets
\[
A_{\mu}=\{t\in\mathbb{R}\backslash{\rm supp}(\mu):G_{\mu}(t)\ne0\}
\]
and 
\begin{align*}
B_{\mu_{1},\mu_{2}}=\{(t_{1},t_{2})\in A_{\mu_{1}}\times A_{\mu_{2}}: & G_{\mu_{1}}(t_{1})=G_{\mu_{2}}(t_{2})\ne0\\
 & \text{ and }(F_{\mu_{1}}'(t_{1})-1)(F_{\mu_{2}}'(t_{2})-1)<1\}.
\end{align*}
Then the map $t\mapsto(\omega_{1}(t),\omega_{2}(t))$ is a homeomorphism
of $A_{\mu}$ onto $B_{\mu_{1},\mu_{2}}$ with inverse given by 
\[
(t_{1},t_{2})\mapsto t_{1}+t_{2}-F_{\mu_{1}}(t_{1})=t_{1}+t_{2}-F_{\mu_{2}}(t_{2}).
\]
 
\end{thm}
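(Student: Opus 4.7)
The theorem is essentially the juxtaposition of Propositions \ref{pro:omega maps resolvent to res} and \ref{prop:if(F'-1)(F'-1)<1}, so my plan is to verify that these two results fit together as a mutually inverse pair of continuous bijections. I would introduce the candidate maps $\Phi\colon A_{\mu}\to A_{\mu_{1}}\times A_{\mu_{2}}$, $\Phi(t)=(\omega_{1}(t),\omega_{2}(t))$, and $\Psi\colon B_{\mu_{1},\mu_{2}}\to\mathbb{R}$, $\Psi(t_{1},t_{2})=t_{1}+t_{2}-F_{\mu_{1}}(t_{1})$; the equality $G_{\mu_{1}}(t_{1})=G_{\mu_{2}}(t_{2})$ in the definition of $B_{\mu_{1},\mu_{2}}$ forces $F_{\mu_{1}}(t_{1})=F_{\mu_{2}}(t_{2})$, so $\Psi$ coincides with $(t_{1},t_{2})\mapsto t_{1}+t_{2}-F_{\mu_{2}}(t_{2})$ and the formula advertised in the statement is consistent.

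First I would show $\Phi(A_{\mu})\subset B_{\mu_{1},\mu_{2}}$. Given $t\in A_{\mu}$, I choose a small open interval $J$ around $t$ on which $F_{\mu}$ is finite (possible since $G_{\mu}(t)\ne0$ and poles of $F_{\mu}$ are isolated in $\mathbb{R}\setminus{\rm supp}(\mu)$) and apply Proposition \ref{pro:omega maps resolvent to res}: each $\omega_{j}(t)$ is real, lies in $\mathbb{R}\setminus{\rm supp}(\mu_{j})$, and the derivative inequality holds. The subordination identity $F_{\mu_{j}}(\omega_{j}(t))=F_{\mu}(t)$ from Theorem \ref{thm:boxplus subordination}(2) then gives $G_{\mu_{j}}(\omega_{j}(t))=G_{\mu}(t)\ne0$, so $\omega_{j}(t)\in A_{\mu_{j}}$ and the common-value condition defining $B_{\mu_{1},\mu_{2}}$ is met. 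Conversely, Proposition \ref{prop:if(F'-1)(F'-1)<1} applied to $(t_{1},t_{2})\in B_{\mu_{1},\mu_{2}}$ says exactly that $t:=\Psi(t_{1},t_{2})\in\mathbb{R}\setminus{\rm supp}(\mu)$ with $F_{\mu}(t)=F_{\mu_{j}}(t_{j})$ and $\omega_{j}(t)=t_{j}$; the second of these gives $G_{\mu}(t)=G_{\mu_{j}}(t_{j})\ne0$, placing $t\in A_{\mu}$, while the third is precisely the identity $\Phi\circ\Psi={\rm id}_{B_{\mu_{1},\mu_{2}}}$. The reverse identity $\Psi\circ\Phi={\rm id}_{A_{\mu}}$ is then immediate from the subordination equation $F_{\mu}(t)=\omega_{1}(t)+\omega_{2}(t)-t$ in Theorem \ref{thm:boxplus subordination}(2), rewritten as $t=\omega_{1}(t)+\omega_{2}(t)-F_{\mu_{1}}(\omega_{1}(t))$.

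Finally, continuity of $\Phi$ follows from the continuity of $\omega_{1},\omega_{2}$ on $\mathbb{H}\cup\mathbb{R}$ guaranteed by Theorem \ref{thm:boxplus subordination}, while $\Psi$ is continuous because $F_{\mu_{1}}$ is meromorphic on $\mathbb{C}\setminus{\rm supp}(\mu_{1})$ and in particular continuous and finite on $A_{\mu_{1}}$. The main obstacle has already been cleared in the two preceding propositions; here the only genuine bookkeeping is the equivalence between non-vanishing of $G_{\mu}$ at $t$ and non-vanishing of $G_{\mu_{j}}$ at $\omega_{j}(t)$, which is immediate from $F_{\mu}(t)=F_{\mu_{j}}(\omega_{j}(t))$, together with the symmetry observation that makes $\Psi$ well defined.
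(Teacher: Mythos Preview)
Your proposal is correct and follows exactly the approach of the paper: the theorem is stated there as a summary of Propositions~\ref{pro:omega maps resolvent to res} and~\ref{prop:if(F'-1)(F'-1)<1}, and you have accurately supplied the routine bookkeeping (well-definedness of $\Psi$, the two inverse identities via the subordination equation, and continuity) that the paper leaves implicit.
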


Suppose that $\nu$ is a Borel probability measure on $\mathbb{R}$
such that ${\rm supp}(\nu)$ has $N<+\infty$ components. Then the
complement $\mathbb{R}\backslash{\rm supp}(\nu)$ has at most $N+1$
connected components, all of them bounded with at most two exceptions.
The function $t\mapsto G_{\mu}(t)$ is decreasing on each of these
components and it is nonzero on the unbounded components---positive
on a component of the form $(\alpha,+\infty)$ and negative on a component
of the form $(-\infty,\alpha)$. Thus, $G_{\nu}$ has at most $N-1$
zeros, possibly one in each bounded component, and the set $A_{\nu}$
(defined in the statement of Theorem \ref{thm:resolvent of sum in terms of pairs})
has at most $2N$ components. On each of these components the function
$F_{\nu}$ is increasing and it does not change sign. More precisely,
there are at most $N$ components of $A_{\nu}$ on which $F_{\nu}$
is positive and at most $N$ components on which $F_{\nu}$ is negative.
Suppose now that $\mu_{1}$ and $\mu_{2}$ are two probability measures
on $\mathbb{R}$ such that ${\rm supp}(\mu_{j})$ has $n_{j}<+\infty$
connected components. Then the set
\[
C_{\mu_{1},\mu_{2}}=\{(t_{1},t_{2})\in A_{\mu_{1}}\times A_{\mu_{2}}:G_{\mu_{1}}(t_{1})=G_{\mu_{2}}(t_{2})\}
\]
 is a disjoint union of at most $2n_{1}n_{2}$ smooth curves. Indeed,
the equality $G_{\mu_{1}}(t_{1})=G_{\mu_{2}}(t_{2})$ implies in particular
that these two numbers must have the same sign. The set $B_{\mu_{1},\mu_{2}}$
is, of course, a relatively open subset of $C_{\mu_{1},\mu_{2}}$,
and this is easily analyzed when $n_{1}=n_{2}=1$.
\begin{thm}
\label{thm:connected supports}Let $\mu_{1}$ and $\mu_{2}$ be Borel
probability measures on $\mathbb{R}$, and set $\mu=\mu_{1}\boxplus\mu_{2}$.
If ${\rm supp}(\mu_{1})$ and ${\rm supp}(\mu_{2})$ are connected
then so is ${\rm supp}(\mu)$. If the support of either $\mu_{1}$
or $\mu_{2}$ is not bounded below \emph{(}respectively, above\emph{)},
then the support of $\mu$ is not bounded below \emph{(}respectively,
above\emph{).}
\end{thm}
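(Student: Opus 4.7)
Our plan is to exploit the homeomorphism of Theorem~\ref{thm:resolvent of sum in terms of pairs} to reduce counting the connected components of $A_\mu$ to an analysis of open subsets of the explicit curves making up $C_{\mu_1,\mu_2}$. When ${\rm supp}(\mu_1)$ and ${\rm supp}(\mu_2)$ are both connected, each $A_{\mu_j}$ consists of at most two unbounded half-lines (no bounded gaps arise, and $G_{\mu_j}$ has no zeros in its complement), so $C_{\mu_1,\mu_2}$ splits into at most two smooth arcs: an arc $C^+\subset(b_1,\infty)\times(b_2,\infty)$ parametrized bijectively by $s=G_{\mu_1}(t_1)=G_{\mu_2}(t_2)>0$, together with its mirror image $C^-$. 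We may assume that neither $\mu_j$ is a point mass, since otherwise $\mu$ is a translate of the other measure and already has connected support.

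Assume first that both supports are bounded. The key observation is that every component $J=(c,d)$ of $A_\mu\cap\{G_\mu>0\}$ is an interval on which $G_\mu$ is strictly decreasing, and its right endpoint $d$ is either $+\infty$ or the unique zero of $G_\mu$ inside some bounded complement gap of ${\rm supp}(\mu)$; in either case $G_\mu(d^-)=0$. Consequently $G_\mu(J)=(0,G_\mu(c^+))$ is an interval anchored at $0$. The homeomorphism of Theorem~\ref{thm:resolvent of sum in terms of pairs} sends $J$ bijectively onto a component of $B^+:=B_{\mu_1,\mu_2}\cap C^+$; and because the $s$-coordinate of $(\omega_1(t),\omega_2(t))$ equals $G_\mu(t)$, this image component occupies precisely the sub-arc of $C^+$ corresponding to $s\in(0,G_\mu(c^+))$ in the $s$-parametrization.

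If $A_\mu\cap\{G_\mu>0\}$ had two distinct components $J_1,J_2$, they would produce two disjoint components of $B^+$, each of the form $\{s\in(0,M^{(k)})\}$; but two such intervals necessarily overlap near $s=0$, a contradiction. Hence $A_\mu$ has a single component on each of its positive and negative sides. Since every bounded complement gap of ${\rm supp}(\mu)$ would contribute one component to each of these two sides, no such gap exists and ${\rm supp}(\mu)$ is connected.

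The unbounded statements follow by degeneration of the same argument. If ${\rm supp}(\mu_1)$ is not bounded above then $A_{\mu_1}$ contains no ray $(b_1,\infty)$, so $C^+=\emptyset$, which forces $A_\mu\cap\{G_\mu>0\}=\emptyset$; by the very observation that every bounded complement gap would contribute a positive-side component, ${\rm supp}(\mu)$ then has no bounded gap and no upper-unbounded complement, i.e.\ $\sup{\rm supp}(\mu)=+\infty$. The symmetric argument handles unboundedness below, and combining these with the bounded-case argument on whichever side remains bounded settles every case. The main delicate point will be verifying carefully that $G_\mu(d^-)=0$ at the right endpoint of every positive-side component of $A_\mu$; this reduces to the standard monotonicity and sign-changing behavior of $G_\mu$ in a bounded complement gap $(\alpha,\beta)$, where $G_\mu$ decreases strictly and continuously from a positive value (possibly $+\infty$) at $\alpha^+$ through a unique zero to a negative value (possibly $-\infty$) at $\beta^-$.
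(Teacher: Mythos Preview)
Your argument has a real gap at exactly the point you flag as ``the main delicate point.'' The assertion that $G_\nu$ decreases from a positive value through a unique zero to a negative value on every bounded gap of ${\rm supp}(\nu)$ is \emph{false} for general probability measures. For instance, take $\nu=(1-\varepsilon)\delta_{1}+\varepsilon\rho$ where $\rho$ has density $2(-1-s)$ on $[-2,-1]$; a direct computation gives $G_\nu((-1)^+)=-\tfrac{1-\varepsilon}{2}+2\varepsilon<0$ for small $\varepsilon$, so $G_\nu<0$ throughout the gap $(-1,1)$ and there is no zero. Consequently the right endpoint $d$ of a component of $A_\mu\cap\{G_\mu>0\}$ could in principle be a support point $\beta$ with $G_\mu(\beta^-)>0$, in which case the corresponding arc in $C^+$ has $s$-range $(G_\mu(\beta^-),G_\mu(\alpha^+))$ and is \emph{not} anchored at $s=0$; two such arcs can perfectly well be disjoint, and your contradiction evaporates. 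You offer no reason why this case is excluded for $\mu=\mu_1\boxplus\mu_2$, and invoking the absence of bounded gaps would be circular. The same unproven claim is used again in your treatment of the unbounded cases (``every bounded complement gap would contribute a positive-side component'').

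The paper's proof sidesteps this by analyzing $B_{\mu_1,\mu_2}$ directly rather than pulling information back from $A_\mu$. The Nevanlinna representation of $F_{\mu_j}$ shows $F''_{\mu_j}>0$ on $A_{\mu_j}^-=(-\infty,\alpha_j)$, so $F'_{\mu_j}-1$ is increasing there from its limit $0$ at $-\infty$. Since the curve $C_{\mu_1,\mu_2}\cap(A_{\mu_1}^-\times A_{\mu_2}^-)$ is the graph of an increasing function $u=F_{\mu_2}^{-1}\circ F_{\mu_1}$, the product $(F'_{\mu_1}(t_1)-1)(F'_{\mu_2}(u(t_1))-1)$ is monotone along it, and its sublevel set $\{<1\}$ is automatically an interval. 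This monotonicity-of-the-derivative argument is the missing ingredient; it uses only the hypothesis on $\mu_1,\mu_2$ and makes no assumption on the unknown structure of $A_\mu$.
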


\begin{proof}
The result is trivial if either $\mu_{1}$ or $\mu_{2}$ is a point
mass. Therefore, we may assume that this is not the case. For $j=1,2,$
denote by $A_{\mu_{j}}^{-}$ (respectively, $A_{\mu_{j}}^{+})$ the
(possibly empty) connected component of $A_{\mu_{j}}={\rm supp}(\mu_{j})$
on which $F_{\mu_{j}}$ is negative (respectively, positive). Using
the notation introduced in the proof of Theorem \ref{thm:resolvent of sum in terms of pairs},
\[
B_{\mu_{1},\mu_{2}}\subset(A_{\mu_{1}}^{-}\times A_{\mu_{2}}^{-})\cup(A_{\mu_{1}}^{+}\times A_{\mu_{2}}^{+}).
\]
We show that $B_{\mu_{1},\mu_{2}}\cap(A_{\mu_{1}}^{-}\times A_{\mu_{2}}^{-})$
is connected and nonempty if $A_{\mu_{1}}^{-}\times A_{\mu_{2}}^{-}\ne\varnothing$.
Thus, assume that $A_{\mu_{j}}^{-}=(-\infty,\alpha_{j})$ for some
$\alpha_{j}\in\mathbb{R}$, $j=1,2,$ and observe that the measure
$\sigma_{j}$ in the Nevanlinna representation
\[
F_{\mu_{j}}(\lambda)=c_{\mu_{j}}+\lambda+\int_{\mathbb{R}}\frac{1+\lambda t}{t-\lambda}\,d\sigma_{\mu_{j}}(t)
\]
is supported in $[\alpha_{j},+\infty)$. Therefore, $F_{\mu_{j}}$
continues analytically to $A_{\mu_{j}}^{-}$, and $F''_{\mu_{j}}>0$
on this interval. Therefore, $F'_{\mu_{j}}$ is increasing on $A_{\mu_{j}}^{-}$
and, obviously
\[
\lim_{t\downarrow-\infty}F'_{\mu_{j}}(t)=1.
\]
This ensures that $B_{\mu_{1},\mu_{2}}\cap(A_{\mu_{1}}^{-}\times A_{\mu_{2}}^{-})\ne\varnothing.$
Suppose now that $(s_{1},s_{2})$ and $(t_{1},t_{2})$ are two elements
of this intersection. For definiteness, we assume that $s_{1}<t_{1}.$
Since the function $F_{\mu_{1}}$ is increasing, we have
\[
F_{\mu_{2}}(t_{2})=F_{\mu_{1}}(t_{1})>F_{\mu_{1}}(s_{1})=F_{\mu_{2}}(s_{2}),
\]
 and thus $s_{2}<t_{2}$ because $F_{\mu_{2}}$ is increasing as well.
Thus $F_{\mu_{1}}$ and $F_{\mu_{2}}$ map the intervals $[s_{1},t_{1}]$
and $[s_{2},t_{2}]$ bijectively onto the same interval. Hence, the
set
\[
D=\{(\tau_{1},\tau_{2})\in A_{\mu_{1}}^{-}\times A_{\mu_{2}}^{-}:F_{\mu_{1}}(\tau_{1})=F_{\mu_{2}}(\tau_{2}),s_{2}\le\tau_{2}\le t_{2}\}
\]
can be described as the graph
\[
D=\{(\tau_{1},u(\tau_{1})):\tau_{1}\in[s_{1},t_{1}]\}
\]
of an increasing homeomorphism of $[s_{1},t_{1}]$ onto $[s_{2},t_{2}]$.
(Informally, we have $u=F_{\mu_{2}}^{-1}\circ F_{\mu_{1}}$.) Now,
we have
\[
(F'_{\mu_{1}}(\tau_{1})-1)(F'_{\mu_{2}}(u(\tau_{1}))-1)<1
\]
for $\tau_{1}\in\{s_{1},t_{1}\},$ and the fact that $F_{\mu_{j}}'$
is increasing implies that the inequality persists for every $\tau_{1}\in[s_{1},t_{1}]$.
This proves the connectedness of $B_{\mu_{1},\mu_{2}}\cap(A_{\mu_{1}}^{-}\times A_{\mu_{2}}^{-})$.
An analogous argument shows that $B_{\mu_{1},\mu_{2}}\cap(A_{\mu_{1}}^{+}\times A_{\mu_{2}}^{+})$
is connected as well. We conclude that $B_{\mu_{1},\mu_{2}}$ has
at most two connected components, and therefore the homeomorphic set
$A_{\mu}$ has the same property. It follows that ${\rm supp}(\mu)$
is connected.

If ${\rm supp}(\mu_{1})$ is not bounded below (respecively, above),
we have
\[
A_{\mu_{1}}^{-}\times A_{\mu_{2}}^{-}=\varnothing,
\]
(respectively, $A_{\mu_{1}}^{+}\times A_{\mu_{2}}^{+}=\varnothing$),
and therefore $\mathbb{R}\backslash{\rm supp}(\mu_{1}\boxplus\mu_{2})$
is bounded below (respectively, above). The theorem follows.
\end{proof}

\section{The analysis of free multiplicative convolution on $\mathbb{R}_{+}$\label{sec:mult-on-R}}

Given a Borel probability measure $\mu$ on $\mathbb{R}_{+}=[0,+\infty)$,
different from $\delta_{0}$, the function
\[
\psi_{\mu}(z)=\int_{\mathbb{R}_{+}}\frac{zt\,d\mu(t)}{1-zt},\quad z\in\mathbb{C}\backslash\mathbb{R}_{+},
\]
is analytic. We also consider the function
\[
\eta_{\mu}(z)=\frac{\psi_{\mu}(z)}{1+\psi_{\mu}(z)},\quad z\in\mathbb{C}\backslash\mathbb{R}_{+},
\]
and we denote by ${\rm Eta}_{\mathbb{R}_{+}}$the collection of all
functions of the form $\eta_{\mu}$. The function $\eta_{\mu}$ satisfies
the following properties:
\begin{enumerate}
\item $\eta_{\mu}(\mathbb{H})\subset\mathbb{H},$
\item $\eta_{\mu}((-\infty,0))\subset(-\infty,0),$
\item $\lim_{x\uparrow0}\eta_{\mu}(x)=0$, and
\item $\arg(\eta_{\mu}(z))\in[\arg(z),\pi]$ for every $z\in\mathbb{H}.$
(We use the principal value of the argument.)
\end{enumerate}
Conversely, every analytic function $\eta:\mathbb{C}\backslash\mathbb{R}_{+}\to\mathbb{C}$,
that satisfies these four properties is of the form $\eta=\eta_{\mu}$
for a unique probability measure $\mu\ne\delta_{0}$. (It is seen
in \cite{BPW trans} that (4) is a consequence of the other three
properties.) The uniqueness of $\mu$ can be seen, for instance, from
the identity
\[
\psi_{\mu}(z)+1=\frac{1}{z}G_{\mu}(\frac{1}{z}),\quad z\in\mathbb{C}\backslash\mathbb{R}.
\]
 This identity implies, in particular, that $\psi_{\mu}$ continues
analytically to all points $x\in(0,+\infty)$ such that the support
of $\mu$ does not contain $1/x$. For such values of $x$, we have
\[
\psi_{\mu}^{\prime}(x)=\int_{\mathbb{R}_{+}}\frac{t\,d\mu(t)}{(1-tx)^{2}},
\]
and this is strictly positive since $\mu\ne\delta_{0}$. Thus, if
$J\subset\mathbb{R}_{+}$ is an open interval on which $\psi_{\mu}$
continues analytically to a real-valued function such that $-1\notin\psi_{\mu}(J),$
the functions $\psi_{\mu}$ and $\eta_{\mu}$ are increasing on $J$. 

Given two Borel probability measures $\mu_{1},\mu_{2}$ on $\mathbb{R}_{+}$
their free multiplicative convolution $\mu_{1}\boxtimes\mu_{2}$ is
another such measure that is defined as follows. Suppose that $(\mathcal{A},\tau)$
is a tracial $W^{*}$-probability space, and $x_{1},x_{2}\in\widetilde{A}$
are two positive elements with distributions $\mu_{1}$ and $\mu_{2}$,
respectively. Then $\mu_{1}\boxtimes\mu_{2}$ is the distribution
of the positive operator $x_{2}^{1/2}x_{1}x_{2}^{1/2}$. It is easily
seen that 
\[
\psi_{\mu_{1}\boxtimes\mu_{2}}(z)=\tau((1_{\mathcal{A}}-zx_{1}x_{2})^{-1})-1,\quad z\in\mathbb{C}\backslash\mathbb{R}_{+}.
\]
The measure $\mu_{1}\boxtimes\mu_{2}$ can be obteined, just as in
the additive case, from the existence of appropriate subordination
functions.
\begin{thm}
\label{thm:LSubordination-half-line}Let $\mu_{1}$ and $\mu_{2}$
be Borel probability measures on $\mathbb{R}$, neither of which is
a point mass, and set $\mu=\mu_{1}\boxtimes\mu_{2}$. There exist
unique continuous functions $\omega_{1},\omega_{2}:\mathbb{H}\cup\mathbb{R}\to\mathbb{H}\cup\mathbb{R}\cup\{\infty\}$
such that\emph{:}
\begin{enumerate}
\item Each of the restrictions $\omega_{j}|\mathbb{H}$ is also the restriction
of a function in ${\rm Eta}_{\mathbb{R}_{+}}$.
\item $\eta_{\mu}(z)=\eta_{\mu_{1}}(\omega_{1}(z))=\eta_{\mu_{2}}(\omega_{2}(z))=\omega_{1}(z)\omega_{2}(z)/z$
for $z\in\mathbb{H}.$
\item Set $k_{j}(z)=\eta_{\mu_{j}}(z)/z$. Then, for every $t\in\mathbb{R}$,
$\omega_{1}(t)$ is the Denjoy-Wolff point of the map $\varphi_{t}(z)=tk_{2}(tk_{1}(z)),z\in\mathbb{H},$
and $\omega_{2}(t)$ is the Denjoy-Wolff point of the map $z\mapsto tk_{1}(tk_{2}(z)),z\in\mathbb{H}$.
\item For every $t\in\mathbb{H},$ the function $\varphi_{t}$ defined above
maps the domain
\[
\Omega_{t}=\{z\in\mathbb{H}:\arg(t)<\arg(z)<\pi\}
\]
into itself, and $\omega_{1}(t)$ is the Denjoy-Wolff point of the
map $\varphi_{t}|\Omega_{t}$. An analogous statement holds for $\omega_{2}(t)$.
\end{enumerate}
\end{thm}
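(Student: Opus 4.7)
The plan is to mirror the proof strategy of Theorem \ref{thm:boxplus subordination}, replacing the Nevanlinna class of reciprocal Cauchy transforms by ${\rm Eta}_{\mathbb{R}_{+}}$ and the vertical half-plane geometry by the sectorial geometry captured by property (4) of ${\rm Eta}_{\mathbb{R}_{+}}$. Existence and uniqueness of the subordination functions on $\mathbb{H}$ are obtained by a Denjoy--Wolff fixed-point argument, and the extension to $\mathbb{R}$ is then obtained by a boundary limit.

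\textbf{Construction on $\mathbb{H}$.} For $t\in\mathbb{H}$, the key preliminary step is to verify that $\varphi_{t}(\Omega_{t})\subset\Omega_{t}$. Property (4) of ${\rm Eta}_{\mathbb{R}_{+}}$ gives $\arg k_{j}(z)=\arg\eta_{\mu_{j}}(z)-\arg z\in[0,\pi-\arg z]$ for $z\in\mathbb{H}$; a short angular computation shows that for $z\in\Omega_{t}$ one has $tk_{1}(z)\in\overline{\Omega_{t}}$ and hence $\varphi_{t}(z)\in\overline{\Omega_{t}}$, and the non-point-mass hypothesis on $\mu_{1},\mu_{2}$ makes $\varphi_{t}$ a non-constant analytic self-map of the simply connected domain $\Omega_{t}$. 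The Denjoy--Wolff theorem, applied after a conformal identification $\Omega_{t}\cong\mathbb{H}$, produces a unique attracting fixed point $\omega_{1}(t)\in\overline{\Omega_{t}}$. Setting $\omega_{2}(t)=tk_{1}(\omega_{1}(t))$, the fixed-point identity $\omega_{1}(t)=tk_{2}(\omega_{2}(t))$ translates into the subordination relations in (2), while a direct check of the four defining properties of ${\rm Eta}_{\mathbb{R}_{+}}$ for each $\omega_{j}$ establishes (1) and also yields (4). The resulting function $z\mapsto\omega_{1}(z)\omega_{2}(z)/z$ is identified with $\eta_{\mu_{1}\boxtimes\mu_{2}}$ via the known subordination theorem for free multiplicative convolution on $\mathbb{R}_{+}$ (\cite{biane}, extended in \cite{BV-unbounded}; see also \cite{bb-new-approach}). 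Uniqueness of $\omega_{1},\omega_{2}$ on $\mathbb{H}$ satisfying (1) and (2) follows because (2) forces $\omega_{1}(z)$ to be a fixed point of $\varphi_{z}$ in $\overline{\Omega_{z}}$, and only the attracting DW point yields a function of class ${\rm Eta}_{\mathbb{R}_{+}}$.

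\textbf{Extension to $\mathbb{R}$ and the Denjoy--Wolff description (3).} For $t\in\mathbb{R}$, define $\omega_{j}(t)=\lim_{\varepsilon\downarrow0}\omega_{j}(t+i\varepsilon)$, with the limit taken in $\mathbb{H}\cup\mathbb{R}\cup\{\infty\}$. Existence and continuity of this extension follow from the Julia--Carath\'eodory boundary theory applied to the ${\rm Eta}_{\mathbb{R}_{+}}$ functions $\omega_{j}$, in the same spirit as the analogous boundary extension used in Theorem \ref{thm:boxplus subordination}. For (3), one observes that $\varphi_{t+i\varepsilon}\to\varphi_{t}$ uniformly on compact subsets of $\mathbb{H}$ while $\omega_{1}(t+i\varepsilon)\to\omega_{1}(t)$; a normal-family argument applied to the iterates $\varphi_{t}^{n}$, combined with the fact that $\omega_{1}(t)$ is a fixed point of $\varphi_{t}$ by continuity, identifies $\omega_{1}(t)$ as the DW point of $\varphi_{t}$. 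The hypothesis that neither $\mu_{j}$ is a point mass ensures that $\varphi_{t}$ is never the identity map, so no exceptional case arises, in contrast with the additive theorem.

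\textbf{Main obstacle.} The technical crux is the boundary analysis: showing that the limits $\omega_{j}(t)$ exist in $\mathbb{H}\cup\mathbb{R}\cup\{\infty\}$ for every $t\in\mathbb{R}$, that the extended maps are continuous at every such $t$, and that the Denjoy--Wolff fixed-point description survives passage to the boundary. This requires careful handling of the potential singularities of $\eta_{\mu_{j}}$ on $\mathbb{R}$ (arising from atoms of $\mu_{j}$ or from vanishing of $1+\psi_{\mu_{j}}$) and of the degeneration of the sector $\Omega_{t}$ to a half-plane as $\arg t\downarrow 0$ or $\arg t\uparrow\pi$. As in the additive setting, the geometric ideas parallel the known machinery, but the angular framework of ${\rm Eta}_{\mathbb{R}_{+}}$ requires some adaptation.
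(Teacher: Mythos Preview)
The paper does not prove this theorem at all: it is stated as background material, exactly parallel to Theorem~\ref{thm:boxplus subordination}, and is implicitly covered by the same references cited there (\cite{biane,bb-new-approach,bb-semigroups,serb-boundedness,BBH}). So there is no ``paper's own proof'' to compare against; your proposal supplies more than the paper does.

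That said, your sketch follows the standard route taken in those references. The sectorial invariance $\varphi_{t}(\Omega_{t})\subset\Omega_{t}$ and the Denjoy--Wolff fixed-point construction on $\mathbb{H}$ are exactly the mechanism of \cite{bb-new-approach}; your angular computation is correct. Two points deserve sharpening. First, the boundary extension to $\mathbb{R}$ is not a consequence of generic Julia--Carath\'eodory theory for ${\rm Eta}_{\mathbb{R}_{+}}$ functions (radial limits exist only almost everywhere in general); the continuous extension to \emph{every} real point is a specific feature of subordination functions and is the main content of \cite{BBH}, where it is obtained by analyzing the convergence of Denjoy--Wolff points as the parameter $t$ approaches the boundary. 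You correctly flag this as the crux, but ``in the same spirit as'' Theorem~\ref{thm:boxplus subordination} is not an argument---that theorem is itself quoted without proof. Second, your claim that $\varphi_{t}$ is never the identity for $t\in\mathbb{R}$ is true but not immediate from the non-point-mass hypothesis alone; the paper supplies precisely this verification in Remark~\ref{rem:the map varphi NOT the identity (R)} (for $t<0$, by classifying the conformal automorphisms of $\mathbb{C}\setminus\mathbb{R}_{+}$ that could arise), so you should invoke that argument rather than assert the conclusion.
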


\begin{rem}
\label{rem:the map varphi NOT the identity (R)}In case $t<0$, the
map $\varphi_{t}$ defined above sends the entire $\mathbb{C}\backslash\mathbb{R}_{+}$
to itself. Since $\omega_{2}(t)<0$ in this case, it follows that
$\omega_{2}(t)$ is an actual fixed point of $\varphi_{t}$. We observe
that the map $\varphi_{t}$ is not the identity map. Indeed, if this
were the case, the maps $z\mapsto-k_{j}(z)$, $j=1,2$, would have
to be conformal automorphisms of $\mathbb{C}\backslash\mathbb{R}_{+}.$
Since $k_{j}(\mathbb{R}_{-})\subset\mathbb{R}_{+}$, such conformal
maps would have to be of the form $-k_{j}(z)=\alpha z$ or $-k_{j}(z)=-\alpha/z$
for some $\alpha>0$. It is easy to verify that the maps $-\alpha z$
and $\alpha/z$ do not equal $\eta_{\mu}(z)/z$ for any measure $\mu$.
Since $\mathbb{C}\backslash\mathbb{R}_{+}$ is conformally equivalent
to the unit disk $\mathbb{D}$, the following result is a consequence
of this observation and of the Schwarz lemma.
\end{rem}

\begin{cor}
\label{cor:varphi'<1 at fixed point (R)}Under the hypotheses of Theorem\emph{
\ref{thm:LSubordination-half-line}}, we have $|\varphi_{t}'(\omega_{2}(t))|<1$
for every $t<0$. Equivalently,
\[
|t^{2}k_{1}'(\omega_{1}(t))k_{2}'(\omega_{2}(t))|<1,\quad t\in(-\infty,0).
\]
\end{cor}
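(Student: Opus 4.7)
The plan is to apply the Schwarz lemma to $\varphi_{t}$ viewed as a holomorphic self-map of the simply connected domain $\mathbb{C}\setminus\mathbb{R}_{+}$, which is conformally equivalent to $\mathbb{D}$. By Remark \ref{rem:the map varphi NOT the identity (R)}, for $t<0$ the map $\varphi_{t}$ sends $\mathbb{C}\setminus\mathbb{R}_{+}$ into itself and the point $\omega_{2}(t)\in(-\infty,0)$ is an interior fixed point of $\varphi_{t}$. Fixing a conformal bijection $\Phi:\mathbb{C}\setminus\mathbb{R}_{+}\to\mathbb{D}$, the conjugate $\widetilde{\varphi}_{t}=\Phi\circ\varphi_{t}\circ\Phi^{-1}$ is a holomorphic self-map of $\mathbb{D}$ fixing $\Phi(\omega_{2}(t))\in(-1,1)$. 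Since the derivative at a fixed point is invariant under holomorphic conjugation, $\varphi_{t}'(\omega_{2}(t))=\widetilde{\varphi}_{t}'(\Phi(\omega_{2}(t)))$; the Schwarz lemma (after an additional M\"obius conjugation moving $\Phi(\omega_{2}(t))$ to $0$) then gives $|\varphi_{t}'(\omega_{2}(t))|\leq 1$, with equality precisely when $\widetilde{\varphi}_{t}$, equivalently $\varphi_{t}$, is a conformal automorphism of $\mathbb{C}\setminus\mathbb{R}_{+}$.

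To upgrade the inequality to a strict one, I would rule out the possibility that $\varphi_{t}$ is a conformal automorphism. Factor $\varphi_{t}=\alpha\circ\beta$ with $\alpha(w)=tk_{1}(w)$ and $\beta(z)=tk_{2}(z)$, both self-maps of $\mathbb{C}\setminus\mathbb{R}_{+}$ for $t<0$. The Schwarz--Pick lemma implies that every holomorphic self-map of $\mathbb{D}$ weakly contracts the hyperbolic distance and that a composition $\alpha\circ\beta$ is a hyperbolic isometry if and only if each factor is. Hence if $\varphi_{t}$ were an automorphism, both $\alpha$ and $\beta$ would be conformal automorphisms of $\mathbb{C}\setminus\mathbb{R}_{+}$ sending $\mathbb{R}_{-}$ into $\mathbb{R}_{-}$. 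The classification underlying Remark \ref{rem:the map varphi NOT the identity (R)} then forces each factor to be of the form $z\mapsto\gamma z$ or $z\mapsto\gamma/z$ with $\gamma>0$; but the first alternative forces $\eta_{\mu_{j}}(z)=(\gamma/t)z^{2}$, which is positive real at $z=i$ and hence cannot send $\mathbb{H}$ into $\mathbb{H}$, while the second alternative forces $\eta_{\mu_{j}}$ to be the nonzero constant $\gamma/t$, incompatible with the normalization $\lim_{x\uparrow 0}\eta_{\mu_{j}}(x)=0$. Thus $\varphi_{t}$ is not an automorphism and the Schwarz inequality is strict.

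The equivalent form follows from the chain rule: $\varphi_{t}'(z)=t^{2}k_{2}'(z)\,k_{1}'(tk_{2}(z))$, and at $z=\omega_{2}(t)$ the inner evaluation $tk_{2}(\omega_{2}(t))$ equals $\omega_{1}(t)$ by the subordination identity $\eta_{\mu}(t)=\omega_{1}(t)\omega_{2}(t)/t$ combined with $k_{2}(\omega_{2}(t))=\eta_{\mu_{2}}(\omega_{2}(t))/\omega_{2}(t)=\eta_{\mu}(t)/\omega_{2}(t)$. The principal obstacle is the upgrade from the weak to the strict Schwarz inequality: merely knowing that $\varphi_{t}$ is not the identity map is not enough, because a nontrivial elliptic automorphism of $\mathbb{D}$ fixing an interior point still has unimodular derivative there. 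The factorization $\varphi_{t}=\alpha\circ\beta$ together with the rigid classification of boundary-preserving automorphisms of $\mathbb{C}\setminus\mathbb{R}_{+}$ is what rules out this degenerate possibility.
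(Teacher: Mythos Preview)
Your proof is correct and follows the same route the paper sketches in Remark~\ref{rem:the map varphi NOT the identity (R)}: transport $\varphi_t$ to $\mathbb{D}$ via a conformal equivalence with $\mathbb{C}\setminus\mathbb{R}_+$ and invoke the Schwarz lemma at the interior fixed point. You are in fact more careful than the paper's phrasing, correctly noting that strict inequality requires ruling out \emph{every} conformal automorphism (not just the identity), and your Schwarz--Pick factorization together with the classification of real-axis-preserving automorphisms of $\mathbb{C}\setminus\mathbb{R}_+$ is precisely what the paper's observation ``no $-k_j$ is a conformal automorphism'' already provides.
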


We note for further use the calculation of $k_{j}'(\lambda)$. We
state it in a form in which it can also be applied to free multiplicative
convolution on the unit circle $\mathbb{T}$.
\begin{lem}
\label{lem:derivative of k}Given $x\in\widetilde{A}$, define analytic
functions
\[
\psi_{x}(\lambda)=\tau((1_{\mathcal{A}}-\lambda x)^{-1})-1,\quad\lambda\ne0,1/\lambda\in\mathbb{C}\backslash\sigma(x),
\]
\[
\eta_{x}(\lambda)=\frac{\psi_{x}(\lambda)}{1+\psi_{x}(\lambda)},\quad1/\lambda\in\mathbb{C}\backslash\sigma(x),\psi_{x}(\lambda)\ne-1,
\]
and 
\[
k_{x}(\lambda)=\frac{\eta_{x}(\lambda)}{\lambda},\quad1/\lambda\in\mathbb{C}\backslash\sigma(x),\psi_{x}(\lambda)\ne-1.
\]
Then
\[
k_{x}'(\lambda)=\frac{\tau((1_{\mathcal{A}}-\lambda x)^{-2})-\tau((1_{\mathcal{A}}-\lambda x)^{-1})^{2}}{\lambda^{2}(\psi_{x}(\lambda)+1)^{2}}.
\]
\end{lem}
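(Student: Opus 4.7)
The plan is to reduce the computation to a direct differentiation, using the abbreviation $R(\lambda) = \tau((1_{\mathcal{A}}-\lambda x)^{-1}) = 1 + \psi_{x}(\lambda)$. With this notation, we have the clean identity
\[
k_{x}(\lambda) = \frac{\eta_{x}(\lambda)}{\lambda} = \frac{1}{\lambda}\left(1 - \frac{1}{R(\lambda)}\right) = \frac{1}{\lambda} - \frac{1}{\lambda R(\lambda)},
\]
so that
\[
k_{x}'(\lambda) = -\frac{1}{\lambda^{2}} + \frac{R(\lambda) + \lambda R'(\lambda)}{\lambda^{2}R(\lambda)^{2}} = \frac{-R(\lambda)^{2} + R(\lambda) + \lambda R'(\lambda)}{\lambda^{2}R(\lambda)^{2}}.
\]

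The next step is to express $\lambda R'(\lambda)$ as a trace of the same form appearing in the statement. Differentiating under the trace,
\[
R'(\lambda) = \tau\bigl(x(1_{\mathcal{A}}-\lambda x)^{-2}\bigr),
\]
and the key algebraic identity to use is $\lambda x = 1_{\mathcal{A}} - (1_{\mathcal{A}}-\lambda x)$, which yields
\[
\lambda x(1_{\mathcal{A}}-\lambda x)^{-2} = (1_{\mathcal{A}}-\lambda x)^{-2} - (1_{\mathcal{A}}-\lambda x)^{-1}.
\]
Taking traces gives $\lambda R'(\lambda) = \tau((1_{\mathcal{A}}-\lambda x)^{-2}) - R(\lambda)$.

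Substituting this into the expression for $k_{x}'(\lambda)$, the $+R(\lambda)$ and $-R(\lambda)$ terms cancel and we obtain
\[
k_{x}'(\lambda) = \frac{\tau((1_{\mathcal{A}}-\lambda x)^{-2}) - R(\lambda)^{2}}{\lambda^{2}R(\lambda)^{2}} = \frac{\tau((1_{\mathcal{A}}-\lambda x)^{-2}) - \tau((1_{\mathcal{A}}-\lambda x)^{-1})^{2}}{\lambda^{2}(\psi_{x}(\lambda)+1)^{2}},
\]
which is the stated formula. There is no real obstacle; the only point requiring minor care is the justification of differentiating $\tau((1_{\mathcal{A}}-\lambda x)^{-1})$ termwise, which is standard for $\lambda$ in the open set where $1/\lambda \notin \sigma(x)$ because the resolvent is operator-norm analytic there.
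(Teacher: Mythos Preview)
Your proof is correct and follows essentially the same route as the paper: both compute $\lambda\psi_{x}'(\lambda)=\tau((1_{\mathcal{A}}-\lambda x)^{-2})-\tau((1_{\mathcal{A}}-\lambda x)^{-1})$ via the identity $\lambda x=1_{\mathcal{A}}-(1_{\mathcal{A}}-\lambda x)$ and then differentiate $k_{x}$. The only cosmetic difference is that you first rewrite $k_{x}(\lambda)=\lambda^{-1}-(\lambda R(\lambda))^{-1}$ before differentiating, whereas the paper applies the quotient rule directly to $\psi_{x}(\lambda)/[\lambda(\psi_{x}(\lambda)+1)]$; the resulting algebra is the same.
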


\begin{proof}
Set $m_{1}=\tau((1_{\mathcal{A}}-\lambda x)^{-1})$ and $m_{2}=\tau((1_{\mathcal{A}}-\lambda x)^{-2})$,
and note that
\begin{align*}
\lambda\psi_{x}'(\lambda) & =\tau(\lambda x(1_{\mathcal{A}}-\lambda x)^{-2})\\
 & =\tau((1_{\mathcal{A}}-\lambda x)^{-2})-\tau((1_{\mathcal{A}}-\lambda x)^{-1})=m_{2}-m_{1},
\end{align*}
so
\begin{align*}
k_{x}'(\lambda) & =\frac{\psi_{x}'(\lambda)\lambda(\psi_{\mu}(\lambda)+1)-\psi_{x}(\lambda)[(\psi_{x}(\lambda)+1)+\lambda\psi'_{x}(\lambda)]}{\lambda^{2}(\psi_{x}(\lambda)+1)^{2}}\\
 & =\frac{\lambda\psi_{x}'(\lambda)-\psi_{x}(\lambda)(\psi_{x}(\lambda)+1)}{\lambda^{2}(\psi_{x}(\lambda)+1)^{2}}\\
 & =\frac{m_{2}-m_{1}-(m_{1}-1)m_{1}}{\lambda^{2}(\psi_{x}(\lambda)+1)^{2}}.
\end{align*}
The stated formula follows immediately.
\end{proof}
The following result is relevant in the calculation of $\varphi_{t}'$.
\begin{lem}
\label{lem:product of derivatives alpha eta etc}Suppose that $x_{1},x_{2}\in\widetilde{\mathcal{A}},$
$x=x_{1}x_{2},$and $\lambda_{1},\lambda_{2},\lambda,\,\alpha\in\mathbb{C}\backslash\{0\}$
satisfy the following conditions\emph{:}
\begin{enumerate}
\item $1/\lambda_{1}\notin\sigma(x_{1}),1/\lambda_{2}\notin\sigma(x_{2})$,
$1/\lambda\notin\sigma(x)$,
\item $\psi_{x_{1}}(\lambda_{1})=\psi_{x_{2}}(\lambda_{2})=\psi_{x}(\lambda)=\alpha\ne-1,$
and
\item $\lambda_{1}\lambda_{2}=\lambda\eta_{x}(\lambda)=\lambda\alpha/(1+\alpha)$.
\end{enumerate}
Then we have
\begin{align*}
k_{x_{1}}'(\lambda_{1})k_{x_{2}}'(\lambda_{2}) & =\frac{\tau((1_{\mathcal{A}}-\lambda_{1}x_{1})^{-2})-\tau((1_{\mathcal{A}}-\lambda_{1}x_{1})^{-1})^{2}}{\lambda^{2}\alpha^{2}(\alpha+1)^{2}}\\
 & \times(\tau((1_{\mathcal{A}}-\lambda_{2}x_{2})^{-2})-\tau((1_{\mathcal{A}}-\lambda_{2}x_{2})^{-1})^{2})
\end{align*}
\end{lem}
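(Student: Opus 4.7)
The proof is essentially a direct computation combining Lemma \ref{lem:derivative of k} with the algebraic constraints (2) and (3). The plan is as follows.

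First I would apply Lemma \ref{lem:derivative of k} separately to each of the factors $k_{x_1}'(\lambda_1)$ and $k_{x_2}'(\lambda_2)$, obtaining
\[
k_{x_j}'(\lambda_j)=\frac{\tau((1_{\mathcal{A}}-\lambda_j x_j)^{-2})-\tau((1_{\mathcal{A}}-\lambda_j x_j)^{-1})^{2}}{\lambda_j^{2}(\psi_{x_j}(\lambda_j)+1)^{2}},\qquad j=1,2.
\]
The hypothesis (2) immediately gives $\psi_{x_j}(\lambda_j)+1=\alpha+1$ for both $j=1,2$. Multiplying the two expressions therefore yields a denominator equal to $\lambda_1^{2}\lambda_2^{2}(\alpha+1)^{4}$.

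Next I would invoke hypothesis (3), which states $\lambda_{1}\lambda_{2}=\lambda\alpha/(1+\alpha)$, and therefore $\lambda_{1}^{2}\lambda_{2}^{2}=\lambda^{2}\alpha^{2}/(1+\alpha)^{2}$. Substituting this into the denominator collapses $\lambda_1^{2}\lambda_2^{2}(\alpha+1)^{4}$ to $\lambda^{2}\alpha^{2}(\alpha+1)^{2}$, matching the formula in the statement. The numerator is simply the product of the two variance-type quantities, in the form displayed.

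There is essentially no obstacle here; the only thing to check carefully is that the hypotheses in (1) and (2) ensure that both instances of Lemma \ref{lem:derivative of k} are genuinely applicable at the points $\lambda_1,\lambda_2$ (namely, that $1/\lambda_j\notin\sigma(x_j)$ and $\psi_{x_j}(\lambda_j)=\alpha\ne-1$), which is given. The role of the condition $\psi_x(\lambda)=\alpha$ and $\lambda_1\lambda_2=\lambda\alpha/(1+\alpha)$ together is to make the denominators $\lambda_j^{2}(\alpha+1)^{2}$ combine into the single factor $\lambda^{2}\alpha^{2}(\alpha+1)^{2}$, so that the expression can later be recognized as $\varphi_t'$ at the appropriate point in the analysis of free multiplicative convolution on $\mathbb{R}_+$ and on $\mathbb{T}$.
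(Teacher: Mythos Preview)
Your proposal is correct and follows exactly the same approach as the paper: two applications of Lemma~\ref{lem:derivative of k} together with hypothesis~(2) give the product with denominator $\lambda_1^{2}\lambda_2^{2}(\alpha+1)^{4}$, and hypothesis~(3) is then used to rewrite $\lambda_1^{2}\lambda_2^{2}(\alpha+1)^{4}=\lambda^{2}\alpha^{2}(\alpha+1)^{2}$.
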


\begin{proof}
Two applications of Lemma \ref{lem:derivative of k} yield
\begin{align*}
k_{x_{1}}'(\lambda_{1})k_{x_{2}}'(\lambda_{2}) & =\frac{\tau((1_{\mathcal{A}}-\lambda_{1}x_{1})^{-2})-\tau((1_{\mathcal{A}}-\lambda_{1}x_{1})^{-1})^{2}}{\lambda_{1}^{2}(\alpha+1)^{2}}\\
 & \times\frac{\tau((1_{\mathcal{A}}-\lambda_{2}x_{2})^{-2})-\tau((1_{\mathcal{A}}-\lambda_{2}x_{2})^{-1})^{2}}{\lambda_{2}^{2}(\alpha+1)^{2}},
\end{align*}
and part (3) of the hypothesis allows us to replace the denominator
by
\[
\lambda_{1}^{2}(\alpha+1)^{2}\lambda_{2}^{2}(\alpha+1)^{2}=\lambda^{2}\alpha^{2}(\alpha+1)^{2},
\]
and thus obtain the formula in the statement.
\end{proof}
The following result first appeared in \cite[Section 3]{haagerup-fields}.
\begin{lem}
\label{lem:multiplicative Haagerup}Suppose that $y_{1},y_{2}\in\widetilde{A}$
are such that $1_{\mathcal{A}}-y_{1}$ and $1_{\mathcal{A}}-y_{2}$
are boundedly invertible, and let $\beta\in\mathbb{C}\backslash\{0,1\}$.
Then $1_{\mathcal{A}}-(\beta/(\beta-1))y_{1}y_{2}$ is boundedly invertible
if and only if
\[
1_{\mathcal{A}}-\frac{1}{\beta(\beta-1)}((1_{\mathcal{A}}-y_{1})^{-1}-\beta)((1_{\mathcal{A}}-y_{2})^{-1}-\beta)
\]
is invertible.
\end{lem}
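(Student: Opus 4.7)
The plan is to find an algebraic identity, directly analogous to the one used in Remark \ref{rem:a bit of algebra}, that expresses the second operator in the statement as a product of the first with boundedly invertible factors. Since multiplication by boundedly invertible elements of $\widetilde{\mathcal{A}}$ preserves bounded invertibility, and since $\beta\ne0$ is a nonzero scalar, such an identity yields the equivalence of the two invertibility assertions at once.

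Concretely, I would set $b_j=(1_{\mathcal{A}}-y_j)^{-1}$ for $j=1,2$, so that the hypothesis gives $b_j,b_j^{-1}\in\mathcal{A}$, and $y_j=1_{\mathcal{A}}-b_j^{-1}$. The identity I propose to verify is
\[
b_1\left(1_{\mathcal{A}}-\frac{\beta}{\beta-1}\,y_1y_2\right)b_2=\beta\left(1_{\mathcal{A}}-\frac{1}{\beta(\beta-1)}(b_1-\beta)(b_2-\beta)\right).
\]
To check it, one uses the two cancellations $b_1y_1=b_1-1_{\mathcal{A}}$ and $y_2b_2=b_2-1_{\mathcal{A}}$ to obtain
\[
b_1y_1y_2b_2=(b_1-1_{\mathcal{A}})(b_2-1_{\mathcal{A}})=b_1b_2-b_1-b_2+1_{\mathcal{A}},
\]
so the left-hand side becomes $b_1b_2-\frac{\beta}{\beta-1}(b_1b_2-b_1-b_2+1_{\mathcal{A}})$. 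A direct expansion of the right-hand side gives $\beta-\frac{1}{\beta-1}(b_1b_2-\beta b_1-\beta b_2+\beta^2)$. Collecting the coefficients of $b_1b_2$, $b_1$, $b_2$, and $1_{\mathcal{A}}$ on each side yields, respectively, $-1/(\beta-1)$, $\beta/(\beta-1)$, $\beta/(\beta-1)$, and $-\beta/(\beta-1)$, so the two expressions agree.

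Once this identity is established, the conclusion is immediate: the boundedly invertible factors $b_1$ and $b_2$ can be stripped from the left side, and the nonzero scalar $\beta$ from the right side, without affecting bounded invertibility, which gives the stated equivalence. There is no real obstacle; the only substantive step is guessing the identity, and it is essentially forced by noting that $(b_1-\beta)(b_2-\beta)=b_1b_2-\beta(b_1+b_2)+\beta^2$ differs only by a scalar from the natural expansion of $-b_1\bigl((\beta-1)-\beta y_1y_2\bigr)b_2$ produced by the cancellations above.
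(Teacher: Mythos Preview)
Your proof is correct and is essentially the same as the paper's: the paper states the identity in the equivalent form
\[
(1_{\mathcal{A}}-y_{1})\left[1_{\mathcal{A}}-\frac{1}{\beta(\beta-1)}\bigl((1_{\mathcal{A}}-y_{1})^{-1}-\beta\bigr)\bigl((1_{\mathcal{A}}-y_{2})^{-1}-\beta\bigr)\right](1_{\mathcal{A}}-y_{2})=\frac{1}{\beta}\left(1_{\mathcal{A}}-\frac{\beta}{\beta-1}y_{1}y_{2}\right),
\]
which is your identity after multiplying on the left by $b_{1}^{-1}=1_{\mathcal{A}}-y_{1}$ and on the right by $b_{2}^{-1}=1_{\mathcal{A}}-y_{2}$. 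The verification you give and the conclusion drawn from it match the paper's argument.
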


\begin{proof}
The easily verified identity
\begin{align*}
(1_{\mathcal{A}}-y_{1}) & \left[1_{\mathcal{A}}-\frac{1}{\beta(\beta-1)}((1_{\mathcal{A}}-y_{1})^{-1}-\beta1_{\mathcal{A}})((1_{\mathcal{A}}-y_{2})^{-1}-\beta1_{\mathcal{A}})\right](1_{\mathcal{A}}-y_{2})\\
 & =\frac{1}{\beta}\left(1_{\mathcal{A}}-\frac{\beta}{\beta-1}y_{1}y_{2}\right)
\end{align*}
implies the stated result.
\end{proof}
We can now formulate a multiplicative analog of Theorem \ref{thm:resolvent of sum in terms of pairs}.
The functions $\omega_{j}$ in the statement are those provided by
Theorem \ref{thm:LSubordination-half-line}.
\begin{thm}
\label{thm:complement of supp multiplicative R}Let $\mu_{1}$ and
$\mu_{2}$ be nondegenerate Borel probability measures on $\mathbb{R}_{+}$,
and set $\mu=\mu_{1}\boxtimes\mu_{2}.$ If $t\in\mathbb{R}_{+}$ and
$1/t\notin{\rm supp}(\mu)$, denote by
\[
V_{\mu}(t)=\int_{\mathbb{R}_{+}}\frac{d\mu(s)}{(1-ts)^{2}}-\left[\int_{\mathbb{R}_{+}}\frac{d\mu(s)}{1-ts}\right]^{2}
\]
the variance of the function $s\mapsto1/(1-ts)$ in $L^{2}(\mu)$.
Define the following sets\emph{:
\begin{align*}
C_{\mu} & =\{t\in\mathbb{R}_{+}:1/t\notin{\rm supp}(\mu),\psi_{\mu}(t)\notin\{0,-1\}\},\\
D_{\mu_{1,}\mu_{2}} & =\{(t_{1},t_{2})\in C_{\mu_{1}}\times C_{\mu_{2}}:\psi_{\mu_{1}}(t_{1})=\psi_{\mu_{2}}(t_{2})\notin\{0,-1\},\text{\emph{and}}\\
 & V_{\mu_{1}}(t_{1})V_{\mu_{2}}(t_{2})/[\psi_{\mu_{1}}(t_{1})(\psi_{\mu_{1}}(t_{1})+1)]^{2}<1\}.
\end{align*}
}Then the function $t\mapsto(\omega_{1}(t),\omega_{2}(t))$ maps $C_{\mu}$
homeomorphically onto $D_{\mu_{1},\mu_{2}}$, and the inverse function
is given by
\[
(t_{1},t_{2})\mapsto\frac{t_{1}t_{2}}{\eta_{\mu_{1}}(t_{1})}=\frac{t_{1}t_{2}}{\eta_{\mu_{2}}(t_{2})}.
\]
\end{thm}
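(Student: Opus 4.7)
The approach parallels the combined proofs of Propositions \ref{pro:omega maps resolvent to res} and \ref{prop:if(F'-1)(F'-1)<1}, adapted to the multiplicative setting with the tools developed earlier in Section \ref{sec:mult-on-R}. I fix $*$-free positive selfadjoint operators $x_{1},x_{2}\in\widetilde{\mathcal{A}}$ with distributions $\mu_{1}$ and $\mu_{2}$, and work with $x_{1}x_{2}$, whose $\psi$-transform agrees with $\psi_{\mu}$ by cyclicity of $\tau$. Set $U=\mathbb{H}\cup(-\infty,0)\cup C_{\mu}$, a connected set on which $1_{\mathcal{A}}-\lambda x_{1}x_{2}$ is boundedly invertible. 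On $U$ define $\alpha(\lambda)=\psi_{\mu}(\lambda)=\psi_{\mu_{j}}(\omega_{j}(\lambda))$, $\beta(\lambda)=1+\alpha(\lambda)$, and
\[
c_{j}(\lambda)=(1_{\mathcal{A}}-\omega_{j}(\lambda)x_{j})^{-1}-\beta(\lambda)1_{\mathcal{A}},\qquad j=1,2,
\]
so that $\tau(c_{j}(\lambda))=0$ and $c_{1},c_{2}$ are $*$-free as functions of the $*$-free pair $x_{1},x_{2}$. Fix a continuous branch $\kappa(\lambda)$ of $\sqrt{\alpha(\lambda)\beta(\lambda)}$ on $U$ (possible since $\alpha\beta$ has no zeros there) and set $a_{j}(\lambda)=c_{j}(\lambda)/\kappa(\lambda)$. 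Lemma \ref{lem:multiplicative Haagerup} applied with $y_{j}=\omega_{j}(\lambda)x_{j}$ and the identity $\omega_{1}(\lambda)\omega_{2}(\lambda)=\lambda\eta_{\mu}(\lambda)=\lambda\alpha/\beta$ (Theorem \ref{thm:LSubordination-half-line}(2)) identify the invertibility of $1_{\mathcal{A}}-a_{1}(\lambda)a_{2}(\lambda)$ with that of $1_{\mathcal{A}}-\lambda x_{1}x_{2}$; the latter holds throughout $U$.

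\emph{Forward direction.} A multiplicative analog of Lemma \ref{lem:2-norm of centered resolvents}, based on $\omega_{j}(\lambda)=O(\lambda)$ near the origin together with a direct variance estimate, gives $\|a_{j}(\lambda)\|_{2}\to0$ as $\lambda\uparrow0$ along $(-\infty,0)$. Hence $\|a_{1}\|_{2}\|a_{2}\|_{2}<1$ at such base points, and Corollary \ref{cor: clopen} propagates the strict inequality to all of $U$. For $t\in C_{\mu}$, an $L^{2}(\mu_{j})$ computation gives $\|c_{j}(t)\|_{2}^{2}=V_{\mu_{j}}(\omega_{j}(t))$ and $\kappa(t)^{2}=\psi_{\mu}(t)(\psi_{\mu}(t)+1)$, so the inequality $\|a_{1}(t)\|_{2}\|a_{2}(t)\|_{2}<1$ is precisely the variance condition defining $D_{\mu_{1},\mu_{2}}$. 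The remaining requirements ($\omega_{j}(t)\in C_{\mu_{j}}$ and the matching of the $\psi$-values) follow from the subordination equations combined with the Nevanlinna-type integral representations of functions in ${\rm Eta}_{\mathbb{R}_{+}}$, which, as in the proof of Proposition \ref{pro:omega maps resolvent to res}(1), force the continuous boundary extensions of $\omega_{j}$ to map $C_{\mu}$ into the complement of ${\rm supp}(\mu_{j})$.

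\emph{Converse direction.} Given $(t_{1},t_{2})\in D_{\mu_{1},\mu_{2}}$, set $t=t_{1}t_{2}/\eta_{\mu_{1}}(t_{1})=t_{1}t_{2}/\eta_{\mu_{2}}(t_{2})$. The defining inequality rephrases as $\|a_{1}(t)\|_{2}\|a_{2}(t)\|_{2}<1$, so Lemma \ref{lem:Lehner} yields invertibility of $1_{\mathcal{A}}-a_{1}(t)a_{2}(t)$, whence Lemma \ref{lem:multiplicative Haagerup} gives invertibility of $1_{\mathcal{A}}-tx_{1}x_{2}$ and therefore $1/t\notin{\rm supp}(\mu)$; since $\psi_{\mu}(t)=\alpha\notin\{0,-1\}$, we have $t\in C_{\mu}$. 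To identify $\omega_{1}(t)=t_{1}$, verify by direct substitution that $t_{1}$ is a fixed point of $\varphi_{t}(z)=tk_{2}(tk_{1}(z))$, and then use Lemma \ref{lem:product of derivatives alpha eta etc} to compute
\[
\varphi_{t}'(t_{1})=t^{2}k_{1}'(t_{1})k_{2}'(t_{2})=\frac{V_{\mu_{1}}(t_{1})V_{\mu_{2}}(t_{2})}{[\psi_{\mu}(t)(\psi_{\mu}(t)+1)]^{2}}<1.
\]
Thus $t_{1}$ is the attracting fixed point, hence the Denjoy-Wolff point of $\varphi_{t}$, so $\omega_{1}(t)=t_{1}$ by Theorem \ref{thm:LSubordination-half-line}(3); symmetrically $\omega_{2}(t)=t_{2}$. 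The formula for the inverse map is now immediate from Theorem \ref{thm:LSubordination-half-line}(2).

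\emph{Main obstacle.} The principal technical step is the multiplicative analog of Lemma \ref{lem:2-norm of centered resolvents}, namely the boundary behavior $\|a_{j}(\lambda)\|_{2}\to0$ as $\lambda\uparrow0$, which anchors the clopen propagation argument. This combines the asymptotics $\omega_{j}(\lambda)\sim\lambda$ near $0$ (coming from the normalization of ${\rm Eta}_{\mathbb{R}_{+}}$ functions at the origin) with an $L^{2}$-control on $V_{\mu_{j}}(\omega_{j}(\lambda))$, chosen so that the branch $\kappa$ vanishes at a matching rate. A secondary care concerns unbounded $x_{j}$ and the verification, via the ${\rm Eta}_{\mathbb{R}_{+}}$ representation, that the continuous extensions of $\omega_{j}$ on $C_{\mu}$ truly avoid ${\rm supp}(\mu_{j})$.
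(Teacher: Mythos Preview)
Your architecture is the same as the paper's: realize $\mu_{1},\mu_{2}$ by free positive operators, set up the centered resolvents $c_{j}(\lambda)=(1_{\mathcal A}-\omega_{j}(\lambda)x_{j})^{-1}-\beta(\lambda)1_{\mathcal A}$ and the normalized $a_{j}=c_{j}/\kappa$, use Lemma~\ref{lem:multiplicative Haagerup} to identify invertibility of $1_{\mathcal A}-a_{1}a_{2}$ with that of $1_{\mathcal A}-\lambda x_{1}x_{2}$, and run the clopen argument of Corollary~\ref{cor: clopen}. The converse direction via the Denjoy--Wolff fixed point and Lemma~\ref{lem:product of derivatives alpha eta etc} is also exactly what the paper does.

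The gap is precisely the step you flag as the ``main obstacle'': the asserted multiplicative analog of Lemma~\ref{lem:2-norm of centered resolvents}, namely $\|a_{j}(\lambda)\|_{2}\to0$ as $\lambda\uparrow0$, is \emph{false} in general. Writing $s=\omega_{j}(\lambda)\to0^{-}$, one has
\[
\|a_{j}(\lambda)\|_{2}^{2}=\frac{V_{\mu_{j}}(s)}{|\psi_{\mu_{j}}(s)(1+\psi_{\mu_{j}}(s))|}
=\frac{\operatorname{Var}_{\mu_{j}}(Z)}{E[Z](1-E[Z])},\qquad Z=\frac{-sr}{1-sr}\in[0,1).
\]
This ratio is always at most $1$, but for measures with sufficiently heavy tails it stays bounded away from $0$. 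For instance, if $\mu_{j}=\sum_{n\ge1}2^{-n}\delta_{4^{n}}$ and $s=-4^{-N}$, the law of $Z$ is essentially a mixture of a mass near $0$, a mass near $1$ of size $\sim 2^{-N}$, and one intermediate atom; a direct computation gives a ratio bounded away from $0$ (roughly $1/2$) uniformly in $N$. So no ``vanishing at the origin'' anchor is available, and the clopen propagation has nothing to start from.

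The paper avoids this entirely: the anchor on $(-\infty,0)$ comes from Corollary~\ref{cor:varphi'<1 at fixed point (R)}, a Schwarz--lemma consequence of the fact that for $t<0$ the iteration map $\varphi_{t}$ is a nonidentity self-map of the simply connected domain $\mathbb{C}\setminus\mathbb{R}_{+}$ with an interior fixed point $\omega_{1}(t)$. Combined with Lemma~\ref{lem:product of derivatives alpha eta etc}, this gives $\|a_{1}(\lambda)\|_{2}\|a_{2}(\lambda)\|_{2}<1$ for every $\lambda\in(-\infty,0)$ with no moment hypotheses and no asymptotic analysis. Replace your proposed limit argument by an appeal to Corollary~\ref{cor:varphi'<1 at fixed point (R)} and the rest of your proof goes through as written.
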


\begin{proof}
As in the case of additive free convolution, it is easy to verify
that $\omega_{j}$ is real-valued on the set $C_{\mu}$, and that
$1/\omega_{j}(t)\notin{\rm supp}(\mu_{j})$ for $t\in C_{\mu}$. Indeed,
the fact that $\arg\eta_{\mu}(z)=\arg\eta_{\mu_{j}}(\omega_{j}(z))\ge\arg\omega_{j}(z)$
implies that $\omega_{j}(t)$ is real for $t\in C_{\mu}$, and then
$\eta_{\mu}(t)=\eta_{\mu_{j}}(\omega_{j}(t))$ must be real as well
for such $t$. 

We consider freely independent nonnegative operators $x_{1},x_{2}\in\widetilde{\mathcal{A}}$
whose distributions are $\mu_{1}$ and $\mu_{2}$ respectively. Suppose
first that $(t_{1},t_{2})\in D_{\mu_{1},\mu_{2}}$ and define $t=t_{1}t_{2}/\eta_{\mu_{j}}(t_{j})$.
Observe that the functions $k_{j}$ of Theorem \ref{thm:LSubordination-half-line}(3)
can be identified as $k_{j}=k_{x_{j}}$ for $j=1,2$, and that $t_{1}$
is a fixed point of the map $\varphi_{t}$ defined there. Indeed,
\begin{align*}
\varphi_{t}(t_{1}) & =tk_{2}(tk_{1}(t_{1}))=tk_{2}\left(\frac{t\eta_{\mu_{1}}(t_{1})}{t_{1}}\right)\\
 & =tk_{2}(t_{2})=\frac{t\eta_{\mu_{2}}(t_{2})}{t_{2}}=t_{1}.
\end{align*}
Next, observe that
\[
\tau((1_{\mathcal{A}}-t_{j}x_{j})^{-2})-\tau((1_{\mathcal{A}}-t_{j}x_{j})^{-1})^{2}=V_{\mu_{j}}(t_{j}),\quad j=1,2,
\]
or, alternately,
\[
\|(1_{\mathcal{A}}-t_{j}x_{j})^{-1}-\tau((1_{\mathcal{A}}-t_{j}x_{j})^{-1})\|_{2}^{2}=V_{\mu_{j}}(t_{j}),\quad j=1,2.
\]
Setting $\alpha=\psi_{\mu_{1}}(t_{1})$, Lemmas \ref{lem:derivative of k}
and \ref{lem:product of derivatives alpha eta etc} (with $t,t_{1},t_{2}$
in place of $\lambda,\lambda_{1},\lambda_{2}$), and the definition
of $D_{\mu_{1},\mu_{2}},$ yield
\begin{align*}
\varphi_{t}'(t_{1}) & =t^{2}k_{x_{1}}'(t_{1})k_{x_{2}}'(t_{2})\\
 & =\frac{V_{\mu_{1}}(t_{1})V_{\mu_{2}}(t_{2})}{\alpha^{2}(1+\alpha)^{2}}<1,
\end{align*}
showing thereby that $t_{1}$ is the Denjoy-Wolff point of $\varphi_{t}$,
that is, $t_{1}=\omega_{1}(t)$. Similarly, $t_{2}=\omega_{2}(t)$.

Suppose now that $t\in C_{\mu}$, and consider the connected set $S=\mathbb{H}\cup(-\infty,0)\cup\{t\}$.
For every $\lambda\in S$, the operators $1_{\mathcal{A}}-\lambda x_{1}x_{2},$
$1_{\mathcal{A}}-\omega_{1}(\lambda)x_{1}$, and $1_{\mathcal{A}}-\omega_{2}(\lambda)x_{2}$
are boundedly invertible, and 
\begin{align*}
\tau((1_{\mathcal{A}}-\lambda x_{1}x_{2})^{-1}) & =\tau((1_{\mathcal{A}}-\omega_{1}(\lambda)x_{1})^{-1}\\
 & =\tau((1_{\mathcal{A}}-\omega_{2}(\lambda)x_{2})^{-1}=\psi_{\mu}(\lambda)+1.
\end{align*}
Setting $y_{1}=\omega_{1}(\lambda)x_{1},$ $y_{2}=\omega_{2}(\lambda)x_{2}$,
and $\beta=\psi_{\mu}(\lambda)+1$ in Lemma \ref{lem:multiplicative Haagerup},
we conclude that
\[
1_{\mathcal{A}}-\frac{((1_{\mathcal{A}}-\omega_{1}(\lambda)x_{1})^{-1}-(\psi_{\mu}(\lambda)+1)1_{\mathcal{A}})((1_{\mathcal{A}}-\omega_{2}(\lambda)x_{2})^{-1}-(\psi_{\mu}(\lambda)+1)1_{\mathcal{A}})}{(\psi_{\mu}(\lambda)+1)\psi_{\mu}(\lambda)}
\]
 is invertible for every $\lambda\in S$. Since $\psi_{\mu}(\lambda)+1=\tau((1_{\mathcal{A}}-\omega_{1}(\lambda)x_{1})^{-1})$,
Corollary \ref{cor:varphi'<1 at fixed point (R)} and Lemma \ref{lem:product of derivatives alpha eta etc}
imply that
\begin{align*}
 & \left|\frac{1}{(\psi_{\mu}(\lambda)+1)\psi_{\mu}(\lambda)}\right|\|(1_{\mathcal{A}}-\omega_{1}(\lambda)x_{1})^{-1}-(\psi_{\mu}(\lambda)+1)1_{\mathcal{A}}\|_{2}\\
 & \times\|(1_{\mathcal{A}}-\omega_{2}(\lambda)x_{2})^{-1}-(\psi_{\mu}(\lambda)+1)1_{\mathcal{A}}\|_{2}<1
\end{align*}
for $\lambda\in(-\infty,0)$. By Lemma \ref{lem:Lehner}, this inequality
is also true for every $\lambda\in S$, in particular for $\lambda=t$.
We conclude that the pair $(\omega_{1}(t),\omega_{2}(t))$ belongs
to $D_{\mu_{1},\mu_{2}}$.
\end{proof}
Before discussing the connectivity of the support of a free multiplicative
convolution, we require a few observations about the functions $\psi_{\mu}$
and $\eta_{\mu}$ associated with a nondegenerate probability measure
on $\mathbb{R}_{+}$. We recall the identity
\[
\psi_{\mu}(z)+1=\frac{1}{z}G_{\mu}\left(\frac{1}{z}\right),\quad z\in\mathbb{C}\backslash{\rm supp}(\mu),
\]
 that implies, in particular, that $\psi_{\mu}$ and $\eta_{\mu}$
continue analytically (meromorphically when $\psi_{\mu}(x)=-1$) to
all points $x\in(0,+\infty)$ such that $1/x\notin{\rm supp}(\mu)$.
For such values of $x$, we have
\[
\psi_{\mu}^{\prime}(x)=\int_{\mathbb{R}_{+}}\frac{t\,d\mu(t)}{(1-tx)^{2}},
\]
and this is strictly positive since $\mu\ne\delta_{0}$. Thus, if
$J\subset\mathbb{R}_{+}$ is an open interval on which $\psi_{\mu}$
continues analytically to a real-valued function such that $-1\notin\psi_{\mu}(J),$
the functions $\psi_{\mu}$ and $\eta_{\mu}$ are increasing on $J$.
In particular, $\psi_{\mu}$ and $\eta_{\mu}$ can only change sign
once in such an interval. The inequalities $\arg(z)\le\arg(\eta_{\mu}(z))\le\pi$
impliy that one can write
\[
\eta_{\mu}(z)=z\exp(u_{\mu}(z)),\quad z\in\mathbb{H}\cup(\mathbb{R}\backslash{\rm supp}(\mu)),
\]
 where $u$ is continuous, analytic on $\mathbb{H},$ $u>0$ on $(-\infty,0)$,
and $0\le\Im u(z)\le\pi$. In particular, $u_{\mu}$ has a Nevanlinna
representation of the form
\[
u_{\mu}(z)=a_{\mu}+\int_{\mathbb{R}_{+}}\frac{1+zs}{s-z}\,d\rho_{\mu}(s),\quad z\in\mathbb{H}\cup(-\infty,0),
\]
for some $a_{\mu}\in\mathbb{R}$ and some finite Borel measure $\rho_{\mu}$
on $\mathbb{R}_{+}$. (There is no term of the form $bz,b>0,$ in
this representation because the imaginary part of $u_{\mu}$ is bounded.)

If $J$ is an open interval where $\psi_{\mu}$ has a real-valued
continuation such that $0,-1\notin\psi_{\mu}(J)$, the function $u_{\mu}$
continues analytically to $J$, it takes real values if $\psi_{\mu}(J)\subset(0,+\infty)$,
and it has imaginary part $\pi$ if $\psi_{\mu}(J)\subset(-\infty,0)$.
In particular, if $\psi_{\mu}(J)\subset(0,+\infty)$, then $J$ is
disjoint from the support of $\rho_{\mu}$.
\begin{lem}
\label{lem:u_mu when supp connected}With the notation above, suppose
that ${\rm supp(\mu)\subset[\alpha,\beta]},$where $0<\alpha<\beta<+\infty$.
Then\emph{:}
\begin{enumerate}
\item We have $\psi_{\mu}(t)<-1$ for $t\in(1/\alpha,+\infty)$. 
\item We have $\psi_{\mu}(t)>0$ and $u_{\mu}(t)<0$ for $t\in(0,1/\beta)$.
\item ${\rm supp}(\rho_{\mu})\subset[1/\beta,1/\alpha]$.
\item The function 
\[
v_{\mu}(z)=u_{\mu}(e^{z})
\]
 is convex on $(-\log(\alpha),+\infty)$ and concave on $(-\infty,-\log(\beta)).$
\end{enumerate}
Properly modified statements are true when ${\rm supp}(\mu)$ is of
the form $[\alpha,+\infty)$ or $[0,\beta]$ for some $\alpha\ge0,\beta>0$.
In these cases, one or both of the intervals $(0,1/\beta)$ and $(1/\alpha,+\infty)$
must be replaced by $\varnothing$.

\end{lem}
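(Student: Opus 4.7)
My plan is to handle the four assertions in order, with (1) and (2) via direct sign analyses of the identity
\[
\psi_{\mu}(t)+1=\int_{[\alpha,\beta]}\frac{d\mu(s)}{1-ts},
\]
and (3) and (4) via the Nevanlinna representation of $u_{\mu}$. For (1), if $t>1/\alpha$ then every factor $1-ts$ with $s\in[\alpha,\beta]$ is strictly negative, so the integral above is negative and $\psi_{\mu}(t)<-1$. For (2), if $t\in(0,1/\beta)$ then each $ts$ lies in $(0,1)$, whence $ts/(1-ts)>0$ and $\psi_{\mu}(t)>0$; moreover, pointwise monotonicity of $u\mapsto u/(1-u)$ on $[0,1)$ together with monotonicity of $u\mapsto u/(1+u)$ yields the two-sided bound $t\alpha\le\eta_{\mu}(t)\le t\beta$, which pins down the sign of $u_{\mu}(t)=\log(\eta_{\mu}(t)/t)$ on $(0,1/\beta)$ as claimed.

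For (3), the combination of (1) and (2) shows that $\psi_{\mu}$ extends real-analytically to $(0,1/\beta)\cup(1/\alpha,+\infty)$ and takes values away from $\{0,-1\}$ there; consequently $\eta_{\mu}$ is real-analytic with $\eta_{\mu}(t)/t$ positive on both intervals (on $(1/\alpha,+\infty)$ this uses that numerator and denominator of $\psi_{\mu}/(1+\psi_{\mu})$ are both negative), and therefore $u_{\mu}(z)=\log(\eta_{\mu}(z)/z)$ has a real-valued analytic continuation to $\mathbb{R}\setminus[1/\beta,1/\alpha]$. The representing measure of a Nevanlinna function vanishes on every open subset of $\mathbb{R}$ through which the function extends real-analytically, so ${\rm supp}(\rho_{\mu})\subset[1/\beta,1/\alpha]$.

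For (4), I would differentiate the Nevanlinna representation twice after rewriting $(1+zs)/(s-z)=-s+(1+s^{2})/(s-z)$, obtaining
\[
u_{\mu}'(w)=\int\frac{(1+s^{2})\,d\rho_{\mu}(s)}{(s-w)^{2}},\qquad u_{\mu}''(w)=2\int\frac{(1+s^{2})\,d\rho_{\mu}(s)}{(s-w)^{3}}.
\]
The chain rule gives $v_{\mu}''(z)=e^{z}u_{\mu}'(e^{z})+e^{2z}u_{\mu}''(e^{z})$, and the algebraic identity $(s-w)^{-2}+2w(s-w)^{-3}=(s+w)(s-w)^{-3}$ collapses this to
\[
v_{\mu}''(z)=e^{z}\int_{[1/\beta,\,1/\alpha]}\frac{(1+s^{2})(s+e^{z})}{(s-e^{z})^{3}}\,d\rho_{\mu}(s),
\]
where the range of integration is forced by (3). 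On each of the two half-lines the sign of $(s-e^{z})^{3}$ is constant on ${\rm supp}(\rho_{\mu})$: for $z>-\log\alpha$ one has $e^{z}>1/\alpha\ge s$ so $(s-e^{z})^{3}<0$ throughout, while for $z<-\log\beta$ one has $e^{z}<1/\beta\le s$ so $(s-e^{z})^{3}>0$. This yields the asserted sign of $v_{\mu}''$ on each half-line.

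The step I expect to require the most care is (3): the definition of $u_{\mu}$ involves a branch choice of the logarithm, and one has to check that its real-analytic continuation to the two exterior intervals is consistent with (in particular, differs by no extraneous multiple of $2\pi i$ from) the Cauchy-integral form of the Nevanlinna representation, so that the differentiations used in (4) are legitimate. Once this is settled, (4) is essentially a bookkeeping computation. The modifications for the cases ${\rm supp}(\mu)\subset[\alpha,+\infty)$ or $[0,\beta]$ amount to deleting either $(0,1/\beta)$ or $(1/\alpha,+\infty)$ from the discussion, with no substantive change in the argument.
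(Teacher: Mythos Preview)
Your approach is essentially the paper's: direct sign analysis of the integrand for (1) and (2), real-valuedness of $u_\mu$ on the two exterior intervals to localize $\rho_\mu$ for (3), and differentiation of the Nevanlinna representation for (4). Your displayed formula
\[
v_{\mu}''(z)=e^{z}\int_{[1/\beta,\,1/\alpha]}\frac{(1+s^{2})(s+e^{z})}{(s-e^{z})^{3}}\,d\rho_{\mu}(s)
\]
is in fact the correct one (the paper's printed integrand $(2+s)e^{z}$ is an arithmetic slip), and the ensuing sign discussion is identical.

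Two small points deserve care. In (2), your bound $\alpha\le\eta_{\mu}(t)/t\le\beta$ does \emph{not} pin down the sign of $u_{\mu}(t)=\log(\eta_{\mu}(t)/t)$, since nothing in the hypotheses places $1$ outside $[\alpha,\beta]$; neither the statement's ``$u_{\mu}(t)<0$'' nor the paper's ``$u_{\mu}(t)>0$'' follows in general. What your estimate does give---and what is all that (3) requires---is $\eta_{\mu}(t)/t>0$, hence $u_{\mu}$ real-valued there. In (4), your computation yields $v_{\mu}''<0$ on $(-\log\alpha,+\infty)$ and $v_{\mu}''>0$ on $(-\infty,-\log\beta)$, i.e.\ concave on the first interval and convex on the second; this is the opposite of the labeling printed in the lemma, and the paper's own computation agrees with yours, so the discrepancy is a typo in the statement. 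Neither issue affects the application in Theorem~\ref{thm:connected support R+}, which only uses the monotonicity of $v_{\mu}'$ on each half-line.
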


\begin{proof}
To prove (1), we note that for $t>1/\alpha$,
\[
\frac{ts}{1-ts}<-1,\quad s\in[\alpha,\beta],
\]
integration in $s$ yields $\psi_{\mu}(t)<-1$, and hence $\eta_{\mu}(t)>0$.
For $t<1/\alpha$,
\[
\frac{ts}{1-ts}>0,\quad s\in[\alpha,\beta],
\]
so $\psi_{\mu}(t),\eta_{\mu}(t)$, and $u_{\mu}(t)$ are all positive.
This verifies (2). 

Statements (1) and (2) imply that $u$ is real-valued on $(-\infty,1/\beta)\cup(1/\alpha,+\infty)$,
and (3) follows via Stieltjes inversion. Finally, a straightforward
calculation yields
\[
v_{\mu}''(z)=2\int_{1/\beta}^{1/\alpha}\frac{(2+s)e^{z}}{(s-e^{z})^{3}}\,d\rho_{\mu}(s),\quad z\notin[1/\beta,1/\alpha].
\]
This expression is negative when $e^{z}\in(1/\alpha,+\infty)$ and
positve when $e^{z}\in(0,1/\beta)$, thus proving (4).
\end{proof}
We can now prove an analog of Theorem \ref{thm:connected supports}.
\begin{thm}
\label{thm:connected support R+}Let $\mu_{1}$ and $\mu_{2}$ be
two probability measures on $\mathbb{R}_{+},$and set $\mu=\mu_{1}\boxtimes\mu_{2}$.
If ${\rm supp(\mu_{1})}$ and ${\rm supp}(\mu_{2})$ are connected,
then ${\rm supp}(\mu)$ is connected as well. If either ${\rm supp(}\mu_{1})$
or ${\rm supp}(\mu_{2})$ contains $0$ \emph{(}respectively, is unbounded\emph{)}
then ${\rm supp}(\mu)$ contains $0$ \emph{(}respectively, is unbounded\emph{).}
\end{thm}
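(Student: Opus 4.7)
The plan is to mirror the strategy of Theorem \ref{thm:connected supports}, now in the multiplicative setting, using the parametrization of $C_\mu$ by $D_{\mu_1,\mu_2}$ from Theorem \ref{thm:complement of supp multiplicative R} together with the convexity information in Lemma \ref{lem:u_mu when supp connected}. If one of the $\mu_j$ is a point mass, then $\mu$ is a dilate of the other measure or $\delta_0$ and all three assertions are immediate, so I may assume both measures are nondegenerate. Writing $\mathrm{supp}(\mu_j)=[\alpha_j,\beta_j]$ with $0\le\alpha_j<\beta_j\le+\infty$, Lemma \ref{lem:u_mu when supp connected}(1)--(2) and its extensions to the unbounded cases show that
\[
C_{\mu_j}=C^-_{\mu_j}\sqcup C^+_{\mu_j},\quad C^-_{\mu_j}=(0,1/\beta_j),\ C^+_{\mu_j}=(1/\alpha_j,+\infty),
\]
with $\psi_{\mu_j}>0$ on $C^-_{\mu_j}$ and $\psi_{\mu_j}<-1$ on $C^+_{\mu_j}$ (each factor possibly empty). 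Since the defining relation of $D_{\mu_1,\mu_2}$ forces $\psi_{\mu_1}(t_1)=\psi_{\mu_2}(t_2)$, signs must match and
\[
D_{\mu_1,\mu_2}\subset (C^-_{\mu_1}\times C^-_{\mu_2})\cup(C^+_{\mu_1}\times C^+_{\mu_2}).
\]

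The heart of the argument is to show that each piece $D^\pm:=D_{\mu_1,\mu_2}\cap(C^\pm_{\mu_1}\times C^\pm_{\mu_2})$ is connected. Pass to logarithmic coordinates $z_j=\log t_j$ and set $w_j(z)=v_j(z)+z=\log\eta_{\mu_j}(e^z)$, with $v_j(z)=u_{\mu_j}(e^z)$. Since $\psi_{\mu_j}$, and therefore $\eta_{\mu_j}$, is strictly monotone on each of $C^\pm_{\mu_j}$, each $w_j$ is strictly increasing, so the relation $w_1(z_1)=w_2(z_2)$ presents $z_2$ as a strictly increasing function of $z_1$ along the matching curve. A short computation with Lemma \ref{lem:derivative of k} and the subordination identity $t_1t_2=t\eta_\mu(t)$ rewrites the Denjoy--Wolff inequality built into $D_{\mu_1,\mu_2}$ as $v_1'(z_1)v_2'(z_2)<1$. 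By Lemma \ref{lem:u_mu when supp connected}(4), $v_j$ is convex on its $C^-$-range and concave on its $C^+$-range, hence $v_j'$ is monotone on each piece; combined with the monotone dependence $z_2=z_2(z_1)$, this makes the product $v_1'(z_1)v_2'(z_2(z_1))$ monotone in $z_1$, and the sublevel set $\{v_1'v_2'<1\}$ is therefore an open interval. The Nevanlinna representation of $u_{\mu_j}$ gives $v_j'(z)\to 0$ as $z\to-\infty$ on $C^-$ and as $z\to+\infty$ on $C^+$, so this interval is nonempty whenever the relevant $C^\pm_{\mu_j}$ are nonempty. Each $D^\pm$ is thus connected (possibly empty) and $D_{\mu_1,\mu_2}$, together with its homeomorphic image $C_\mu$, has at most two connected components.

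To deduce connectedness of $\mathrm{supp}(\mu)$, I combine this bound with a combinatorial observation valid for any nondegenerate $\nu$ on $\mathbb{R}_+$: $\psi_\nu$ is strictly increasing wherever defined, and on a \emph{bounded} gap $(a,b)$ of $(0,+\infty)\setminus\mathrm{supp}(\nu)$ the corresponding $t$-interval $(1/b,1/a)$ hosts a strictly increasing $\psi_\nu$ that traverses $(-\infty,+\infty)$, the infinities arising from the one-sided Cauchy-transform singularities at the gap endpoints; such a gap therefore contributes exactly three intervals to $C_\nu$ (separated by the unique zero and the unique $-1$-level of $\psi_\nu$), while each unbounded gap contributes exactly one. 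If $\mathrm{supp}(\mu)$ had $n\ge 2$ connected components, the one guaranteed bounded internal gap alone would already produce three components of $C_\mu$, contradicting the bound. Hence $\mathrm{supp}(\mu)$ is connected. The remaining assertions then follow at once: if $0\in\mathrm{supp}(\mu_j)$ for some $j$, then $\alpha_j=0$, so $C^+_{\mu_j}=\varnothing$, hence $D^+=\varnothing$, and therefore (via the homeomorphism, which preserves the sign of $\psi$) the $(1/\alpha,+\infty)$-component of $C_\mu$ is empty; with $\mathrm{supp}(\mu)$ connected this forces $\inf\mathrm{supp}(\mu)=0$. The unbounded case is the symmetric statement, exchanging the roles of $C^+$ and $C^-$.

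The only real technical point is the rewriting of the Denjoy--Wolff condition $\varphi_t'(t_1)<1$ as the separated inequality $v_1'(z_1)v_2'(z_2)<1$; once this is in place, Lemma \ref{lem:u_mu when supp connected}(4) plays exactly the role that the monotonicity of $F_{\mu_j}'$ played in the additive proof.
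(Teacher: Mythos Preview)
Your approach matches the paper's: pass to logarithmic coordinates, rewrite the Denjoy--Wolff inequality as $v_{\mu_1}'(z_1)\,v_{\mu_2}'(z_2)<1$, and invoke Lemma~\ref{lem:u_mu when supp connected}(4) to see that this product is monotone along each of the two matching curves, whence $D_{\mu_1,\mu_2}$ and $C_\mu$ have at most two components. (One harmless slip: you have convex and concave interchanged---$v_{\mu_j}$ is \emph{concave} on the $C^-$-range $(-\infty,-\log\beta_j)$ and \emph{convex} on the $C^+$-range---but only the monotonicity of $v_{\mu_j}'$ is actually used, so the argument survives.)

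The genuine gap is in your closing combinatorial step. The assertion that $\psi_\nu$ traverses all of $(-\infty,+\infty)$ across any bounded gap of $\mathrm{supp}(\nu)$ is false in general: the Cauchy transform $G_\nu$ may remain finite at a support endpoint (for instance when the density vanishes like a square root there), so $\psi_\nu$ can have finite one-sided limits and a bounded gap may contribute fewer than three intervals to $C_\nu$. Fortunately this detour is unnecessary. The homeomorphism $C_\mu\to D_{\mu_1,\mu_2}$ already forces $\psi_\mu(t)=\psi_{\mu_j}(\omega_j(t))\in(-\infty,-1)\cup(0,+\infty)$ for every $t\in C_\mu$; since $\psi_\mu$ is continuous and strictly increasing on each component of $R_\mu:=\{t>0:1/t\notin\mathrm{supp}(\mu)\}$, it follows that the values $0$ and $-1$ are never attained on $R_\mu$, so in fact $C_\mu=R_\mu$. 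Hence $R_\mu$ itself has at most two components, and the connectedness of $\mathrm{supp}(\mu)$ follows directly. The paper's one-line conclusion ``its complement has at most two connected components'' is exactly this observation.
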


\begin{proof}
If one of the measures $\mu_{j}$ is degenerate, the result is trivial.
Therefore we may, and do, assume that the measures are nondegenerate.
In this case, the set
\[
E_{\mu_{1},\mu_{2}}=\{(t_{1},t_{2})\in(0,+\infty):\psi_{\mu_{1}}(t_{1})=\psi_{\mu_{2}}(t_{2})\}
\]
is the union of (at most) two connected curves, on one of which $\psi_{\mu_{1}}(t_{1})$
is positive, and on the other $\psi_{\mu_{1}}(t_{1})<-1$. This follows
from the fact that $\psi_{\mu_{j}}$ is increasing on every interval
disjoint from ${\rm supp}(\mu_{j})$. The set $E_{\mu_{1},\mu_{2}}$
is the graph of the increasing function $h=\psi_{\mu_{2}}^{-1}\circ\psi_{\mu_{1}}$
defined on the union of two intervals $J_{1}\subset(-\infty,-1)$
and $J_{2}\subset(0,+\infty)$. Thus, it suffices to show that the
intersection of each of these curves with the set $D_{\mu_{1},\mu_{2}}$
(defined in Theorem \ref{thm:complement of supp multiplicative R})
is connected. To do this, we observe that for $t>0,$ the equation
$\omega_{1}(t)=\varphi_{t}(\omega_{1}(t))$ can be written as
\[
\log(\omega_{1}(t))=\log t+v_{\mu_{2}}(\log t+v_{\mu_{1}}(\log(\omega_{1}(t))),
\]
where $v_{\mu_{j}}$ are as in Lemma \ref{lem:u_mu when supp connected}.
In other words, $\log(\omega_{1}(t))$ is the Denjoy-Wolff point of
the analytic selfmap
\[
\psi_{t}(z)=\log t+v_{\mu_{2}}(\log t+v_{\mu_{1}}(z))
\]
of the strip $\mathbb{S}=\{x+iy:0<y<\pi\}$. Moreover, the (Carath\'eodory-Julia)
derivative of $\psi_{t}$ at the point $\log(\omega_{1}(t))$ is 
\begin{align*}
\psi_{t}'(\log(\omega_{1}(t)) & =v_{\mu_{2}}'(\log t+v_{\mu_{1}}(\log(\omega_{1}(t)))v_{\mu_{1}}'(\log(\omega_{\mu_{1}}(t))\\
 & =v_{\mu_{2}}'(\log(\omega_{2}(t)))v_{\mu_{1}}'(\log(\omega_{\mu_{1}}(t)),
\end{align*}
and this quantity is less than one when $t\in C_{\mu}$. More generally,
if $(t_{1},t_{2})\in E_{\mu_{1},\mu_{2}}$ and we set $t=t_{1}t_{2}/\psi_{\mu_{1}}(t_{1})$,
it is easy to verify that $\log(t_{1})$ is a fixed point for $\psi_{t}$,
and
\[
\psi_{t}'(\log(t_{1}))=v_{\mu_{2}}'(\log(t_{2}))v_{\mu_{1}}'(\log(t_{1}))=v_{\mu_{2}}'(\log(h(t_{1})))v_{\mu_{1}}'(\log(t_{1})),\quad t_{1}\in J_{1}\cup J_{2}.
\]
Lemma \ref{lem:u_mu when supp connected}(4) shows that the above
expression is a monotone function of $t_{1}$ on each of the two components
of $E_{\mu_{1},\mu_{2}}$. In particular, the inequality $\psi_{t}'(\log(t_{1}))<1$
is satisfied in a connected subset of each of the intervals $J_{1}$
and $J_{2}$. The connectedness of ${\rm supp}(\mu)$ follows because
its complement has at most two connected components. If either $0\in{\rm supp}(\mu_{1})$
or $\sup{\rm supp(\mu_{1})=+\infty}$, then $E_{\mu_{1},\mu_{2}}$
consists of one connected component (depending on the sign of $\psi_{\mu_{1}}$
on the remaining---if any---component of $\mathbb{R}_{+}\backslash{\rm supp}(\mu_{1})$),
and the concluding statement of the theorem is easily derived from
this fact.
\end{proof}

\section{\label{sec:multiplicative-T}Multiplicative convolution on $\mathbb{T}$ }

The free multiplicative convolution of probability measures on the
unit circle is calculated using the same analytical apparatus as that
in Section \ref{sec:mult-on-R}, but the domain of analyticity is
the unit disk $\mathbb{D}$. Suppose that $\mu$ is such a measure.
Then we consider the analytic functions
\[
\psi_{\mu}(z)=\int_{\mathbb{T}}\frac{zt}{1-zt}\,d\mu(t),\quad z\in\mathbb{D},
\]
and
\[
\eta(z)=\frac{\psi_{\mu}(z)}{1+\psi_{\mu}(z)},\quad z\in\mathbb{D}.
\]
Since $\Re(\lambda/(1-\lambda))>-1/2$ for $\lambda\in\mathbb{D},$we
have $\Re\psi_{\mu}(z)>1/2$ for every $z\in\mathbb{D}$, and therefore
$\eta_{\mu}$ maps $\mathbb{D}$ to itself. Obviously, $\psi_{\mu}(0)=\eta_{\mu}(0)=0.$
The functions $\psi_{\mu}$ and $\eta_{\mu}$ continue analytically
to any point $z\in\mathbb{T}$ such that $1/z\notin{\rm supp}(\mu)$
and, moreover,
\[
\Re\psi_{\mu}(z)=-\frac{1}{2}\text{ and }|\eta_{\mu}(z)|=1,\quad z\in\mathbb{T},\frac{1}{z}\notin{\rm supp}(\mu).
\]

The free convolution of two measures $\mu_{1},\mu_{2}$ on $\mathbb{T}$,
identified as the distributions of {*}-free unitary operators $x_{1},x_{2}$
in a tracial $W^{*}$-probability space $(\mathcal{A},\tau)$, is
the distribution of $x_{1}x_{2}$. There are again subordination functions,
analogous to those in Theorems \ref{thm:boxplus subordination} and
\ref{thm:LSubordination-half-line}.
\begin{thm}
\label{thm:LSubordination--circle}Let $\mu_{1}$ and $\mu_{2}$ be
Borel probability measures on $\mathbb{T}$, neither of which is a
point mass, and set $\mu=\mu_{1}\boxtimes\mu_{2}$. There exist unique
continuous functions $\omega_{1},\omega_{2}:\mathbb{D}\cup\mathbb{T}\to\mathbb{D}\cup\mathbb{T}$
such that\emph{:}
\begin{enumerate}
\item Each of the restrictions $\omega_{j}|\mathbb{D}$ is analytic and
$\omega_{j}(0)=0$.
\item $\eta_{\mu}(z)=\eta_{\mu_{1}}(\omega_{1}(z))=\eta_{\mu_{2}}(\omega_{2}(z))=\omega_{1}(z)\omega_{2}(z)/z$
for $z\in\mathbb{D}.$
\item Set $k_{j}(z)=\eta_{\mu_{j}}(z)/z$. Then, for every $t\in\mathbb{D}\cup\mathbb{T}$,
$\omega_{1}(t)$ is the Denjoy-Wolff point of the map $\varphi_{t}(z)=tk_{2}(tk_{1}(z)),z\in\mathbb{D},$
and $\omega_{2}(t)$ is the Denjoy-Wolff point of the map $z\mapsto tk_{1}(tk_{2}(z)),z\in\mathbb{D}$.
\end{enumerate}
\end{thm}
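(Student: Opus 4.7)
The plan is to mimic the arguments underlying Theorems \ref{thm:boxplus subordination} and \ref{thm:LSubordination-half-line}, adapted to the fact that the domain of analyticity is now $\mathbb{D}$, which is simply connected and bounded, so that classical Denjoy-Wolff theory applies directly. The first step is to observe that the maps $k_j(z)=\eta_{\mu_j}(z)/z$ satisfy $|k_j(z)|<1$ for every $z\in\mathbb{D}$. Indeed, $\eta_{\mu_j}$ is an analytic self-map of $\mathbb{D}$ with $\eta_{\mu_j}(0)=0$, so the Schwarz lemma gives $|k_j(z)|\le1$ on $\mathbb{D}$; the case of equality would force $\eta_{\mu_j}$ to be a rotation and $\mu_j$ to be a point mass, which is excluded. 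Consequently, for every $t\in\overline{\mathbb{D}}$, $\varphi_t(z)=tk_2(tk_1(z))$ is an analytic self-map of $\mathbb{D}$, and $\varphi_t(\mathbb{D})$ is relatively compact in $\mathbb{D}$ whenever $t\in\mathbb{D}$.

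For $t\in\mathbb{D}$ the Denjoy-Wolff theorem yields a unique attracting fixed point $\omega_1(t)\in\mathbb{D}$ of $\varphi_t$, and $\omega_1$ is analytic in $t$ by the implicit function theorem (since $|\partial_z\varphi_t(\omega_1(t))|<1$). Setting $\omega_2(t)=tk_1(\omega_1(t))$, the fixed point equation $\omega_1(t)=tk_2(\omega_2(t))$ yields $\omega_1(t)\omega_2(t)=t\eta_{\mu_1}(\omega_1(t))=t\eta_{\mu_2}(\omega_2(t))$, and the equality $\varphi_0\equiv 0$ gives $\omega_j(0)=0$. The identity $\eta_\mu(z)=\omega_1(z)\omega_2(z)/z$ is then established exactly as in the $\mathbb{R}_+$ setting: realize $\mu_j$ as the distribution of $*$-free unitaries $x_j\in(\mathcal{A},\tau)$, compute $\psi_\mu(z)=\tau((1_{\mathcal{A}}-zx_1x_2)^{-1})-1$ and the corresponding $\psi_{\mu_j}(\omega_j(z))$, and verify via freeness (or equivalently via Voiculescu's multiplicativity $S_{\mu_1\boxtimes\mu_2}=S_{\mu_1}S_{\mu_2}$) that the two power series agree on a neighborhood of the origin, hence throughout $\mathbb{D}$. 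Uniqueness of $(\omega_1,\omega_2)$ among analytic self-maps of $\mathbb{D}$ fixing $0$ and satisfying (2) reduces to the uniqueness of the attracting fixed point of $\varphi_t$.

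The remaining, and most delicate, step is the continuous extension to $\overline{\mathbb{D}}$ together with the Denjoy-Wolff characterization for $t\in\mathbb{T}$. Since $\omega_1$ is a bounded analytic function on $\mathbb{D}$, it has nontangential boundary values almost everywhere; one defines $\omega_1(t)$ at $t\in\mathbb{T}$ as the Denjoy-Wolff point of the self-map $\varphi_t$ of $\mathbb{D}$ (which is well-defined as either an interior fixed point or a Julia-type boundary fixed point in $\mathbb{T}$). To show that this definition is consistent and yields a continuous function on $\overline{\mathbb{D}}$, one argues that the family $\{\varphi_t\}_{t\in\overline{\mathbb{D}}}$ varies continuously in the topology of locally uniform convergence on $\mathbb{D}$, and that the Denjoy-Wolff point is a continuous function of a holomorphic self-map within this topology (compactness of $\overline{\mathbb{D}}$ plus a Julia-Carath\'eodory estimate at boundary fixed points). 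A crucial auxiliary observation, which has no real counterpart in the additive theorem, is that $\varphi_t$ is \emph{never} the identity map for $t\in\overline{\mathbb{D}}$: if it were, $k_1$ and $k_2$ would both have to be unimodular constants, contradicting the hypothesis that neither $\mu_j$ is a point mass. This rules out the exceptional case present in Theorem \ref{thm:boxplus subordination}(4) and ensures that the Denjoy-Wolff point of $\varphi_t$ is unambiguously defined for every $t\in\overline{\mathbb{D}}$.

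The main obstacle will be the boundary regularity in the previous paragraph, specifically verifying continuity at points $t\in\mathbb{T}$ where $1/t$ lies in $\mathrm{supp}(\mu_1)\cup\mathrm{supp}(\mu_2)$. At such points $k_1$ or $k_2$ need not extend continuously, so one cannot simply pass to the limit in the fixed-point equation. The cleanest route is to combine a normal families argument (any sequence $t_n\to t$ admits a subsequence along which $\omega_1(t_n)$ converges to some $w\in\overline{\mathbb{D}}$) with a rigidity argument identifying every such $w$ as the Denjoy-Wolff point of $\varphi_t$; the hyperbolic contraction of $\varphi_t$ on $\mathbb{D}$, inherited from the strict inequality $|k_j|<1$, is what forces uniqueness of the subsequential limit and hence continuity.
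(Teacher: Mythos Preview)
The paper does not prove Theorem~\ref{thm:LSubordination--circle}: it is stated as a background result, parallel to Theorems~\ref{thm:boxplus subordination} and~\ref{thm:LSubordination-half-line}, with the relevant arguments deferred to the references (notably \cite{bb-semigroups,serb-thesis} for the analytic subordination on $\mathbb{D}$ and \cite{BBH} for the continuous extension to $\overline{\mathbb{D}}$ via Denjoy--Wolff points). So there is no in-paper proof to compare against.

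Your sketch is a reasonable outline of how those references proceed, and the reduction to the Schwarz lemma (strict inequality $|k_j|<1$ because $\mu_j$ is not a point mass) together with the observation that $\varphi_t$ is never the identity is exactly the right starting point; this parallels Remark~\ref{rem:the map varphi NOT the identity (R)}. One caution, however: the sentence ``the Denjoy--Wolff point is a continuous function of a holomorphic self-map within this topology'' is \emph{not} a general theorem. For arbitrary analytic self-maps of $\mathbb{D}$ varying continuously in the compact-open topology, the Denjoy--Wolff point can jump (an interior fixed point can drift to the boundary, or a boundary Denjoy--Wolff point can switch location discontinuously). The content of \cite{BBH} is precisely that, for the specific one-parameter families $t\mapsto\varphi_t$ arising from subordination, such pathologies do not occur; the proof uses structural features of $\varphi_t$ (in particular, that $\omega_1(t)$ for $t\in\mathbb{D}$ is already known to be an analytic function given by a fixed-point equation, and one controls its radial/nontangential limits). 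Your final paragraph correctly flags this as the crux, but the normal-families-plus-rigidity argument you propose needs more than ``hyperbolic contraction inherited from $|k_j|<1$'': that contraction is not uniform near $\partial\mathbb{D}$, so it does not by itself pin down the subsequential limit. If you want a self-contained argument, you should expect to reproduce a version of the analysis in \cite{BBH} rather than invoke a general continuity principle.
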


An analog of the following result was used in the proof of Theorem
\ref{thm:complement of supp multiplicative R}.
\begin{lem}
\label{lemma: variance circle}Suppose that $x\in\mathcal{A}$ is
a unitary operator with distribution $\mu$. Then
\[
\|(1-zx)^{-1}-\tau((1-zx)^{-1})\|_{2}^{2}=|\tau((1-zx)^{-2})-\tau((1-zx)^{-1})^{2}|
\]
for every $z\in\mathbb{T}$ such that $1/z\notin{\rm supp}(\mu)$
.
\end{lem}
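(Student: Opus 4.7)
The plan is to expand the left-hand side as $\tau(y^{*}y) - |\tau(y)|^{2}$, where $y = (1_{\mathcal{A}} - zx)^{-1}$, pass to integrals against $\mu$ via the spectral theorem for the unitary $x$, and then match the two sides using a single pointwise identity valid on $\mathbb{T}$.

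First, I would note that the hypothesis $1/z \notin \operatorname{supp}(\mu)$ makes $y$ bounded, and that the functional calculus for the unitary $x$ lets me express everything in sight as integrals of rational functions against $\mu$. Writing $m = \tau(y)$ and $M = \tau(y^{2})$, one has
\[
m = \int_{\mathbb{T}} \frac{d\mu(t)}{1-zt}, \qquad M = \int_{\mathbb{T}} \frac{d\mu(t)}{(1-zt)^{2}}, \qquad \tau(y^{*}y) = \int_{\mathbb{T}} \frac{d\mu(t)}{|1-zt|^{2}}.
\]

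The pivotal algebraic fact I would use is that for $w \in \mathbb{T} \setminus \{1\}$ one has $\bar w = 1/w$, and a direct computation yields
\[
\frac{1}{1-w} + \frac{1}{1-\bar w} = 1, \qquad \frac{1}{|1-w|^{2}} = \frac{1}{1-w} - \frac{1}{(1-w)^{2}}.
\]
Applied with $w = zt$ (legitimate for all $t \in \operatorname{supp}(\mu)$) and integrated against $\mu$, these give simultaneously $\bar m = 1 - m$ and $\tau(y^{*}y) = m - M$. In particular $|m|^{2} = m\bar m = m - m^{2}$.

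Assembling everything, $\|y - m\,1_{\mathcal{A}}\|_{2}^{2} = \tau(y^{*}y) - |m|^{2} = (m - M) - (m - m^{2}) = m^{2} - M$. Since the left-hand side is manifestly a non-negative real number, so is $m^{2} - M$, and therefore it coincides with $|M - m^{2}|$, which is the claim. I do not expect any real obstacle: the one step where the unitarity of $x$ (equivalently, $\operatorname{supp}(\mu) \subset \mathbb{T}$) enters essentially is the pointwise identity $1/|1-w|^{2} = 1/(1-w) - 1/(1-w)^{2}$, and that is a one-line consequence of $\bar w = 1/w$ on $\mathbb{T}$; everything else is routine trace manipulation.
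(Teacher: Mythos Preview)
Your proof is correct and rests on the same observation as the paper's: for $w\in\mathbb{T}\setminus\{1\}$ one has $\overline{1/(1-w)}=1-1/(1-w)$, i.e.\ $\Re\frac{1}{1-w}=\tfrac12$. The paper phrases this as ``the integrand $\frac{1}{1-zt}-m$ is purely imaginary'' and replaces $|f-m|^{2}$ by $-(f-m)^{2}$ before expanding, whereas you expand $\|y-m\|_{2}^{2}=\tau(y^{*}y)-|m|^{2}$ and evaluate each piece via the pointwise identities $\frac{1}{|1-w|^{2}}=\frac{1}{1-w}-\frac{1}{(1-w)^{2}}$ and $\bar m=1-m$; these are two packagings of the same computation.
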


\begin{proof}
We have
\[
\Re\frac{1}{1-tz}=\Re(\psi_{\mu}(z)+1)=\frac{1}{2},\quad t\in{\rm supp}(\mu).
\]
 Now, 
\[
\|(1-zx)^{-1}-\tau((1-zx)^{-1})\|_{2}^{2}=\int_{\mathbb{T}}\left|\frac{1}{1-zt}-\int_{\mathbb{T}}\frac{1}{1-zt}\,d\mu(t)\right|^{2}d\mu(t),
\]
and the integrand is purely imaginary. Thus
\[
\|(1-zx)^{-1}-\tau((1-zx)^{-1})\|_{2}^{2}=-\int_{\mathbb{T}}\left[\frac{1}{1-zt}-\int_{\mathbb{T}}\frac{1}{1-zt}\,d\mu(t)\right]^{2}d\mu(t),
\]
which leads immediately to the identity in the statement.
\end{proof}
Another ingredient in the proof of Theorem \ref{thm:complement of supp multiplicative R}
is the fact that
\begin{align*}
\left|\frac{1}{(\psi_{\mu}(\lambda)+1)\psi_{\mu}(\lambda)}\right|\|(1_{\mathcal{A}}-\omega_{1}(\lambda)x_{1})^{-1}-(\psi_{\mu}(\lambda)+1)1_{\mathcal{A}}\|_{2}\\
\times\|(1_{\mathcal{A}}-\omega_{1}(\lambda)x_{1})^{-1}-(\psi_{\mu}(\lambda)+1)1_{\mathcal{A}}\|_{2}<1
\end{align*}
for some $\lambda$. When $x_{1}$ and $x_{2}$ are unitary operators,
this inequality is satisfied for $\lambda=0$. Thus, all ingredients
necessary for the proof of the following result are in place since
Lemmas \ref{lem:derivative of k} and \ref{lem:product of derivatives alpha eta etc}
were proved for arbitrary elements in $\widetilde{\mathcal{A}}.$
The reader will have no difficulty in producing the necessary details.
\begin{thm}
\label{thm:complement of supp multiplicative T}Let $\mu_{1}$ and
$\mu_{2}$ be nondegenerate Borel probability measures on $\mathbb{T}$,
and set $\mu=\mu_{1}\boxtimes\mu_{2}.$ If $t\in\mathbb{T}$ and $1/t\notin{\rm supp}(\mu)$,
denote by
\[
V_{\mu}(t)=\left|\int_{\mathbb{R}_{+}}\frac{d\mu(s)}{(1-ts)^{2}}-\left[\int_{\mathbb{R}_{+}}\frac{d\mu(s)}{1-ts}\right]^{2}\right|
\]
the variance of the function $s\mapsto1/(1-ts)$ in $L^{2}(\mu)$.
Define the following sets\emph{:
\begin{align*}
C_{\mu} & =\{t\in\mathbb{T}:1/t\notin{\rm supp}(\mu)\},\\
D_{\mu_{1,}\mu_{2}} & =\{(t_{1},t_{2})\in C_{\mu_{1}}\times C_{\mu_{2}}:\psi_{\mu_{1}}(t_{1})=\psi_{\mu_{2}}(t_{2}),\text{\emph{and}}\\
 & V_{\mu_{1}}(t_{1})V_{\mu_{2}}(t_{2})/|\psi_{\mu_{1}}(t_{1})(\psi_{\mu_{1}}(t_{1})+1)|^{2}<1\}.
\end{align*}
}Then the function $t\mapsto(\omega_{1}(t),\omega_{2}(t))$ maps $C_{\mu}$
homeomorphically onto $D_{\mu_{1},\mu_{2}}$, and the inverse function
is given by
\[
(t_{1},t_{2})\mapsto\frac{t_{1}t_{2}}{\eta_{\mu_{1}}(t_{1})}=\frac{t_{1}t_{2}}{\eta_{\mu_{2}}(t_{2})}.
\]
 
\end{thm}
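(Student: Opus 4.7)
My plan is to mirror the proof of Theorem~\ref{thm:complement of supp multiplicative R}, with variance-of-resolvent identities now supplied by Lemma~\ref{lemma: variance circle} and the connected set used for Corollary~\ref{cor: clopen} chosen inside $\mathbb{D}\cup\mathbb{T}$ rather than $\mathbb{H}\cup(-\infty,0)\cup\{t\}$. The algebraic ingredients (Lemma~\ref{lem:multiplicative Haagerup} together with Lemmas~\ref{lem:derivative of k} and \ref{lem:product of derivatives alpha eta etc}) hold verbatim, being stated for arbitrary elements of $\widetilde{\mathcal{A}}$. Throughout I fix $\ast$-free unitaries $x_{1},x_{2}\in\mathcal{A}$ with distributions $\mu_{1}$ and $\mu_{2}$.

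For the forward inclusion, I take $(t_{1},t_{2})\in D_{\mu_{1},\mu_{2}}$ and set $t=t_{1}t_{2}/\eta_{\mu_{1}}(t_{1})$; the identity $\psi_{\mu_{1}}(t_{1})=\psi_{\mu_{2}}(t_{2})$ forces $\eta_{\mu_{1}}(t_{1})=\eta_{\mu_{2}}(t_{2})$, and the fact that $|\eta_{\mu_{j}}(t_{j})|=1$ gives $|t|=1$. The verification that $\varphi_{t}(t_{1})=t_{1}$ is the same one-line computation as in the $\mathbb{R}_{+}$ proof. Writing $\alpha=\psi_{\mu_{1}}(t_{1})$, Lemmas~\ref{lem:derivative of k}, \ref{lem:product of derivatives alpha eta etc}, and \ref{lemma: variance circle} give
\[
|\varphi_{t}'(t_{1})|=\frac{V_{\mu_{1}}(t_{1})V_{\mu_{2}}(t_{2})}{|\alpha(\alpha+1)|^{2}}<1,
\]
and since $\varphi_{t}$ extends analytically across $t_{1}\in\mathbb{T}$ as a local self-map of $\mathbb{D}$ fixing $t_{1}$, its derivative at $t_{1}$ is a positive real, so $t_{1}$ is the Denjoy-Wolff point of $\varphi_{t}$; Theorem~\ref{thm:LSubordination--circle}(3) then gives $\omega_{1}(t)=t_{1}$, and symmetrically $\omega_{2}(t)=t_{2}$. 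To confirm $1/t\notin{\rm supp}(\mu)$, I apply Lemma~\ref{lem:multiplicative Haagerup} with $y_{j}=t_{j}x_{j}$ and $\beta=\alpha+1$: since $(\beta/(\beta-1))y_{1}y_{2}=tx_{1}x_{2}$, invertibility of $1_{\mathcal{A}}-tx_{1}x_{2}$ is equivalent to that of a centered $\ast$-free product $1_{\mathcal{A}}-u_{1}u_{2}$, and the displayed inequality yields this invertibility via the spectral-radius observation preceding Lemma~\ref{lem:Lehner}.

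For the reverse inclusion, I fix $t\in C_{\mu}$ and note that $|\eta_{\mu}(t)|=1$ combined with $\eta_{\mu_{j}}(\mathbb{D})\subset\mathbb{D}$ and the Schwarz lemma force $\omega_{j}(t)\in\mathbb{T}$ with $1/\omega_{j}(t)\notin{\rm supp}(\mu_{j})$, so $\omega_{j}(t)\in C_{\mu_{j}}$ and $\psi_{\mu_{j}}(\omega_{j}(t))=\psi_{\mu}(t)$. I then introduce the $\ast$-free, traceless family
\[
u_{1}(\lambda)=\frac{(1_{\mathcal{A}}-\omega_{1}(\lambda)x_{1})^{-1}-(\psi_{\mu}(\lambda)+1)1_{\mathcal{A}}}{\psi_{\mu}(\lambda)+1},\ \ u_{2}(\lambda)=\frac{(1_{\mathcal{A}}-\omega_{2}(\lambda)x_{2})^{-1}-(\psi_{\mu}(\lambda)+1)1_{\mathcal{A}}}{\psi_{\mu}(\lambda)},
\]
continuous on the connected set $S=(\mathbb{D}\setminus Z)\cup\{t\}$, where $Z\subset\mathbb{D}$ denotes the discrete zero set of $\psi_{\mu}$. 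The identity $\omega_{1}(\lambda)\omega_{2}(\lambda)=\lambda\eta_{\mu}(\lambda)$ and Lemma~\ref{lem:multiplicative Haagerup} show that the invertibility of $1_{\mathcal{A}}-u_{1}(\lambda)u_{2}(\lambda)$ on $S$ is equivalent to that of $1_{\mathcal{A}}-\lambda x_{1}x_{2}$, which holds on $\mathbb{D}$ by Neumann series and at $\lambda=t$ by assumption. The algebraic identity at the heart of Lemma~\ref{lem:multiplicative Haagerup} factors $1_{\mathcal{A}}-u_{1}(\lambda)u_{2}(\lambda)$ as a product of resolvents times $(1_{\mathcal{A}}-\lambda x_{1}x_{2})/(\psi_{\mu}(\lambda)+1)$, each factor tending to $1_{\mathcal{A}}$ in norm as $\lambda\to 0$; hence $\|u_{1}(\lambda)u_{2}(\lambda)\|\to 0$, and the spectral-radius observation gives $\|u_{1}(\lambda)\|_{2}\|u_{2}(\lambda)\|_{2}<1$ near the origin. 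Corollary~\ref{cor: clopen} propagates the inequality to $\lambda=t$, and Lemma~\ref{lemma: variance circle} translates it into $V_{\mu_{1}}(\omega_{1}(t))V_{\mu_{2}}(\omega_{2}(t))/|\psi_{\mu}(t)(\psi_{\mu}(t)+1)|^{2}<1$, placing $(\omega_{1}(t),\omega_{2}(t))$ in $D_{\mu_{1},\mu_{2}}$. Continuity provided by Theorem~\ref{thm:LSubordination--circle}, together with continuity by inspection of the inverse $(t_{1},t_{2})\mapsto t_{1}t_{2}/\eta_{\mu_{j}}(t_{j})$, completes the homeomorphism claim.

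The main obstacle is the choice of $S$: on $\mathbb{D}$, the function $\psi_{\mu}$ can have isolated zeros at which the Haagerup denominator $\beta(\beta-1)=(\psi_{\mu}+1)\psi_{\mu}$ vanishes, so these must be excised; fortunately the remaining set stays connected and the seeding inequality still holds near, though not at, the origin. The contrast with the $\mathbb{R}_{+}$ case---where Corollary~\ref{cor:varphi'<1 at fixed point (R)} furnishes the seed as $\lambda\to-\infty$---is the only genuinely new feature, and every other step is an essentially verbatim transcription.
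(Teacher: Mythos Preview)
Your proof is correct and follows essentially the same route the paper sketches: transcribe the argument of Theorem~\ref{thm:complement of supp multiplicative R}, using Lemma~\ref{lemma: variance circle} in place of the positive-line variance identity and seeding the clopen argument near the origin of $\mathbb{D}$. The only point where you diverge from the paper is the treatment of the seed: the paper asserts that the key inequality ``is satisfied for $\lambda=0$,'' which strictly speaking is an indeterminate $0/0$ and must be read as a limit, whereas you make this rigorous by excising the discrete zero set $Z$ of $\psi_{\mu}$ and working on $(\mathbb{D}\setminus Z)\cup\{t\}$, showing $\|u_{1}(\lambda)u_{2}(\lambda)\|\to0$ as $\lambda\to0$ via the Haagerup factorization. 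This is a perfectly acceptable---indeed cleaner---way to realize the paper's intent, and the rest of your argument (the fixed-point/derivative computation for the forward direction, the Schwarz-reflection reasoning that $\omega_{j}(t)\in C_{\mu_{j}}$, and the invocation of Corollary~\ref{cor: clopen}) matches the paper's template exactly.
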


The map $\eta_{\mu}$ is orientation preserving on each connected
component of the set $\{t\in\mathbb{T}:1/t\notin{\rm supp}(\mu)\}$.
If ${\rm supp}(\mu_{1})$ has $n_{1}$ components and ${\rm supp}(\mu_{2})$
has $n_{2}$ components, the set
\[
E_{\mu_{1},\mu_{2}}=\{(t_{1},t_{2})\in C_{\mu_{1}}\times C_{\mu_{2}}:\psi_{\mu_{1}}(t_{1})=\psi_{\mu_{2}}(t_{2})\}
\]
has at most $n_{1}n_{2}$ components. On the other hand, ${\rm supp}(\mu)$
has as many components as the set $D_{\mu_{1},\mu_{2}}$. Thus, to
estimate the latter number, we need to understand how many components
of $D_{\mu_{1},\mu_{2}}$ are contained in each component of $E_{\mu_{1},\mu_{2}}$.
The existence of such estimates remains an open problem for now.

\end{document}